\newcommand{\fix}[1]{\textbf{\textcolor{red}{#1}}}
\newcommand{\reply}[1]{\textbf{\textcolor{blue}{#1}}}
\newcommand\be{\begin{equation}}
\newcommand\ee{\end{equation}}
\newcommand\bea{\begin{eqnarray}}
\newcommand\eea{\end{eqnarray}}
\newcommand\bi{\begin{itemize}}
\newcommand\ei{\end{itemize}}
\newcommand\ben{\begin{enumerate}}
\newcommand\een{\end{enumerate}}
\theoremstyle{plain}
\newtheorem{thm}{Theorem}[section]
\newtheorem{lemma}[thm]{Lemma}
\newtheorem{rek}[thm]{Remark}
\newtheorem{cor}[thm]{Corollary}
\newtheorem*{thm*}{Theorem}
\newtheorem{defi}[thm]{Definition}
\newtheorem{lem}[thm]{Lemma}
\newtheorem{conj}[thm]{Conjecture}
\newcommand{\twocase}[5]{#1 \begin{cases} #2 & \text{#3}\\ #4 &\text{#5} \end{cases}   }
\newcommand{\mel}{\mathcal{M}}
\newcommand{\E}{\ensuremath{\mathbb{E}}}
\newcommand{\Q}{\mathbb{Q}}
\newcommand{\N}{\mathbb{N}}
\newcommand{\ga}{\alpha}
\newcommand{\Z}{\mathbb{Z}}
\newcommand{\var}[1]{{\rm Var}\left(#1\right)}
\newcommand{\bburl}[1]{\textcolor{blue}{\url{#1}}}
\newcommand{\blue}[1]{\textcolor{blue}}
\newcommand{\ncr}[2]{{#1 \choose #2}}
\numberwithin{equation}{section}
\begin{document}

\title{Benford's Law and Continuous Dependent Random Variables}

\author[Becker et. al.]{Thealexa Becker}\email{\textcolor{blue}{\href{mailto:tbecker@smith.edu}{tbecker@smith.edu}}}
\address{Smith College, Northampton, MA 01063}

\author[]{David Burt}\email{\textcolor{blue}{\href{mailto:drb3@williams.edu}{drb3@williams.edu}}} \address{Williams College, Williamstown, MA 01267}

\author[]{Taylor C. Corcoran}\email{\textcolor{blue}{\href{mailto:taylorc3@email.arizona.edu}{taylorc3@email.arizona.edu}}}
\address{The University of Arizona, Tucson, AZ 85721}

\author[]{Alec Greaves-Tunnell}\email{\textcolor{blue}{\href{mailto:ahg1@williams.edu}{ahg1@williams.edu}}} \address{Williams College, Williamstown, MA 01267}

\author[]{Joseph R. Iafrate}\email{\textcolor{blue}{\href{mailto:Joseph.R.Iafrate@williams.edu}{Joseph.R.Iafrate@williams.edu}}}
\address{Williams College, Williamstown, MA 01267}

\author[]{Joy Jing}\email{\textcolor{blue}{\href{mailto:Joy.Jing@williams.edu}{Joy.Jing@williams.edu}}}
\address{Williams College, Williamstown, MA 01267}

\author[]{Steven J. Miller}\email{\textcolor{blue}{\href{mailto:sjm1@williams.edu}{sjm1@williams.edu}},  \textcolor{blue}{\href{Steven.Miller.MC.96@aya.yale.edu}{Steven.Miller.MC.96@aya.yale.edu}}}
\address{Department of Mathematics and Statistics, Williams College, Williamstown, MA 01267}

\author[]{Jaclyn D. Porfilio}\email{\textcolor{blue}{\href{mailto:Jaclyn.D.Porfilio@williams.edu}{Jaclyn.D.Porfilio@williams.edu}}}
\address{Williams College, Williamstown, MA 01267}

\author[]{Ryan Ronan}\email{\textcolor{blue}{\href{mailto:ryan.p.ronan@gmail.com}{ryan.p.ronan@gmail.com}}}
\address{Cooper Union, New York, NY 10003}

\author[]{Jirapat Samranvedhya}\email{\textcolor{blue}{\href{mailto:Jirapat.Samranvedhya@williams.edu}{Jirapat.Samranvedhya@williams.edu}}}
\address{Williams College, Williamstown, MA 01267}

\author[]{Frederick W. Strauch}\email{\textcolor{blue}{\href{mailto:fws1@williams.edu}{fws1@williams.edu}}}
\address{Department of Physics, Williams College, Williamstown, MA 01267}

\author[]{Blaine Talbut}\email{\textcolor{blue}{\href{mailto:blainetalbut@math.ucla.edu}{blainetalbut@math.ucla.edu}}} \address{University of California Los Angeles, Los Angeles, CA 90095}

\subjclass[2010]{60A10, 11K06  (primary),  (secondary) 60E10}

\keywords{Benford's Law, Fourier transform, Mellin transform, dependent random variables, fragmentation}

\date{\today}

\thanks{This work was supported by NSF grants DMS0850577, DMS0970067, DMS1265673, DMS1561945 and PHY1005571, a Research Corporation Cottrell College Science Award, and Williams College. We thank Don Lemons, Charles Wohl, and participants of numerous Williams SMALL REUs for helpful discussions. This paper is dedicated to the memory of Corey Manack, a wonderful colleague who was generous with his time and provided many helpful comments in the course of these investigations.}

\begin{abstract}
Many mathematical, man-made and natural systems exhibit a leading-digit bias, where a first digit (base 10) of 1 occurs not 11\% of the time, as one would expect if all digits were equally likely, but rather 30\%. This phenomenon is known as Benford's Law. Analyzing which datasets adhere to Benford's Law and how quickly Benford behavior sets in are the two most important problems in the field. Most previous work studied systems of independent random variables, and relied on the independence in their analyses.

Inspired by natural processes such as particle decay, we study the dependent random variables that emerge from models of decomposition of conserved quantities. We prove that in many instances the distribution of lengths of the resulting pieces converges to Benford behavior as the number of divisions grow, and give several conjectures for other fragmentation processes. The main difficulty is that the resulting random variables are dependent, which we handle by using tools from Fourier analysis and irrationality exponents to obtain quantified convergence rates. Our method can be applied to many other systems; as an example, we show that the $n!$ entries in the determinant expansions of $n\times n$ matrices with entries independently drawn from nice random variables converges to Benford's Law.
\end{abstract}

\maketitle

\tableofcontents

%%%%%%%%%%%%%%%%%%%%%%%%%%%%%%%%%%%%%%%%%%%%%%%%%%%%%%%%%%%%%%%%%%%%%%%%%%%%%%%%%%%%%%%%%%%%%%%%%%%%%%%%%%%%%%%%%%%%%%%%%%%%%%%%%%%%%%%%%%%%%%%%%%%%
%%%%%%%%%%%%%%%%%%%%%%%%%%%%%%%%%%%%%%%%%%%%%%%%%%%%%%%%%%%%%%%%%%%%%%%%%%%%%%%%%%%%%%%%%%%%%%%%%%%%%%%%%%%%%%%%%%%%%%%%%%%%%%%%%%%%%%%%%%%%%%%%%%%%
%					  INTRODUCTION
%%%%%%%%%%%%%%%%%%%%%%%%%%%%%%%%%%%%%%%%%%%%%%%%%%%%%%%%%%%%%%%%%%%%%%%%%%%%%%%%%%%%%%%%%%%%%%%%%%%%%%%%%%%%%%%%%%%%%%%%%%%%%%%%%%%%%%%%%%%%%%%%%%%%
%%%%%%%%%%%%%%%%%%%%%%%%%%%%%%%%%%%%%%%%%%%%%%%%%%%%%%%%%%%%%%%%%%%%%%%%%%%%%%%%%%%%%%%%%%%%%%%%%%%%%%%%%%%%%%%%%%%%%%%%%%%%%%%%%%%%%%%%%%%%%%%%%%%%
\section{Introduction}

%%%%%%%%%%%%%%%%%%%%%%%%%%%%%%%%%%%%%%%%%%%%%%%%%%%%%%%%%%%%%%%%%%%%%%%%%%
\subsection{Background}\label{sec:process}

In 1881, American astronomer Simon Newcomb \cite{New} noticed that the earlier pages of logarithm tables, those which corresponded to numbers with leading digit 1, were more worn than other pages. He proposed that the leading digits of certain systems are logarithmically, rather than uniformly, distributed. In 1938, Newcomb's leading digit phenomenon was popularized by physicist Frank Benford, who examined the distribution of leading digits in datasets ranging from street addresses to molecular weights. The digit bias investigated by these scientists is now known as Benford's Law.

Formally, a dataset is said to follow  Benford's Law base $B$ if the probability of observing a leading digit $d$ base $B$ is $\log_{B} (\frac{d+1}{d})$; thus we would have a leading digit of $1$ base 10 approximately $30\%$ of the time, and a leading digit of $9$ less than $5\%$ of the time. More generally, we can consider all the digits of a number. Specifically, given any $x > 0$ we can write it as \be x \ = \ S_B(x) \cdot 10^{k_B(x)}, \ee where $S_B(x) \in [1, B)$ is the significand of $x$ and $k_B(x)$ is an integer; note two numbers have the same leading digits if their significands agree. Benford's Law is now the statement that ${\rm Prob}(S_B(x) \le s) = \log_B(s)$.

Benford's Law arises in applied mathematics \cite{BH1}, auditing \cite{DrNi,MN3,Nig1,Nig2,Nig3,NMi}, biology \cite{CLTF}, computer science \cite{Knu}, dynamical systems \cite{Ber1,Ber2,BBH,BHKR,Rod}, economics \cite{To}, geology \cite{NM}, number theory \cite{ARS,KonMi,LS}, physics \cite{PTTV}, signal processing \cite{PHA}, statistics \cite{MN2,CLM} and voting fraud detection \cite{Meb}, to name just a few. See \cite{BH2,Hu} for extensive bibliographies and \cite{BH3,BH4,BH5, BH6, Dia,Hi1,Hi2,JKKKM,JR,MN1,Pin,Rai} for general surveys and explanations of the Law's prevalence, as well as the book edited by Miller \cite{Mil}, which develops much of the theory and discusses at great length applications in many fields.

One of the most important questions in the subject, as well as one of the hardest, is to determine which processes lead to Benford behavior. Many researchers \cite{Adh,AS,Bh,JKKKM,Lev1,Lev2,MN1,Rob,Sa,Sc1,Sc2,Sc3,ST} observed that sums, products and in general arithmetic operations on random variables often lead to a new random variable whose behavior is closer to satisfying Benford's law than the inputs, though this is not always true (see \cite{BH5}). Many of the proofs use techniques from measure theory and Fourier analysis, though in some special cases it is possible to obtain closed form expressions for the densities, which can be analyzed directly. In certain circumstances these results can be interpreted through the lens of a central limit theorem law; as we only care about the logarithms modulo 1, the Benfordness follows from convergence of this associated density to the uniform distribution (see for example \cite{MN1} or Chapter 3 of \cite{Mil}).

A crucial input in many of the above papers is that the random variables are \emph{independent}. In this paper we explore situations where there are dependencies. The dependencies we investigate are different than many others in the literature. For example, previous work studied dynamical systems and iterates or powers of a given random variable, where once the initial seed is chosen the resulting process is deterministic; see for example \cite{AS,BBH,Dia,KonMi,LS}. In our systems instead of having just one random variable we have a large number of independent random variables $N$ generating an enormous number of dependent random variables $M$ (often $M = 2^N$, though in one of our examples involving matrices we have $M = N!$).

Our introduction to the subject of this paper came from reading an article of Lemons \cite{Lem} (though see the next subsection for other related problems), who studied the decomposition of a conserved quantity; for example, what happens during certain types of particle decay. As the sum of the piece sizes must equal the original length, the resulting summands are clearly dependent. While it is straightforward to show whether or not individual pieces are Benford, the difficulty is in handling all the pieces simultaneously. We comment in greater detail about Lemons' work in Appendix \ref{sec:noteslemons}.

In the next subsection we describe some of the systems we study. In analyzing these problems we develop a technique to handle certain dependencies among random variables, which we then show is applicable in other systems as well.

%%%%%%%%%%%%%%%%%%%%%%%%%%%%%%%%%%%%%%%%%%%%%%%%%%%%%%%%%%%%%%%%%%%%%%%%%%
\subsection{1-Dimensional Decomposition Models and Notation}\label{sec:problemsandnotation}

The techniques we develop to show Benford behavior for problems with dependent random variables are applicable to many systems. In the interest of space, we will describe in detail here just three variations of a stick decomposition, and later discuss conjectures about other possible decomposition processes and some results in higher dimensions. As an example of the power of this approach we also prove that the leading digits of the $n!$ terms in the determinant expansion of a matrix whose entries are independent, identically distributed `nice' random variables follow a Benford distribution as $n$ tends to infinity.

There is an extensive literature on decomposition problems; we briefly comment on some other systems that have been successfully analyzed and place our work in context. Kakutani \cite{Ka} considered the following deterministic process. Let $Q_0 = \{0, 1\}$ and given $Q_k = \{x_0 = 0, x_1, \dots, x_k = 1\}$ (where the $x_i$'s are in increasing order) and an $\alpha \in (0,1)$, construct $Q_{k+1}$ by adding points $x_i + \alpha (x_{i+1}-x_i)$ in each subinterval $[x_i, x_{i+1}]$ where $x_{i+1}-x_i = \max_{1 \le \ell \le k-1} |x_{\ell+1}-x_\ell|$. He proved that as $k\to\infty$, the points of $Q_k$ become uniformly distributed, which implies that this process is non-Benford. This process has been generalized; see for example \cite{AF,Ca,Lo,PvZ,Sl,vZ} and the references therein, and especially the book \cite{Bert}. See also \cite{Kol} for processes related to particle decomposition, \cite{CaVo,Ol} for 2-dimensional examples, and \cite{IV} for a fractal setting.

Most of this paper is devoted to the following decomposition process, whose first few levels are shown in Figure \ref{fig:tree3levels}. Begin with a stick of length $L$, and a density function $f$ on $(0,1)$; all cuts will be drawn from this density. Cut the stick at proportion $p_1$. This is the first level, and results in two sticks. We now cut the left fragment at proportion $p_2$ and the right at proportion $p_3$. This process continues for $N$ iterations. Thus if we start with one stick of length $L$, after one iteration we have sticks of length $Lp_1$ and $L(1-p_1)$, after two iterations we have sticks of length $Lp_1p_2$, $Lp_1(1-p_2)$, $L(1-p_1)p_3$, and $L(1-p_1)(1-p_3)$, and so on. Iterating this process $N$ times, we are left with $2^N$ sticks.

\begin{figure}[h]
\begin{center}
\scalebox{.7}{\includegraphics{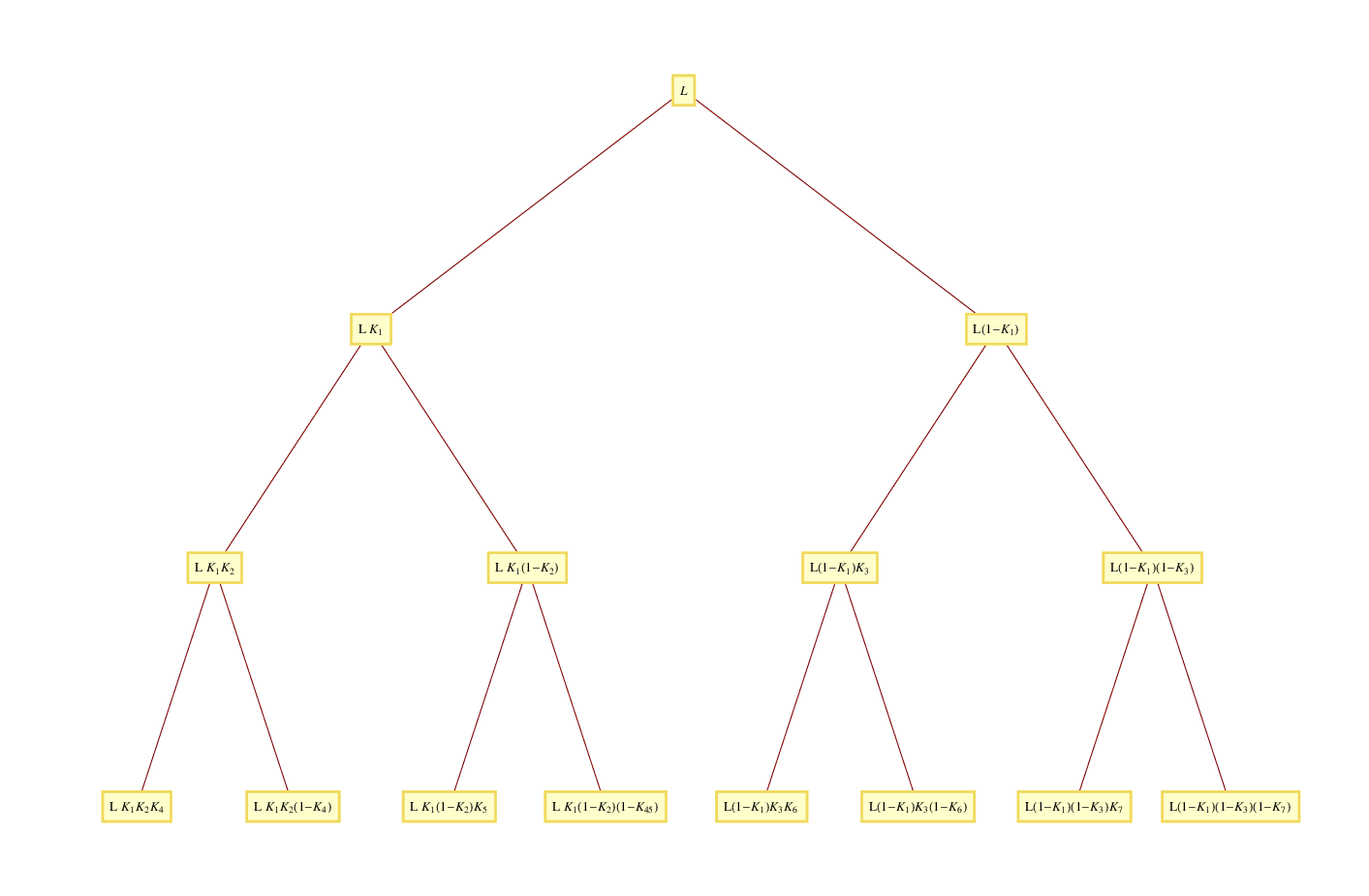}}
\caption{\label{fig:tree3levels} Unrestricted Decomposition: Breaking $L$ into pieces, $N=3$. }
\end{center}\end{figure}

We analyze whether the lengths of the resulting pieces follow Benford's Law for different choices of $f$, as well as modifications of the fragmentation procedure. This process builds on earlier work in the field, which we discuss after describing our systems.

\begin{enumerate}

\item \texttt{Unrestricted Decomposition Model}: As described above, each proportion is drawn from a distribution $f$ and all pieces decompose.

\item \texttt{Restricted Decomposition Model}: Proportions are chosen as in Case (1), but only one of the two resulting pieces from each iteration decomposes further.

\item \texttt{Fixed Proportion Decomposition}: All pieces decompose, but a fixed proportion $p$ is chosen prior to the decomposition process and is used for all sticks during every iteration.

\end{enumerate}

In addition to similarities with the work mentioned above, the last problem is a similar to a fragmentation tree model investigated by Janson and Neininger \cite{JN}. Phrasing their work in our language, they randomly chose and fixed $b$ probabilities $p_1, \dots, p_b$ and then at each stage each piece of length $x$ split into $b$ pieces of length $p_1 x, \dots, p_b x$, unless $x$ is below a critical threshold in which case the piece is never decomposed further. They were interested in the number of pieces after their deterministic process ended, whereas we are interested in the distribution of the leading digits of the lengths. While it is possible to apply some of their results to attack our third model, the problem can be attacked directly. The situation here is similar to other problems in the field. For example, Miller and Nigrini \cite{MN1} prove that certain products of random variables become Benford. While it is possible to prove this by invoking a central limit theorem type argument, it is not necessary as we do not need to know the distribution of the product completely, but rather we only need to know the distribution of its logarithm modulo 1. Further, by not using the central limit theorem they are able to handle more general distributions; in particular, they can handle random variables with infinite variance.

Before we can state our results, we first introduce some notation which is needed to determine which $f$ lead to Benford behavior.

\begin{defi}[Mellin Transform, $\mathcal{M}_f(s)$]
Let $f(x)$ be a continuous real-valued function on $[0,\infty)$.\footnote{As our random variables are proportions, for us $f$ is always a probability density with support on the unit interval.} We define its Mellin transform\footnote{Note $\mathcal{M}_f(s) = \E[x^{s-1}]$, and thus results about expected values translate to results on Mellin transforms; as $f$ is a density $\mathcal{M}_f(1) = 1$. Letting $x = e^{2\pi u}$ and $s=\sigma - i\xi$ gives $\mel_f(\sigma - i\xi) = 2\pi \int_{-\infty}^\infty \left( f(e^{2\pi u}) e^{2\pi \sigma u}\right) e^{-2\pi i u \xi} du$, which is the Fourier transform of $g(u) = 2\pi f(e^{2\pi u})e^{2\pi\sigma u}$. The Mellin and Fourier transforms as thus related; this logarithmic change of variables explains why both enter into Benford's Law problems. We can therefore obtain proofs of Mellin transform properties by mimicking the proofs of the corresponding statements for the Fourier transform; see \cite{SS1,SS2}.}, $\mathcal{M}_f(s)$, by \be \mathcal{M}_f(s) \ := \ \int_0^\infty f(x) x^s \frac{dx}{x}. \ee
\end{defi}

We next define the significand indicator function; while we work base 10, analogous definitions hold for other bases.

\begin{defi}[Significand indicator function, $\varphi_s$] For $s \in [1,10)$, let \be\label{def:phissig} \twocase{\varphi_s(x) \ := \ }{1}{{\rm if\ the\ significand\ of\ $x$\ is\ at\ most\ $s$}}{0}{{\rm otherwise;}}
\ee
thus $\varphi_s$ is the indicator function of the event of a significand at most $s$.
\end{defi}

In all proofs, we label the set of stick lengths resulting from the decomposition process by $\{X_i\}$. Note that a given stick length can occur multiple times, so each element of the set $\{X_i\}$ has associated to it a frequency.

\begin{defi}[Stick length proportions, $P_N$] Given stick lengths $\{X_i\}$, the proportion whose significand is at most $s$, $P_N(s)$, is
\bea
P_N(s)\ :=\ \frac{\sum\limits_i \varphi_s(X_i)}{\#\{X_i\}}.
\eea
\end{defi}

In the \texttt{Fixed Proportion Decomposition Model}, we are able to quantify the rate of convergence if $\log_{10} \frac{1-p}{p}$ has finite irrationality exponent.

\begin{defi}[Irrationality exponent] A number $\ga$ has irrationality exponent $\kappa$ if $\kappa$
is the supremum of all $\gamma$ with \be\label{eq:irrtypedef}\underline{\lim}_{q\to\infty} q^{\gamma +
1} \min_p \left| \ga - \frac{p}{q}\right|\ =\ 0.\ee \end{defi}

By Roth's theorem, every algebraic irrational has irrationality exponent $1$. See for example \cite{HS, MT-B, Ro} for more details.\\ \

Finally, we occasionally use big-Oh and little-oh notation. We write $f(x) = O(g(x))$ (or equivalently $f(x) \ll g(x)$) if there exists an $x_0$ and a $C>0$ such that, for all $x \ge x_0$, $|f(x)| \le Cg(x)$, while $f(x) = o(g(x))$ means $\lim_{x\to\infty} f(x)/g(x) = 0$.

%%%%%%%%%%%%%%%%%%%%%%%%%%%%%%%%%%%%%%%%%%%%%%%%%%%%%%%%%%%%%%%%%%%%%%%%%%%%%%%%%%%%%%%%%%%%%%%%%%%%%%%%%%%%%%%%%%%%%%%%%%%%%%%%%%%%%%%%%%%%%%%
\subsection{Results}\label{sec:results}

We state our results for the fragmentation models of \S\ref{sec:problemsandnotation} and some generalizations. While a common way of proving a sequence is Benford base $B$ is to show that its logarithms base $B$ are equidistributed\footnote{A sequence $\{x_n\}$ is equidistributed modulo 1 if for any $(a,b) \subset (0,1)$ we have $\lim_{N\to \infty} \frac1N \cdot \#\{n \le N: x_n \in (a,b)\} = b-a$.} (see, for example, \cite{Dia,MT-B}), as we are using the Mellin transform and not the Fourier transform we instead often directly analyze the significand function. To show that the first digits of $\{X_{i}\}$ follow a Benford distribution, it suffices to show that
\begin{enumerate}
\item $\lim\limits_{N \to \infty} \E[P_n(s)] \ = \ \log_{10}(s)$, and
\item $ \lim\limits_{N \to \infty} \var{P_n(s)} \ = \ 0$.
\end{enumerate}

Viewing $P_n(s)$ as the cumulative distribution function of the process, the above shows that we have convergence in distribution\footnote{A sequence of random variables $R_1, R_2, \dots$ with corresponding cumulative distribution functions $F_1, F_2, \dots$ \emph{converges in distribution} to a random variable $R$ with cumulative distribution $F$ if $\lim_{n\to\infty} F_n(r) = F(r)$ for each $r$ where $F$ is continuous.} to the Benford cumulative distribution function (see \cite{GS}).

%If we wished to prove other types of convergence we could argue similarly; the analysis is significantly easier if we only care about a fixed number of digits and not the entire significand function.

For ease of exposition and proof we often concentrate on the uniform distribution case, and remark on generalizations. In our proofs the key technical condition is that the densities satisfy \eqref{eq:summeltransformsnozero} below. This is a very weak condition if the densities are fixed, essentially making sure we stay away from random variables where the logarithm modulo 1 of the densities are supported on translates of subgroups of the unit interval. If we allow the densities to vary at each stage, it is still a very weak condition but it is possible to construct a sequence of densities so that, while each one satisfies the condition, the sequence does not. We give an example in Appendix \ref{benfordnotbenford}; see \cite{MN1} for more details.

%%%%%%%%%%%%%%%%%%%%%%%%%%%%%%%%%%%%%%%%%%%%%%
%%%%%%%%%%%%%%%%%%%%%%%%%%%%%%%%%%%%%%%%%%%%%%
%%%%%%%%%%%%%%%%%%%%%%%%%%%%%%%%%%%%%%%%%%%%%%
\subsubsection{1-Dimensional Results} \ \\

\begin{thm}[Unrestricted 1-Dimension Decomposition Model]\label{thm:unrestricted} Fix a continuous probability density $f$ on $(0,1)$ such that
\be\label{eq:summeltransformsnozero} \lim_{N\to\infty} \sum_{\ell = -\infty \atop \ell \neq 0}^\infty \prod_{m=1}^N \mathcal{M}_h\left(1-\frac{2\pi i \ell}{\log 10}\right) \ = \ 0,
\ee
where $h(x)$ is either $f(x)$ or $f(1-x)$ (the density of $1-p$ if $p$ has density $f$).  Given a stick of length $L$, independently choose cut proportions $p_1, p_2, \dots, p_{2^N-1}$ from the unit interval according to the probability density $f$. After $N$ iterations we have
\bea X_1 &\ =\ & Lp_1p_2p_4\dotsm p_{2^{N-2}}p_{2^{N-1}} \nonumber\\
X_2 & \ = \ & Lp_1p_2p_4 \dotsm p_{2^{N-2}} (1-p_{2^{N-1}}) \nonumber\\
&\vdots & \nonumber\\
X_{2^N-1} &\ = \ &  L(1-p_1)(1-p_3)(1-p_7) \dotsm (1-p_{2^{N-1}-1}) p_{2^N-1} \nonumber\\
X_{2^N} &\ = \ &   L(1-p_1)(1-p_3)(1-p_7) \dotsm (1-p_{2^{N-1}-1}) (1-p_{2^N-1}),
\eea
and \be P_N(s) \ := \  \frac{ \sum_{i = 1}^{2^N} \varphi_s(X_i)}{2^N}\ee is the fraction of partition pieces $X_1, \dots, X_{2^N}$ whose significand is less than or equal to $s$ (see \eqref{def:phissig} for the definition of $\varphi_s$).  Then

\begin{enumerate}

\label{expectation} \item $\displaystyle \lim _{N \to \infty} \mathbb{E} [P_N(s)] = \log_{10}s$,

\label{variance} \item $\displaystyle \lim _{N \to \infty} {\rm Var}\left(P_N(s)\right) = 0$.

\end{enumerate}

Thus as $N \to \infty$, the significands of the resulting stick lengths converge in distribution to Benford's Law.
 %The proof is given in \S\ref{sec:unrestricted}.
\end{thm}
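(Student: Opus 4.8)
The plan is to establish (1) and (2) by exploiting the logarithmic change of variables that turns the multiplicative structure of the $X_i$ into an additive one, and then applying Fourier/Mellin analysis to the resulting distribution mod $1$. First I would fix $s$ and rewrite $\varphi_s(X_i)$ in terms of the fractional part $\{\log_{10} X_i\}$: since $\varphi_s$ is the indicator of the significand being at most $s$, it is (up to the harmless issue of the endpoint and the scaling by $L$, which only shifts everything by a constant) a $1$-periodic function of $\log_{10} X_i$, namely $\mathbbm{1}_{[0,\log_{10}s]}(\{\log_{10}X_i\})$. Expanding this periodic function in a Fourier series $\sum_{\ell} c_\ell e^{2\pi i \ell \{\log_{10}x\}}$ with $c_0 = \log_{10}s$, we get
\[
\E[P_N(s)] \ = \ \log_{10}s \ + \ \sum_{\ell\neq 0} c_\ell \cdot \frac{1}{2^N}\sum_{i=1}^{2^N} \E\!\left[X_i^{2\pi i \ell/\log 10}\right],
\]
and the key observation is that each $\E[X_i^{2\pi i \ell/\log 10}]$ factors, by independence of the $p_j$'s along the (disjoint) path to leaf $i$, into a product of $N$ terms each of the form $\E[p^{2\pi i\ell/\log 10}]$ or $\E[(1-p)^{2\pi i\ell/\log 10}]$. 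Recognizing $\E[x^{w}] = \mathcal{M}_h(1+w)$, every leaf contributes a product $\prod_{m=1}^N \mathcal{M}_h\!\left(1 - 2\pi i\ell/\log 10\right)$ with $h\in\{f(x),f(1-x)\}$ (the specific mix of $f$ and $f(1-x)$ depends on which turns the path takes, but the \emph{modulus} of each factor is at most $1$ and the bound is uniform over leaves). Hence $|\E[P_N(s)] - \log_{10}s| \le \sum_{\ell\neq 0}|c_\ell| \cdot \max_{h}\prod_{m=1}^N |\mathcal{M}_h(1-2\pi i\ell/\log 10)|$, and hypothesis \eqref{eq:summeltransformsnozero} forces this to go to $0$ as $N\to\infty$ (after justifying the interchange of the $\ell$-sum and the limit, using $\sum|c_\ell|<\infty$ or a Cesàro/Dirichlet-kernel smoothing of $\varphi_s$ to control tail terms). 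This gives (1).

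For (2), I would write $\var{P_N(s)} = \E[P_N(s)^2] - \E[P_N(s)]^2$ and expand $P_N(s)^2 = 4^{-N}\sum_{i,j}\varphi_s(X_i)\varphi_s(X_j)$. Inserting the Fourier expansion for \emph{both} factors gives a double sum over $\ell,\ell'$ of $c_\ell c_{\ell'} \cdot 4^{-N}\sum_{i,j}\E[X_i^{2\pi i\ell/\log 10}X_j^{2\pi i\ell'/\log 10}]$. The crucial point is to understand the joint moment $\E[X_i^{a}X_j^{b}]$ for two leaves $i\neq j$: along the common portion of the paths from the root to $i$ and to $j$ (down to their least common ancestor) the same cut proportions appear in both $X_i$ and $X_j$, so those shared variables contribute factors like $\E[p^{a+b}]$ or $\E[(1-p)^{a+b}]$, while along the disjoint portions the variables are independent and contribute $\mathcal{M}_h(1+a)$ and $\mathcal{M}_h(1+b)$ respectively. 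Writing $d(i,j)$ for the depth of the least common ancestor, each such joint moment is a product of $d(i,j)$ "shared" Mellin factors and $2(N-d(i,j))$ "split" Mellin factors, all of modulus $\le 1$. The $\ell'=-\ell$ diagonal terms are the ones that fail to decay from the split part (since $\mathcal{M}_h(1+a)\mathcal{M}_h(1-a)$ need not be small), but for such pairs the shared-path factor is $\prod_{m=1}^{d(i,j)}\mathcal{M}_h(1)$-type — wait, more precisely it involves $\mathcal{M}_h$ evaluated at arguments $1 \pm 2(2\pi i\ell/\log 10)$ or similar, whose modulus is again $\le 1$ and, by the analogue of \eqref{eq:summeltransformsnozero}, summably small in $d(i,j)$; and a counting argument shows that only a proportion $O(2^{-d})$ of leaf-pairs have least common ancestor at depth $\ge N-d$, so the diagonal contribution is bounded by $\sum_{d\ge 0} 2^{-d}\cdot(\text{small}_d) \to 0$. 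Combining, $\E[P_N(s)^2] \to (\log_{10}s)^2$, which together with (1) yields $\var{P_N(s)}\to 0$.

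Finally, once (1) and (2) are in hand, Chebyshev's inequality gives $P_N(s)\to\log_{10}s$ in probability, hence in distribution; since $P_N(s)$ is the empirical CDF of the significands evaluated at $s$ and $\log_{10}s$ is continuous, this is exactly the assertion that the significands of the stick lengths converge in distribution to the Benford law, as claimed. (The additive constant $\log_{10}L$ coming from the initial length $L$ only translates the logarithms and washes out because $\varphi_s$ depends only on the significand; alternatively one normalizes $L=1$ at the outset.)

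The main obstacle I anticipate is the variance estimate in (2), specifically the bookkeeping of the joint moments $\E[X_i^aX_j^b]$: one must correctly identify the shared versus disjoint segments of the two root-to-leaf paths, show that the "dangerous" diagonal pairs $\ell' = -\ell$ are tamed by the decay coming from the shared segment together with the fact that deep common ancestors are rare, and carry out the interchange of the (doubly infinite) $\ell,\ell'$ sums with the limit — this last point is where a smoothing/truncation of $\varphi_s$ and the absolute summability of its Fourier coefficients, or an argument in the spirit of \cite{MN1}, will be needed to make everything rigorous. The expectation computation in (1), by contrast, is essentially a direct consequence of the factorization and hypothesis \eqref{eq:summeltransformsnozero}.
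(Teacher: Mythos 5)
Your approach is essentially the paper's: expectation via the product structure and Mellin bound (the paper delegates this to \cite{JKKKM}, which proves exactly the Poisson-summation version of your Fourier expansion), and variance via stratifying the pairs $(i,j)$ by the depth of their least common ancestor and counting. The paper carries out the variance estimate in ``physical space'' --- it shows the conditional integrals $I(L_1)$, $J(L_2)$ over the two disjoint tails are each $\log_{10}s + O(\mu^{n})$ uniformly in the shared prefix, then sums $2^{n-1}\cdot O(\mu^n)$ over $n$ --- rather than pushing the double Fourier sum through, which is a bit cleaner bookkeeping but the same mechanism.

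There is, however, a genuine confusion in your analysis of the diagonal terms $\ell'=-\ell$, and it is worth flagging because you reach the right conclusion by a slightly wrong route. You assert that the split (disjoint-path) part ``fails to decay'' on the diagonal because $\mathcal{M}_h(1+a)\mathcal{M}_h(1-a)$ ``need not be small.'' In fact it \emph{is} small: for $a$ purely imaginary, $\mathcal{M}_h(1-a)=\overline{\mathcal{M}_h(1+a)}$, so $|\mathcal{M}_h(1+a)\mathcal{M}_h(1-a)|=|\mathcal{M}_h(1+a)|^2<1$, and the disjoint part gives exponential decay in the number of uncommon factors exactly as in the off-diagonal case. Conversely, the part that genuinely does \emph{not} decay on the diagonal is the shared-prefix part: those common cut proportions appear in $X_i^aX_j^b$ with combined exponent $a+b=0$, so each shared factor contributes $\mathcal{M}_h(1)=1$ (not ``$1\pm2(2\pi i\ell/\log 10)$''). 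Your final sentence --- counting pairs with deep least common ancestor and using the decay from the disjoint tail --- is the correct fix and is exactly what the paper does; but the intermediate claims about which Mellin factors decay should be reversed. You are also right that the interchange of the $\ell$ (and $\ell,\ell'$) sums with the $N\to\infty$ limit needs care since $\varphi_s$ is an indicator, hence $c_\ell=O(1/|\ell|)$ is not absolutely summable; the paper sidesteps this by applying Poisson summation to the smooth density $g_N$ of $\log_{10}X_i$ rather than Fourier-expanding the indicator itself, which is the cleaner way to make this rigorous.
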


\begin{rek} Theorem \ref{thm:unrestricted} can be greatly generalized. We assumed for simplicity that at each stage each piece must split into exactly two pieces. A simple modification of the proof shows Benford behavior is also attained in the limit if at each stage each piece independently splits into $0, 1, 2, \dots$ or $k$ pieces with probabilities $q_0, q_1, q_2, \dots, q_k$ (so long as $q_0 < 1$). Furthermore, we do not need to use the same density for each cut, but can draw from a finite set of densities that satisfy the Mellin transform condition. Interestingly, we can construct a counter-example if we are allowed to take infinitely many distinct densities satisfying the Mellin condition; we give one in Appendix \ref{benfordnotbenford}.\footnote{The reason we need to be careful is that, while typically products of independent random variables converge to Benford behavior, there are pathological choices where this fails (see Example 2.4 of \cite{MN1}).}
\end{rek}

The discrete analogue of the unrestricted model also results in Benford behavior asymptotically.
\begin{thm}\label{thm:discrete}
Consider the following fragmentation process: Start with a rod of integer length $\ell=\ell_0$, in iteration $k$ select an integer $X_k \in [1,\ell_k]$ with uniform probability and fracture the rod so that you are left with a piece of length $X_k$ and a piece of length $\ell_i-X_i=:\ell_{i+1}$. Continue this process until $\ell_{k}=0$. Then if $\{X_\ell\}$ denotes the (random) set of the $X_k$, as $\ell\to \infty$, the distribution of $\{X_\ell\}$ converges to (strong) Benford behavior with probability $1$.
\end{thm}

\begin{rek}
The techniques used in the proof of Theorem \ref{thm:discrete} generalize naturally to a wide class of integer-valued probability functions. In particular similar proofs should work for density functions that produce a large number of fragments with high probability and can be well approximated by a continuous process satisfying the Mellin Transform property of the previous section.
\end{rek}

\begin{thm}[Restricted 1-Dimensional Decomposition Model] \label{restricted thm}
Start with a stick of length $L$, and cut this stick at a proportion $p_1$ chosen uniformly at random from $(0, 1)$. This results in two sticks, one length $Lp_1$ and one of length $L(1-p_1)$. Do not decompose the stick of length $L(1-p_1)$ further, but cut the other stick at proportion $p_2$ also chosen uniformly from the unit interval. The resulting sticks will be of lengths $p_1p_2$ and $p_1(1-p_2$). Again do not decompose the latter stick any further. Recursively repeat this process N-1 times,leaving $N$ sticks:
\bea X_1 &\ =\ & L(1-p_1) \nonumber\\
X_2 & \ = \ & Lp_1(1-p_2) \nonumber\\
&\vdots & \nonumber\\
X_{N-1} &\ = \ &   Lp_1p_2 \dotsm p_{N-2}(1-p_{N-1}).\nonumber \\
X_N &\ = \ &  Lp_1p_2 \dotsm p_{N-1}.
\eea
The distribution of the leading digits of these resulting $N$ sticks converges in distribution to Benford's Law.% The proof is given in \S\ref{sec:restricted}.
\end{thm}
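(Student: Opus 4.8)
The plan is to establish the two conditions isolated in \S\ref{sec:results} --- that $\E[P_N(s)] \to \log_{10} s$ and $\var{P_N(s)} \to 0$ as $N\to\infty$ --- which together give convergence in distribution to Benford's law. The structural observation driving everything is that, after dividing by $L$, each resulting length is a product of independent uniforms: for $k \le N-1$ we have $X_k/L = p_1\cdots p_{k-1}(1-p_k)$, and since $1-p_k$ is again uniform on $(0,1)$ and independent of $p_1,\dots,p_{k-1}$, this is a product of $k$ i.i.d.\ uniforms; likewise $X_N/L = p_1\cdots p_{N-1}$ is a product of $N-1$ i.i.d.\ uniforms. Since $\mathcal{M}_U(1 - 2\pi i\ell/\log 10) = \E[U^{-2\pi i\ell/\log 10}] = (1 - 2\pi i\ell/\log 10)^{-1}$ for $U$ uniform, we get $\mathcal{M}_{X_k/L}(1-2\pi i\ell/\log 10) = (1-2\pi i\ell/\log 10)^{-k}$, of modulus $(1+(2\pi\ell/\log 10)^2)^{-k/2}$; in particular the Mellin condition \eqref{eq:summeltransformsnozero} holds automatically here, and the argument below adapts to any $f$ satisfying it.

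For the expectation, expand $\varphi_s$ in a Fourier series as in the proof of Theorem \ref{thm:unrestricted}: $\E[\varphi_s(X_k)] = \log_{10}s + \mathrm{Err}_k$ with $|\mathrm{Err}_k| \le \sum_{\ell\neq 0}|\widehat{\varphi_s}(\ell)|\,(1+(2\pi\ell/\log 10)^2)^{-k/2} =: \epsilon(k)$, where $\widehat{\varphi_s}(\ell) = O(1/|\ell|)$. The extra factor $1/|\ell|$ makes $\epsilon(k)$ finite for every $k\ge 1$, and $\epsilon(k)\to 0$ geometrically (the $\ell=\pm1$ terms dominate), so in particular $\sum_{k\ge1}\epsilon(k) < \infty$. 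Thus $\E[\varphi_s(X_k)] \to \log_{10}s$, and by Ces\`aro averaging $\E[P_N(s)] = \frac1N\sum_{k=1}^N \E[\varphi_s(X_k)] \to \log_{10}s$.

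For the variance, write $\var{P_N(s)} = \frac1{N^2}\sum_{i,j}\mathrm{Cov}(\varphi_s(X_i),\varphi_s(X_j))$; the $N$ diagonal terms contribute $O(1/N)$. For $i<j$, exploit the nested dependence by conditioning on $\mathcal{F}_i := \sigma(p_1,\dots,p_i)$: then $X_i$ is $\mathcal{F}_i$-measurable, while $X_j = (Lp_1\cdots p_i)\,p_{i+1}\cdots p_{j-1}(1-p_j)$ equals an $\mathcal{F}_i$-measurable scalar times a product of $j-i$ uniforms independent of $\mathcal{F}_i$. Applying the Fourier estimate conditionally --- and noting the bound involves only $|\mathcal{M}|$, hence is uniform in the value of that scalar --- gives $\bigl|\E[\varphi_s(X_j)\mid\mathcal{F}_i] - \log_{10}s\bigr| \le \epsilon(j-i)$ pointwise. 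Writing $\E[\varphi_s(X_j)\mid\mathcal{F}_i] = \log_{10}s + R$ with $|R|\le\epsilon(j-i)$, taking expectations also gives $|\E[\varphi_s(X_j)] - \log_{10}s|\le\epsilon(j-i)$, and since $0\le\varphi_s\le1$ a short computation yields $|\mathrm{Cov}(\varphi_s(X_i),\varphi_s(X_j))| \le 2\epsilon(j-i)$. Summing over $i<j$ with $m=j-i$ then gives $\var{P_N(s)} \le \frac1{4N} + \frac{4}{N^2}\sum_{m=1}^{N-1}(N-m)\epsilon(m) \le \frac1{4N} + \frac4N\sum_{m\ge1}\epsilon(m) = O(1/N) \to 0$.

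The crux of the argument is the conditioning step: that $X_i$ and $X_j$ share exactly the prefix $p_1,\dots,p_{\min(i,j)}$, and that conditioning on it reduces the ``later'' length to a known scalar times a product of $|i-j|$ fresh uniforms, with a significand-distribution error bound that is \emph{uniform in the conditioning} --- this is what converts the near-independence of distant pieces into the covariance decay $|\mathrm{Cov}(\varphi_s(X_i),\varphi_s(X_j))| \ll \epsilon(|i-j|)$. Once that is in hand the remaining steps are routine summation. The only other point needing care is the Fourier expansion of $\varphi_s$ together with the convergence and vanishing of the error sum over $\ell$, but this is precisely the machinery already developed for the Unrestricted Decomposition Model and can be quoted.
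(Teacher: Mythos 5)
Your proof is correct and rests on the same core idea as the paper's: for $i<j$, condition on the shared prefix $\mathcal{F}_i=\sigma(p_1,\dots,p_i)$, note that $X_j$ conditionally equals an $\mathcal{F}_i$-measurable scalar times a product of $j-i$ fresh uniforms, and invoke the quantified Mellin/Fourier bound for such products, which depends only on $|\mathcal{M}_f|$ and is therefore uniform in the conditioning scalar. Where you genuinely streamline the paper's argument is in the bookkeeping: you observe that the covariance bound $|\mathrm{Cov}(\varphi_s(X_i),\varphi_s(X_j))| \le 2\epsilon(j-i)$ needs only the conditional estimate $|\E[\varphi_s(X_j)\mid\mathcal{F}_i]-\log_{10}s|\le\epsilon(j-i)$ and the trivial bound $0\le\varphi_s(X_i)\le 1$, not any closeness of $\E[\varphi_s(X_i)]$ itself to $\log_{10}s$. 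This lets you sum over \emph{all} pairs $i<j$ directly, whereas the paper truncates to the set $\mathcal{A}=\{(i,j):\log N\le i\le j-\log N\le N-\log N\}$ (forcing both $i$ and $j-i$ to be at least $\log N$) and then argues separately that the excluded pairs are negligible. Your Ces\`aro-averaging treatment of the mean is similarly a cleaner version of the paper's ``the first $\log N$ pieces don't matter'' remark. One small caution worth flagging: the Poisson-summation bound from Appendix A is $|\E[\varphi_s]-\log_{10}s|\le \log_{10}s\sum_{\ell\ne0}|\mathcal{M}|^k$ with no extra $1/|\ell|$ weight, and that sum diverges at $k=1$; you get the $|\widehat{\varphi_s}(\ell)|=O(1/|\ell|)$ factor by expanding the indicator in its Fourier series instead, which is fine, but one should either justify that interchange or simply bound $\epsilon(1)$ trivially by a constant — either way $\sum_k\epsilon(k)<\infty$ and the $O(1/N)$ variance bound stands.
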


\begin{rek} We may replace the uniform distribution with any nice distribution that satisfies the Mellin transform condition of \eqref{eq:summeltransformsnozero}. \end{rek}

\begin{thm}[Fixed Proportion 1-Dimensional Decomposition Model] \label{fixed proportion thm}
Choose any $p \in (0,1)$. In Stage 1, cut a given stick at proportion $p$ to create two pieces. In Stage 2, cut each resulting piece into two pieces at the same proportion $p$. Continue this process $N$ times, generating $2^N$ sticks with $N+1$ distinct lengths (assuming $p \neq 1 /2 $) given by
\bea x_1 &\ =\ & Lp^N \nonumber\\
x_2 & \ = \ & Lp^{N-1}(1-p)\nonumber\\
x_3 & \ = \ & Lp^{N-2}(1-p)^2\nonumber\\
&\vdots & \nonumber\\
x_{N} &\ = \ &  Lp(1-p)^{N-1}\nonumber\\
x_{N+1} &\ = \ &   L(1-p)^N,
\eea
where the frequency of $x_n$ is ${N \choose n}/2^N$. Choose $y$ so that $10^y = (1-p)/p$, which is the ratio of adjacent lengths (i.e., $x_{i+1}/x_i$). The decomposition process results in stick lengths that converge in distribution to Benford's Law if and only if $y\not\in\Q$. If $y$ has finite irrationality exponent, the convergence rate can be quantified in terms of the exponent, and there is a power savings.
% The proof is given in \S\ref{sec:fixedprop}.
\end{thm}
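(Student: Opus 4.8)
The plan is to exploit the fact that this model involves \emph{no} randomness: the object whose convergence we must establish is the deterministic sequence of weighted empirical measures on $[0,1)$,
\[ \mu_N \ := \ \sum_{n=0}^{N} \binom{N}{n} 2^{-N}\, \delta_{\{\log_{10}(Lp^{N-n}(1-p)^n)\}}, \]
where $\{\cdot\}$ denotes the fractional part, since the distinct stick length $Lp^{N-n}(1-p)^n$ occurs with relative frequency $\binom{N}{n}2^{-N}$ and $P_N(s) = \mu_N\big([0,\log_{10}s]\big)$. Because $\log_{10}(Lp^{N-n}(1-p)^n) = \log_{10}(Lp^N) + n\log_{10}\!\big(\tfrac{1-p}{p}\big) = c_N + ny$ with $c_N := \log_{10}(Lp^N)$, the atoms of $\mu_N$ form a weighted arithmetic progression modulo $1$ with common difference $y$; equivalently, $\{\log_{10}X\}$ for a uniformly chosen one of the $2^N$ sticks equals $\{c_N + y(B_1+\cdots+B_N)\}$ with the $B_i$ i.i.d.\ Bernoulli$(1/2)$, so the problem is secretly about a sum of independent variables with the degenerate per-step law $\tfrac12\delta_0 + \tfrac12\delta_y$ on the logarithmic scale --- which is exactly why this model falls outside the scope of \eqref{eq:summeltransformsnozero} and must be treated by hand. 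The single computation that drives everything is the Fourier coefficient: by the binomial theorem,
\[ \widehat{\mu_N}(\ell) \ = \ e^{-2\pi i\ell c_N}\left(\frac{1+e^{-2\pi i\ell y}}{2}\right)^{\!N}, \qquad\text{so}\qquad \big|\widehat{\mu_N}(\ell)\big| \ = \ |\cos\pi\ell y|^N. \]

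For the equivalence I would argue as follows. Benford's law corresponds to the uniform measure $\lambda$ on $[0,1)$, whose CDF $t\mapsto t$ is continuous; hence ``$P_N(s)\to\log_{10}s$ for all $s$'' is precisely weak convergence $\mu_N\to\lambda$, which by Weyl's criterion holds if and only if $\widehat{\mu_N}(\ell)\to 0$ for every $\ell\neq 0$. From the formula above this occurs if and only if $|\cos\pi\ell y|<1$ for every $\ell\neq 0$, i.e.\ $\ell y\notin\Z$ for every $\ell\neq 0$, i.e.\ $y\notin\Q$; this gives both directions of the stated iff. (Concretely, if $y = a/q$ in lowest terms then $|\widehat{\mu_N}(q)| = |\cos\pi a|^N = 1$ for every $N$, so $\mu_N$ cannot approach $\lambda$.)

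For the rate when $y$ has finite irrationality exponent $\kappa$, I would feed the identity $|\widehat{\mu_N}(\ell)| = |\cos\pi\ell y|^N$ into the Erd\H{o}s--Tur\'an inequality: for every integer $K\ge 1$,
\[ \sup_{s\in[1,10)}\big|P_N(s)-\log_{10}s\big| \ \le \ D^*(\mu_N) \ \ll \ \frac1K \ + \ \sum_{\ell=1}^{K}\frac{|\cos\pi\ell y|^N}{\ell}. \]
This already reproves the qualitative ``if'' direction, uniformly in $s$: fix $K$, let $N\to\infty$ so each term of the finite sum vanishes, then let $K\to\infty$. For the rate, combine the elementary bound $|\cos\pi\ell y| = \cos(\pi\|\ell y\|) \le e^{-c\|\ell y\|^2}$ (absolute $c>0$, with $\|\cdot\|$ the distance to the nearest integer) with the Diophantine consequence of finite irrationality exponent --- for each $\varepsilon>0$ there is $c_\varepsilon>0$ with $\|\ell y\|\ge c_\varepsilon\,\ell^{-\kappa-\varepsilon}$ for all $\ell\ge 1$ --- to get $|\cos\pi\ell y|^N \le e^{-cc_\varepsilon^2 N\ell^{-2\kappa-2\varepsilon}}$, which increases in $\ell$, so $\sum_{\ell\le K}\ell^{-1}|\cos\pi\ell y|^N \ll (1+\log K)\,e^{-cc_\varepsilon^2 N K^{-2\kappa-2\varepsilon}}$. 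Choosing $K$ to be a small power of $N$ (so that $NK^{-2\kappa-2\varepsilon}$ grows like a positive power of $\log N$ and the exponential is negligible) leaves the $1/K$ term dominant and yields $\sup_{s}|P_N(s)-\log_{10}s| \ll_{y,\varepsilon} N^{-\frac{1}{2\kappa}+\varepsilon}$, a power savings in the number of stages $N$ (e.g.\ $N^{-1/2+\varepsilon}$ when $y$ is algebraic, since then $\kappa = 1$).

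The step I expect to be the main obstacle is this last optimization: one must balance the truncation error $\asymp 1/K$ against the low-frequency contributions, for which $|\cos\pi\ell y|$ can be arbitrarily close to $1$ --- and it is exactly here that the hypothesis on the irrationality exponent is essential, supplying a polynomial lower bound for $\|\ell y\|$ so that the decay $e^{-cN\|\ell y\|^2}$ becomes effective before $\ell$ reaches the cutoff $K$. The remaining ingredients are routine: the Erd\H{o}s--Tur\'an inequality applies verbatim to a weighted point set because it uses only that the weights form a probability vector, and the moving base point $c_N$ is harmless since $|e^{-2\pi i\ell c_N}| = 1$.
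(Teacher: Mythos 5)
Your proposal is correct, and it takes a genuinely different route from the paper. The paper never writes down the Fourier coefficients of the weighted measure $\mu_N$; instead it works with the binomial weights directly: it truncates to within $N^\epsilon$ standard deviations of the mean via Chebyshev, partitions the remaining indices into blocks of length $N^\delta$, shows (via an elementary ratio estimate on binomial coefficients) that the weights are nearly constant on each block, invokes the classical Koksma/Kuipers--Niederreiter discrepancy bound for the geometric progression within each block, and then reassembles everything with the multisection formula. For the rational case it again uses multisection to show the $q$ residue classes are asymptotically equiprobable, and it treats the infinite-irrationality-exponent case as a separate third argument. Your computation that $\widehat{\mu_N}(\ell) = e^{-2\pi i\ell c_N}\bigl(\tfrac{1+e^{-2\pi i\ell y}}{2}\bigr)^{N}$, with $|\widehat{\mu_N}(\ell)| = |\cos\pi\ell y|^{N}$, collapses all three of the paper's cases into one: Weyl's criterion for weak convergence of measures gives the iff immediately (including the rational direction, since $|\widehat{\mu_N}(q)|\equiv 1$ when $y=a/q$), and the infinite-irrationality-exponent case falls out of the same argument with no extra work. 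For the rate, feeding the explicit Fourier bound into Erd\H{o}s--Tur\'an (which indeed applies to arbitrary probability measures on the circle, not just empirical point measures) is both simpler and sharper: you obtain $O(N^{-1/(2\kappa)+\varepsilon})$, whereas tracing the paper's optimization in the remark after Case II gives $\delta = (\tfrac12-\epsilon)/(1+\tfrac1\kappa-\epsilon')$ and hence an overall exponent of roughly $-1/(2(\kappa+1))$, which is strictly worse (e.g.\ $N^{-1/2+\varepsilon}$ versus $N^{-1/4+\varepsilon}$ for algebraic $y$). One tiny slip: where you say to choose $K$ so that $NK^{-2\kappa-2\varepsilon}$ ``grows like a positive power of $\log N$,'' what you need is simply that it grows unboundedly fast enough that $e^{-cc_\varepsilon^2 NK^{-2\kappa-2\varepsilon}}$ beats every polynomial; taking $K\asymp N^{1/(2\kappa+2\varepsilon)}/(\log N)^{A}$ with $A(2\kappa+2\varepsilon)>1$ does this and yields the stated $N^{-1/(2\kappa)+\varepsilon}$ savings.
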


%%%%%%%%%%%%%%%%%%%%%%%%%%%%%%%%%%%%%%%%%%%%%%
%%%%%%%%%%%%%%%%%%%%%%%%%%%%%%%%%%%%%%%%%%%%%%
%%%%%%%%%%%%%%%%%%%%%%%%%%%%%%%%%%%%%%%%%%%%%%
\subsubsection{2-Dimensional Results} \ \\

The next two results are indicative of what can be done in higher dimensions.

\begin{thm}[Unrestricted Decomposition of Triangles]\label{thm:triangle}
Consider the following two-dimensional decomposition process. Beginning with a triangle of area $A$, select a point in the interior according to some probability distribution and connect this point to each of the three vertices to obtain three sub-triangles. Now independently select a point in the interior of each of these triangles and repeat this process until there are $3^N$ triangles.
Fix a continuous probability density $f$ on a triangular region, $T_0$. Define $\tilde{f} := (f_1,f_2,1-f_1-f_2)$. Let $\mathcal{X}_N$ denote the set of areas of the sub-triangles that result from $N$ iterations of the decomposition of $T_0$ described above. Then, if
\be\label{eq:condfortriangles} \lim_{N \to \infty} \sum_{\substack{\ell = -\infty \\ \ell \neq 0}}^\infty \prod_{m=1}^N
	\mathcal{M} \tilde{f} \left( 1 - \frac{2 \pi i \ell}{\log 10} \right), \ee
then the significands of the areas in $\mathcal{X}$ converge in distribution to Benford's law. \fix{BLAINE: NEED TO FIX HERE. WHAT DO YOU WANT THIS LIMIT TO BE? YOU HAVE A PRODUCT OVER $m$ BUT NO $m$ DEPENDENCE; ARE YOU CHOOSING FUNCTIONS FROM THE TRIPLE? IS THE TRIPLE A VECTOR? NEED TO BE MORE CAREFUL. IN OTHER PROOFS BELOW WE ALSO HAVE A PRODUCT OVER $m$ BUT NO $m$ DEPENDENCE, SO THAT'S FINE, BUT JUST NEED TO BE CLEAR.}
\fix{I THINK THIS IS WHAT IS MEANT IN THE MELLIN CONDITION, BUT NOT CLEAR IT IS WELL DEFINED, HOW IT RELATES TO THE OTHER DEFINITIONS \be \tilde{f}(x)\ =\ \int_{y_1 = - \infty}^\infty \cdots \int_{y_{k-1} = - \infty}^\infty f(x,y_1,\ldots,y_{k-1}) d y_1 \cdots d y_{k-1}.\ee}
\end{thm}

\begin{rek}
Note that the condition \eqref{eq:condfortriangles} is again quite weak, and holds if $\tilde{f}$ has finite moments. \fix{DAVID: FINITE OR BOUNDED?} \reply{BLAINE: I THINK BOUNDED -DavidB}
\end{rek}

Affine mappings do not exist between arbitrary quadrilaterals, as affine mappings preserve parallel lines. However, there are continuous mappings from arbitrary convex quadrilaterals to squares that preserve the ratio of areas; the construction of Gromov after Knothe provides such a mapping with other nice properties \cite{Gr}.

\begin{thm}[Unrestricted Decomposition of Quadrilaterals]\label{thm:quadrilateral}
Start with the unit square, a continuous probability density $f$ on $(0,1)$, and a continuous probability density $g$ on $(0,1)^2$. We independently select a point on each side according to $f$. Call these $A,B,C,D$. We then choose a point $E$ in the interior of the quadrilateral $ABCD$ according to the composition of $g$ with a mapping that preserves ratios of areas. We now connect $E$ to each of $A,B,C,D$ in order to decompose the square into four convex quadrilaterals. We then perform the same decomposition independently on each of these quadrilaterals, repeating this process until we obtain $4^N$ quadrilaterals. \fix{BLAINE: NEED TO BE A BIT CAREFUL AS TO HOW WE CHOOSE THE NEW POINTS IN THE NEW QUADRILATERALS, AS THESE WON'T BE SQUARES. THIS IS WHY CAN'T JUST USE THE $g$ ON $(0,1)^2$. NEED A BIT MORE DETAIL HERE.} \fix{I THINK THEY SHOULD BE CHOSEN ACCORDING TO THE COMPOSITION OF G WITH A FIXED MAPPING (DEPENDING ON THE QUADRILATERAL)}Suppose $f$ and $g$ have finite \fix{BDD?} moments. Let $\mathcal{X}_N$ denote the set of areas of the sub-quadrilaterals that result from $N$ iterations of the decomposition of the unit square described above. Then, as $N \to \infty$, the significands of the areas in $\mathcal{X}_N$ converge in distribution to Benford's law.
\end{thm}

\begin{rek}
A more general version of Theorem \ref{thm:quadrilateral} is true, with $f$ and $g$ satisfying a complicated Mellin condition that is implied by finite moments.
\end{rek}

%%%%%%%%%%%%%%%%%%%%%%%%%%%%%%%%%%%%%%%%%%%%%%%%%%%%%%%%%%%%%%%%%%%%%%%%%%%%%%%%%%%%%%%%%%%%%%%%%%%%%%%%%%%%%%%%%%%%%%%%%%%%%%%%%%%%%%%%%%
%%%%%%%%%%%%%%%%%%%%%%%%%%%%%%%%%%%%%%%%%%%%%%%%%%%%%%%%%%%%%%%%%%%%%%%%%%%%%%%%%%%%%%%%%%%%%%%%%%%%%%%%%%%%%%%%%%%%%%%%%%%%%%%%%%%%%%%%%%
%%%%%%%%%%%%%%%%%%%%%%%%%%%%%%%%%%%%%%%%%%%%%%%%%%%%%%%%%%%%%%%%%%%%%%%%%%%%%%%%%%%%%%%%%%%%%%%%%%%%%%%%%%%%%%%%%%%%%%%%%%%%%%%%%%%%%%%%%%
\subsection{Sketch of Proofs} \ \\

We briefly comment on the proofs. We proceed by quantifying the dependencies between the various fragments, and showing that the number of pairs that are highly dependent is small. This technique is applicable to a variety of other systems, and we give another example below. These dependencies introduce complications which prevent us from proving our claims by directly invoking standard theorems on the Benfordness of products. For example, we cannot use the well-known fact that powers of an irrational number $r$ are Benford to prove Theorem \ref{fixed proportion thm} because we must also take into account how many pieces we have of each fragment (equivalently, how many times we have $r^m$ as a function of $m$). We provide arguments in greater detail than is needed for the proofs so that, if someone wished to isolate out rates of convergence, that could be done with little additional effort. While optimizing the errors is straightforward, doing so clutters the proof and can have computations very specific to the system studied, and thus we have chosen not to extract the best possible error bounds in order to keep the exposition as simple as possible.

We end with the promised example of another system where our techniques are applicable. The proof, given in \S\ref{sec:determinant}, utilizes the same techniques as that of the stick decomposition. We again have a system with a large number of independent random variables, $n$, leading to an enormous number of dependent random variables, $n!$.

\begin{thm}\label{thm:detpiecesbenford}
Let $A$ be an $n \times n$ matrix with independent, identically distributed  entries $a_{ij}$ drawn from a distribution $X$ with density $f$. The distribution of the significands of the $n!$ terms in the determinant expansion of $A$ converge in distribution to  Benford's Law if \eqref{eq:summeltransformsnozero} holds with $h=f$.
\end{thm}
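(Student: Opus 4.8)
The plan is to follow the scheme of Section~\ref{sec:results}. Up to signs (which do not affect significands), the $n!$ terms in the determinant expansion are $Y_\sigma := \prod_{i=1}^n a_{i,\sigma(i)}$ with $\sigma$ ranging over the symmetric group $S_n$; set $P_n(s) := \frac{1}{n!}\sum_{\sigma \in S_n}\varphi_s(Y_\sigma)$, so that it suffices to prove $\lim_{n\to\infty}\E[P_n(s)] = \log_{10}s$ and $\lim_{n\to\infty}\var{P_n(s)} = 0$. The expectation is immediate: for each fixed $\sigma$ the factors $a_{1,\sigma(1)},\dots,a_{n,\sigma(n)}$ lie in distinct rows, hence are independent, so $Y_\sigma$ is a product of $n$ i.i.d.\ copies of $X$ and $\E[\varphi_s(Y_\sigma)]$ is the same for every $\sigma$. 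Thus $\E[P_n(s)] = {\rm Prob}(S_{10}(X_1\cdots X_n)\le s)$ with $X_1,\dots,X_n$ i.i.d.\ $\sim X$, which is the classical ``product of independent random variables'' situation: expanding $\varphi_s$ in its Fourier series on $[0,1)$ and using $\E[(X_1\cdots X_n)^{2\pi i\ell/\log 10}] = \mel_f(1+\tfrac{2\pi i\ell}{\log 10})^n$, hypothesis \eqref{eq:summeltransformsnozero} with $h=f$ forces this to $\log_{10}s$, exactly as in the proof of Theorem~\ref{thm:unrestricted}. The same computation yields the \emph{uniform} refinement needed below: writing $g_m(w) := {\rm Prob}(S_{10}(w\,X_1\cdots X_m)\le s)$ for $w>0$, the Fourier coefficients of the density of the fractional part of $\log_{10}(w\,X_1\cdots X_m)$ have modulus $|\mel_f(1-\tfrac{2\pi i\ell}{\log 10})|^m$, independent of $w$, so $|g_m(w)-\log_{10}s|\le \varepsilon_m$ for every $w>0$, where $\varepsilon_m := \sum_{\ell\neq 0}|\mel_f(1-\tfrac{2\pi i\ell}{\log 10})|^m$ is nonincreasing in $m$, finite for $m\ge 2$, and tends to $0$ by \eqref{eq:summeltransformsnozero}.

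For the variance I expand $\E[P_n(s)^2] = \frac{1}{(n!)^2}\sum_{\sigma,\tau}\E[\varphi_s(Y_\sigma)\varphi_s(Y_\tau)]$ and analyze each pair through its agreement set $S := \{i : \sigma(i) = \tau(i)\}$, whose size $k = |S|$ is the number of fixed points of $\tau^{-1}\sigma$. The structural heart of the proof is that
\[
Y_\sigma \ =\ W U,\qquad Y_\tau \ =\ W V,\qquad W := \prod_{i\in S}a_{i,\sigma(i)},\quad U := \prod_{i\notin S}a_{i,\sigma(i)},\quad V := \prod_{i\notin S}a_{i,\tau(i)},
\]
and that these three index sets pick out pairwise disjoint collections of matrix entries — the disjointness of the $U$-entries from the $V$-entries is precisely the statement that $\sigma(i)\neq\tau(i)$ for $i\notin S$. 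Hence $W,U,V$ are mutually independent, $W$ being a product of $k$ i.i.d.\ copies of $X$ and $U,V$ each a product of $n-k$ such copies. Conditioning on $W$ and using independence of $U$ and $V$ then gives $\E[\varphi_s(Y_\sigma)\varphi_s(Y_\tau)] = \E_W[g_{n-k}(W)^2]$, and hence, by the uniform bound above (and $0\le g_{n-k}\le 1$), $\big|\E[\varphi_s(Y_\sigma)\varphi_s(Y_\tau)] - (\log_{10}s)^2\big| \le 2\varepsilon_{n-k}$.

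It remains to count and sum. The number of pairs $(\sigma,\tau)$ with $|S| = k$ is $n!\binom{n}{k}D_{n-k}$, where $D_m$ is the number of derangements of $m$ letters; since $D_1 = 0$, every pair has either $k = n$ (the $n!$ diagonal pairs $\sigma=\tau$, which contribute at most $1/n!$ to $\E[P_n(s)^2]$ since $\varphi_s^2 = \varphi_s$) or $k \le n-2$, i.e.\ $n-k\ge 2$. Combining this with the bound $2\varepsilon_{n-k}$ and with $\binom{n}{k}D_{n-k}\le n!/k!$, and writing $j = n-k$,
\[
\big|\,\E[P_n(s)^2] - (\log_{10}s)^2\,\big| \ \le\ \frac{1}{n!} \ +\ \sum_{j=2}^{n}\frac{2\varepsilon_{j}}{(n-j)!}.
\]
Since $\varepsilon_j \le \varepsilon_2 < \infty$ for $j \ge 2$ and $\varepsilon_j \to 0$, dominated convergence shows this sum tends to $0$. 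As $\E[P_n(s)]^2 \to (\log_{10}s)^2$ as well, $\var{P_n(s)}\to 0$, and convergence in distribution to Benford's Law follows exactly as in Section~\ref{sec:results}.

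I expect the main obstacle to be the dependency analysis above: the $n!$ determinant terms are very far from independent, any two of them agreeing in a large number of factors, and the crux is to see that this shared mass is harmless. It is, for two reasons: multiplying a near-Benford random variable by an arbitrary independent factor leaves it near-Benford, which is what makes the spurious common factor $W$ drop out of $\E_W[g_{n-k}(W)^2]$ uniformly in its value; and the two ``private'' parts $U,V$ are genuinely independent and — since no permutation of $n$ letters has exactly $n-1$ fixed points — always consist of at least two i.i.d.\ factors, hence are quantifiably close to Benford. Once these points are in place, the combinatorics of fixed points of a random permutation together with the decay $\varepsilon_m\to 0$ (the content of \eqref{eq:summeltransformsnozero}) finish the proof, in close parallel with the stick-decomposition arguments.
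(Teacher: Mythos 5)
Your proof is correct and follows essentially the same route as the paper's: fix a pair of permutations, split the determinant terms into the product over their agreement set and the two private products, invoke near-Benfordness of the private parts, and sum over the number of agreements using the combinatorics of fixed points of a permutation. Where the paper sketches this by invoking the Poisson limit for the number of fixed points and a $\log n$ threshold ``mimicking the stick proof,'' you make the dependency structure fully explicit: you note that $W$, $U$, $V$ draw from pairwise disjoint sets of matrix entries (the $U$--$V$ disjointness being exactly $\sigma(i)\neq\tau(i)$ off the agreement set), observe that $D_1=0$ forces every off-diagonal pair to have $n-k\ge 2$ private factors, and replace the threshold argument by the exact count $n!\binom{n}{k}D_{n-k}\le n!/k!$ together with a uniform-in-$w$ Fourier bound for $g_m(w)$ and dominated convergence. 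That is a cleaner and more quantitative bookkeeping of the same idea. One tiny point to tidy up: the assertion that $\varepsilon_2<\infty$ is not automatic from \eqref{eq:summeltransformsnozero}; just replace $\varepsilon_j$ by $\min(1,\varepsilon_j)$ (the trivial bound $|g_m(w)-\log_{10}s|\le 1$ always holds) and the dominated-convergence step goes through unchanged once $\varepsilon_j<\infty$ for $j$ sufficiently large.
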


%%%%%%%%%%%%%%%%%%%%%%%%%%%%%%%%%%%%%%%%%%%%%%%%%%%%%%%%%%%%%%%%%%%%%%%%%%%%%%%%%%%%%%%%%%%%%%%%%%%%%%%%%%%%%%%%%%%%%%%%%%%%%%%%%%%%%%%%%%%%%%%%%%%%
%%%%%%%%%%%%%%%%%%%%%%%%%%%%%%%%%%%%%%%%%%%%%%%%%%%%%%%%%%%%%%%%%%%%%%%%%%%%%%%%%%%%%%%%%%%%%%%%%%%%%%%%%%%%%%%%%%%%%%%%%%%%%%%%%%%%%%%%%%%%%%%%%%%%
%					   PROOF OF UNRESTRICTED DECOMPOSITION
%%%%%%%%%%%%%%%%%%%%%%%%%%%%%%%%%%%%%%%%%%%%%%%%%%%%%%%%%%%%%%%%%%%%%%%%%%%%%%%%%%%%%%%%%%%%%%%%%%%%%%%%%%%%%%%%%%%%%%%%%%%%%%%%%%%%%%%%%%%%%%%%%%%%
%%%%%%%%%%%%%%%%%%%%%%%%%%%%%%%%%%%%%%%%%%%%%%%%%%%%%%%%%%%%%%%%%%%%%%%%%%%%%%%%%%%%%%%%%%%%%%%%%%%%%%%%%%%%%%%%%%%%%%%%%%%%%%%%%%%%%%%%%%%%%%%%%%%%
\section{Proof of Theorem \ref{thm:unrestricted}: Unrestricted Decomposition} \label{sec:unrestricted}
%We first prove that the first digit distribution of the given decomposition model is Benford on average in the limit (Part 1 of the theorem).  We then prove that this average first-digit distribution is, in fact, almost surely the outcome of the model in the limit (Part 2 of the theorem).

A crucial input in this proof is a quantified convergence of products of independent random variables to Benford behavior, with the error term depending on the Mellin transform. We use Theorem 1.1 of \cite{JKKKM} (and its generalization, given in Remark 2.3 there); for the convenience of the reader we quickly review this result and its proof in Appendix A of \cite{B--} (the expanded arXiv version of this paper). The dependencies of the pieces is a major obstruction; we surmount this by breaking the pairs into groups depending on how dependent they are (specifically, how many cut proportions they share).

\begin{rek} The key condition in Theorem \ref{thm:unrestricted}, \eqref{eq:summeltransformsnozero}, is extremely weak and is met by most distributions. For example, if $f$ is the uniform density on $(0,1)$ then
\be \mel_f \left(1-\frac{2\pi i \ell}{\log 10}\right) \ = \ \left(1 - \frac{2\pi i \ell}{\log 10}\right)^{-1},
\ee
which implies
\be \lim_{N\to\infty} \left| \sum_{\ell = -\infty \atop \ell \neq 0}^\infty \prod_{m=1}^N \mathcal{M}_f\left(1-\frac{2\pi i \ell}{\log 10}\right) \right| \ \le \ 2 \lim_{N\to\infty} \sum_{\ell=1}^\infty \left|1 - \frac{2\pi i \ell}{\log 10}\right|^{-N} \ = \ 0 \ee (we wrote the condition as $\prod_{m=1}^N \mel_f$ instead of $\mel_f^N$ to highlight where the changes would surface if we allowed different densities for different cuts). While this condition is weak, it is absolutely necessary to ensure convergence to Benford behavior; see Appendix \ref{benfordnotbenford}.
\end{rek}

To prove convergence in distribution to Benford's Law, we first prove in \S\ref{unrestricted expval} that $\mathbb{E}[P_N(s)] = \log_{10} s$, and then in \S\ref{sec:varianceunrestricted} prove that ${\rm Var}\left(P_N(s)\right) \to 0$; as remarked earlier these two results yield the desired convergence. The proof of the mean is significantly easier than the proof of the variance as expectation is linear, and thus there are no issues from the dependencies in the first calculation, but there are in the second. The key contribution of this work is quantifying how often certain dependencies can arise, which leads to a tractable analysis.

%%%%%%%%%%%%%%%%%%%%%%%%%%%%%%%%%%%%%%%%%%%%%%%%%%%%%%%%%%%%%%%%%%%%%%%%%%%%%%%%%%
\subsection{Expected Value}\label{unrestricted expval}

\begin{proof}[Proof of Theorem \ref{thm:unrestricted} (Expected Value)] By linearity of expectation,
\begin{equation}
\mathbb{E}[P_N(s)] \ = \  \mathbb{E}\left[ \frac{ \sum _{i = 1} ^{2^N} \varphi_s(X_i)}{2^N} \right] \ = \  \frac{1}{2^N} \sum_{i = 1} ^{2^N} \mathbb{E} [\varphi_s(X_i)]. \label{expect}
\end{equation}

We recall that all pieces can be expressed as the product of the starting length $L$ and cutting proportions $p_i$. While there are dependencies among the lengths $X_i$, there are no dependencies among the $p_i$'s. A given stick length $X_i$ is determined by some number of factors $k$ of $p_i$ and $N-k$ factors of $1-p_i$ (where $p_i$ is a cutting proportion between $0$ and $1$ drawn from a distribution with density $f$). By relabeling if necessary, we may assume \be X_i\ =\ L p_1 p_2 \dotsm p_{k} (1-p_{k+1}) \dotsm (1-p_N); \ee the first $k$ proportions  are drawn from a distribution with density $f(x)$ and the last $N-k$ from a distribution with density $f(1-x)$.

The proof is completed by showing $\lim_{N\to\infty}\E[\varphi_s(X_i)] = \log_{10} s$. We have
\begin{eqnarray}
\mathbb{E}[\varphi_s(X_i)] & \ = \ & \int_{p_1 = 0}^{1} \int_{p_2 = 0}^{1} \dotsm \int_{p_N = 0}^{1} \varphi_s \left( L \prod_{r = 1}^{k} p_r \prod_{m=k+1}^N (1-p_m) \right)\nonumber\\ & & \ \ \ \ \cdot \ \prod_{r=1}^{k} f(p_r) \prod_{m=k+1}^N f(1-p_m) \ dp_1 dp_2 \dotsm dp_N.
\end{eqnarray}

This is equivalent to studying the distribution of a product of $N$ independent random variables (chosen from one of two densities) and then rescaling the result by $L$. The convergence of $L \prod_{r = 1}^{k} p_r \prod_{m=k+1}^N (1-p_m)$ $=$ $X_i$ to Benford follows from \cite{JKKKM} (the key theorem is summarized for the reader's convenience in Appendix A in \cite{B--}, the expanded arXiv version of this paper). We find $\E[\varphi_s(X_i)]$ equals $\log_{10} s$ plus a rapidly decaying $N$-dependent error term. This is because the Mellin transforms (with $\ell \neq 0$) are always less than 1 in absolute value. Thus the error is bounded by the maximum of the error from a product with $N/2$ terms with density $f(x)$ or a product with $N/2$ terms with density $f(1-x)$ (where the existence of $N/2$ such terms follows from the pigeonhole principle). Thus $\lim _{N \to \infty} \mathbb{E}[P_N(s)] = \log_{10} s$, completing the proof.
\end{proof}

\begin{rek} For specific choices of $f$ we can obtain precise bounds on the error. For example, if each cut is chosen uniformly on $(0,1)$, then the densities of the distributions of the $p_i$'s and the $(1-p_i)$'s are the same. By \cite{MN1} or Corollary A.2 of \cite{B--}, \be \E[\varphi_s(X_i)] - \log_{10} s \ \ll \ \frac1{2.9^N}, \ee and thus \be \E[P_N(s)] - \log_{10} s \ \ll \ \frac1{2^N} \sum_{i=1}^{2^N} \frac1{2.9^N} \ = \ \frac1{2.9^N}. \ee %(we may use 10 for the big-Oh constant above).
\end{rek}

%%%%%%%%%%%%%%%%%%%%%%%%%%%%%%%%%%%%%%%%%%%%%%%%%%%%%%%%%%%%%%%%%%%%%%%%%%%%%%%%%%
\subsection{Variance}\label{sec:varianceunrestricted}

\begin{proof}[Proof of Theorem \ref{thm:unrestricted} (Variance)] For ease of exposition we assume all the cuts are drawn from the uniform distribution on $(0,1)$. To facilitate the minor changes needed for the general case, we argue as generally as possible for as long as possible.

We begin by noting that since $\varphi_s(X_i)$ is either 0 or 1, $\varphi_s(X_i)^2 = \varphi_s(X_i)$. From this observation, the definition of variance and the linearity of the expectation, we have
\begin{eqnarray} \label{variance algebra}
{\rm Var}\left(P_N(s)\right) & \ = \ &  \mathbb{E}[P_N(s)^2] - \mathbb{E}[P_N(s)]^2  \nonumber \\ & \ = \ & \mathbb{E}\left[ \left(\frac{\sum_{i = 1}^{2^N} \varphi_s(X_i)}{2^N}\right)^2 \right] -  \mathbb{E}[P_N(s)]^2 \nonumber \\ & \ = \ & \mathbb{E}\left[ \frac{\sum_{i = 1}^{2^N} \varphi_s(X_i)^2}{2^{2N}} + \sum_{i,j = 1 \atop i \neq j}^{2^N} \frac{\varphi_s(X_i) \varphi_s(X_j)}{2^{2N}} \right]  -  \mathbb{E}[P_N(s)]^2  \nonumber\\ & = & \frac1{2^N} \mathbb{E}\left[P_N(s)\right] + \frac{1}{2^{2N}}\left(\sum_{i, j = 1 \atop  i \neq j}^{2^N} \mathbb{E}[\varphi_s(X_i) \varphi_s(X_j)] \right) -  \mathbb{E}[P_N(s)]^2.\ \ \ \ \ \ \ \
\end{eqnarray}

From \S\ref{unrestricted expval}, $\mathbb{E}[P_N(s)] = \log_{10} s + o(1)$. Thus
\begin{eqnarray}\label{var}
{\rm Var}\left(P_N(s)\right) & \ = \ &  \frac{1}{2^{2N}}\left( \sum_{i,j = 1 \atop i \neq j}^{2^N} \mathbb{E}[\varphi_s(X_i) \varphi_s(X_j)] \right) - \log_{10}^2 s + o(1).
\end{eqnarray}

The problem is now reduced to evaluating the cross terms over all $i \ne j$. \emph{This is the hardest part of the analysis, and it is not feasible to evaluate the resulting integrals directly.} Instead, for each $i$ we partition the pairs $(X_i,X_j)$ based on how `close' $X_j$ is to $X_i$ in our tree (see Figure \ref{fig:tree3levels}). We do this as follows. Recall that each of the $2^N$ pieces is a product of the starting length $L$ and $N$ cutting proportions.  Note $X_i$ and $X_j$ must share some number of these proportions, say $k$ terms.  Then one piece has the factor $p_{k+1}$ in its product, while the other contains the factor $(1 - p_{k+1})$.  The remaining $N - k - 1$ elements in each product are independent from each other.  After re-labeling, we can thus express any  $(X_i, X_j)$ pair as
\begin{eqnarray}
X_i  & \ = \ &  L \cdot p_1 \cdot p_2 \dotsm p_k \cdot p_{k+1} \cdot p_{k+2} \dotsm p_{N} \nonumber\\ \label{Xi,Xj}
X_j & \ = \ & L \cdot p_1 \cdot p_2 \dotsm p_k \cdot (1 - p_{k+1}) \cdot \tilde p_{k+2} \dotsm \tilde p_{N}.
\end{eqnarray}

With these definitions in mind, we have
\begin{eqnarray}\label{crosstermexact}
\mathbb{E}[\varphi_s(X_i)\varphi_s(X_j)] &\ = \ &   \int_{p_1=0}^1 \int _{p_2=0}^1 \dotsm \int _{p_{N}=0}^1 \int _{\tilde p_{k+2}=0}^1 \dotsm \int _{\tilde p_{N}=0}^1 \varphi _s \left(  L \prod _{r=1}^{k+1} p_r \prod _{r = k+2} ^{N} p_r \right)  \nonumber \\
& & \ \ \ \ \cdot \  \varphi _s\left( L\prod _{r=1}^{k} p_r \cdot \left( 1-p_{k+1} \right) \cdot \prod _{r = k+2} ^{N} \tilde p_r \right) \nonumber \\
& & \ \ \ \ \cdot \  \prod _{r=1} ^{N} f (p_r)  \prod _{r=k+2} ^{N} f (1-\tilde p_r) \  dp_1 dp_2 \cdots dp_N d\tilde p_{k+2}\cdots d\tilde p_{N}.
\end{eqnarray}

%In the statement above we have integrated over the remaining $2^N-N-(N-M-1) = 2^N-2N+M+1$ variables; since those variables do not appear in any of the cuts in $X_i$ or $X_j$, their corresponding integrals are 1.

The difficulty in understanding \eqref{crosstermexact} is that many variables occur in both $\varphi_s(X_i)$ and $\varphi_s(X_j)$. The key observation is that most of the time there are many variables occurring in one but not the other, which minimizes the effects of the common variables and essentially leads to evaluating $\varphi_s$ at almost independent arguments. We make this precise below, keeping track of the errors. Define
\bea
L_1 \ :=\ L\left(\prod_{r=1}^k p_r\right) p_{k+1}, \ \ \ \ \ \ L_2 \ :=\ L\left(\prod_{r=1}^k p_r\right) (1-p_{k+1}),
\eea and consider the following integrals:
\begin{eqnarray}
I(L_1) &  :=  & \int _{ p_{k+2} = 0} ^{1} \cdots \int _{ p_{N} = 0} ^{1} \varphi _s \left( L_1 \prod _{r = k+2} ^{N} p_r \right) \prod _{r = k+2} ^{N} f (p_r) \   dp_{k+2}dp_{k+3}\cdots dp_N \nonumber\\
J(L_2) & \ :=\  & \int _{ \tilde p_{k+2} = 0} ^{1} \cdots \int _{\tilde p_{N} = 0} ^{1} \varphi _s \left( L_2 \prod _{r = k+2} ^{N} \tilde p_r \right) \prod _{r = k+2} ^{N} f (\tilde p_r) \  d\tilde p_{k+2}d \tilde p_{k+3}\cdots d \tilde p_N.\nonumber\\
\end{eqnarray}
%which are defined for all $L_1,L_2 \in [1,10)$.

We show that, for any $L_1, L_2$, we have $|I(L_1)J(L_2)-(\log_{10} s)^2| = o(1)$. Once we have this, then all that remains is to integrate $I(L_1)J(L_2)$ over the remaining $k+1$ variables. The rest of the proof follows from counting, for a given $X_i$, how many $X_j$'s lead to a given $k$.

It is at this point where we require the assumption about $f(x)$ from the statement of the theorem, namely that $f(x)$ and $f(1-x)$ satisfy \eqref{eq:summeltransformsnozero}.  For illustrative purposes, we assume that each cut $p$ is drawn from a uniform distribution, meaning $f(x)$ and $f(1-x)$ are the probability density functions associated with the uniform distribution on $(0,1)$.  The argument can readily be generalized to other distributions; we choose to highlight the uniform case as it is simpler, important, and we can obtain a very explicit, good bound on the error.

Both $I(L_1)$ and $J(L_2)$ involve integrals over $N-k-1$ variables; we set $n := N - k - 1$.  For the case of a uniform distribution, equation (3.7) of \cite{JKKKM} (or see Corollary A.2 in \cite{B--}) gives for $n \ge 4$ that\footnote{Our situation is slightly different as we multiply the product by $L_1$; however, all this does is translate the distribution of the logarithms by a fixed amount, and hence the error bounds are preserved.}
\begin{equation}
\left|I(L_1) - \log_{10} s\right|  \ < \ \left( \frac{1}{2.9^n} +  \frac{ \zeta (n) - 1}{2.7^n} \right) 2 \log_{10} s,
\end{equation}
where $\zeta(s)$ is the Riemann zeta function, which for $\text{Re}(s)>1$ equals $\sum_{n=1}^{\infty} 1 / n^s$.
Note that for all choices of $L_1$, $I(L_1) \in [0,1)$, and for $n \le 4$ we may simply bound the difference by 1.  It is also important to note that for $n > 1$, $\zeta(n) - 1$ is  $O\left(1/2^n\right)$, and thus the error term decays very rapidly.

A similar bound exists for $J(L_2)$, and we can choose a constant $C$ such that
\begin{eqnarray}
 | I(L_1) \ - \ \log_{10} s | \ \leq\  \frac{C}{2.9^n}, \ \ \ \ \ \  | J(L_2) \ - \ \log_{10} s | \ \le \  \frac{C}{2.9^n}
\end{eqnarray}
for all $n, L_1, L_2$. Because of this rapid decay, by the triangle inequality it follows that
\begin{equation}
\left|I(L_1) \cdot J(L_2) \ - \ (\log_{10} s)^2\right| \ \leq \   \frac{2C}{2.9^n}.
\end{equation}

For each of the $2^N$ choices of $i$, and for each $1 \leq n \leq N$, there are $2^{n-1}$ choices of $j$ such that $X_j$ has exactly $n$ factors not in common with $X_i$.  We can therefore obtain an upper bound for the sum of the expectation cross terms by summing the bound obtained for $2^{n-1}I(L_1) \cdot J(L_2)$ over all $n$ and all $i$:
\begin{eqnarray}
\left|\sum_{i,j=1 \atop i \neq j}^{2^N} \left(\mathbb{E}[\varphi_s(X_i)\varphi_s(X_j)] - \log_{10}^2 s\right)\right| & \ \le \ & \sum_{i=1}^{2^N}\sum_{n = 1}^{N} 2^{n-1} \frac{2C}{2.9^n}  \ \le \ 2^N \cdot 4C.
\end{eqnarray}

Substituting this into equation \eqref{var} yields
\begin{eqnarray}
{\rm Var}\left(P_N(s)\right) &  \ \leq \ &  \frac{4C}{2^N} + o(1).
\end{eqnarray}

Since the variance must be non-negative by definition, it follows that $\lim _{N \to \infty} {\rm Var}\left(P_N(s)\right) = 0$, completing the proof if each cut is drawn from a uniform distribution. The more general case follows analogously, appealing to \cite{MN1} (or Theorem A.1 of \cite{B--}).\end{proof}
%%%%%%%%%%%%%%%%%%%%%%%%%%%%%%%%%%%%%%%%%%%%%%%%%%%%%%%%%%%%%%%%%%%%%%%%%%%%%%%%%%%%%%%%%%%%%%%%%%%%%%%%%%%%%%%%%%%%%%%%%%%%%%%%%%%%%%%%%%%%%%%%%%%%
%%%%%%%%%%%%%%%%%%%%%%%%%%%%%%%%%%%%%%%%%%%%%%%%%%%%%%%%%%%%%%%%%%%%%%%%%%%%%%%%%%%%%%%%%%%%%%%%%%%%%%%%%%%%%%%%%%%%%%%%%%%%%%%%%%%%%%%%%%%%%%%%%%%%
%Discrete Fragmentation
%%%%%%%%%%%%%%%%%%%%%%%%%%%%%%%%%%%%%%%%%%%%%%%%%%%%%%%%%%%%%%%%%%%%%%%%%%%%%%%%%%%%%%%%%%%%%%%%%%%%%%%%%%%%%%%%%%%%%%%%%%%%%%%%%%%%%%%%%%%%%%%%%%%%

\section{Proof of Theorem \ref{thm:discrete}: Discrete Decomposition} \label{sec:discrete}
The main idea of the proof is to show that fragments generated for sufficiently large rods can be well approximated by a corresponding continuous fragmentation process in a way that preserves Benford behavior. We formalize this notion in the following lemma.

\begin{lemma}\label{approx}
Suppose that the random integer on $[1,\ell_k]$ is generated first by selecting a random real number $c_k \in [0,1]$, then rounding it up to the nearest multiple of $1/\ell_k$. Let $\mathcal{Q}$ denote the continuous process in which we start with a piece of length $\ell$, and fracture it in each iteration at $c_k$.

Let $X_k$ denote the $k$\textsuperscript{th} fragment generated $\mathcal{P}$, and $Y_k$ denote the $k$\textsuperscript{th} fragment generated by $\mathcal{Q}$. Then
\begin{equation}\label{eqapprox}
Y_k\ \leq\ X_k \left(1+\prod_{j=1}^{k}\left(1+\frac{1}{\ell_{j}}\right)\right)+O(1),
\end{equation}
where $\ell_j$ denote the remaining length of the fragment in process $\mathcal{P}$ after $j$ breaks.
\end{lemma}

In practice, we use the following corollary of this lemma.

\begin{cor}\label{corap}  Suppose $\ell_{k-1},X_k>\log^2(\ell)$ then for $g(\ell)=o(\sqrt{\log(\ell)})$, such that $g(\ell)$ tends to infinity with $\ell$ and for $k$ such that  $k<g(\ell)\log(\ell),$
 \be X_k\ = \ Y_k(1+o(1)).\ee
 \end{cor}
 We first show the corollary contingent on Lemma \ref{approx}.
 \begin{proof}[Proof of Corollary \ref{corap}]
 Using that the $\ell_k$ are monotonically decreasing, we can tightly bound how close $X_k$ and $Y_k$ are:
\be
 Y_k\ \leq\ X_k\ \ll\ Y_k \prod_{j=1}^{k}\left(1+\frac{1}{\ell_{j}}\right)+O(1). \\
 \ee
 Here we bound the terms uniformly above by the largest term in our assumptions, raised to the maximum number of fragments allowed in our hypothesis giving,
 \begin{align*}
 X_k &\ \ll\ Y_k\left(1+\frac{1}{\log^2(\ell)}\right)^{g(\ell)\log(\ell)}+O(1) \\
 &\ \ll\ Y_k e^{\frac{g(\ell)}{\log(\ell)}} +O(1),
 \end{align*}
where the second line follows by the limit definition of $e$ since we are asymptotic in $\ell$.  Taking the limit in $\ell$ and combining error terms gives the desired bound.
  \end{proof}

We now turn to the lemma.

\begin{proof}[Proof of Lemma \ref{approx}] Let $h_k$ denote the length of the fragments in process $\mathcal{Q}$ after $k$ breaks, $c_k$ denote the continuous value on chosen on $[1,h_k)$ and $d_k$ denote the rounded version of $c_k$ used in process $\mathcal{P}$.  Then we have $X_{k+1}=\ell_{k}(1-d_{k})$, $Y_{k+1}=h_{k}(1-c_{k})$ and $d_{k}<c_k<d_{k}+\frac{1}{\ell_{k-1}}$.

To prove the lemma, it suffices to show that $\ell_k\leq h_k\prod_{j=1}^{k-1}(1+\frac{1}{\ell_{j}})$, and then absorb the error from the final cut into the $O(1)$ term. Note
\be
h_k\ \leq\ \ell_k\ = \ \ell \prod_{i=1}^k d_i\ \leq\ \ell \prod_{i=1}^k \left(c_i+ \frac{1}{\ell_{i-1}}\right).
\ee
Then since $\ell_{k+1}=d_k\ell_k<c_k\ell_k$, factoring out $c_i$ from each term yields
\be\ell \prod_{i=1}^k \left(c_i+ \frac{1}{\ell_{i-1}}\right)\ \leq\ \ell \prod_{i=1}^k c_i \prod_{i=1}^k \left(1+ \frac{1}{\ell_{i}}\right)
\ \leq\ h_k \prod_{i=1}^k \left(1+ \frac{1}{\ell_{i}}\right).\ee
 \end{proof}

We now note that if we can show that almost all of the fragments satisfy the properties laid out in Corollary \ref{corap}, this will complete the proof of Theorem \ref{thm:discrete}, as the pieces generated by $\mathcal{Q}$ are strong Benford distributed by Theorem \ref{thm:unrestricted}.

\begin{lemma}\label{strongben}
Suppose $\{Y_\ell\}_\ell=\{Y_1,\dots, Y_{k_\ell}\}$ is strong Benford as $\ell$ tends to infinity. Then any set $\{X_\ell\}_\ell=\{X_1,\dots,X_{k_\ell}\}$ such that $X_{i}=Y_i(1+o(1))$ is strong Benford as $\ell$ tends to infinity.
\end{lemma}

\begin{proof}
We fix a digit $j$ and show that the distribution of the $j$\textsuperscript{th} digit is Benford. Let $D_j(a)$ denote the $j$\textsuperscript{th} digit of $a$. Then since $X_{i}=Y_i(1+o(\ell))$, there exists some function of $\ell$ tending to infinity with $\ell$ such $D_j(X_i)= D_j(Y_i)$, unless $D_{j-k}(Y_i)=9$ or $D_{j-k}(X_i)=9$ for all $k\leq f(\ell)$. Since the $Y_i$ are strong Benford and are therefore Benford in each $(j-k)$\textsuperscript{th} digit, this occurs a vanishing percentage of the time.
\end{proof}

We thus turn our attention to estimating the number of fragments generated by a piece of length $\ell$. The two following results give the necessary approximations.

\begin{lemma}\label{numpieces}
Let $F_\ell$ denote the number of fragments generated by a piece of length $\ell$. Then as $\ell \to \infty$,
 \be\mathbb{P}\left((\log(\log(\ell)))^2<F_\ell<\log(\ell)g(\ell)\right) \ = \ 1-o(1).\ee
\end{lemma}

\begin{cor}\label{dense}
Define $\ell_k$ to be the length of the rod after $k$ iterations of the process. Consider $Y':=\{y_k: \ell_k \gg \log^3(\ell)\}$. Then with probability tending to $1$,
\be \lim_{\ell\to\infty}\frac{|Y'|}{|Y|} \ = \ 1.\ee
\end{cor}
We show how Corollary \ref{dense} follows from \ref{numpieces}.

\begin{proof}[Proof of Corollary \ref{dense}]
From the upper bound on $F_\ell$, $|X\backslash X'| \ll \log(\log^3(\ell))g(\log^3(\ell))$. Thus employing our lower bound on $Y$ directly,
\be\frac{|X\backslash X'|}{|X|}\ \ll\ \frac{\log(\log(\ell))g(\log^3(\ell))}{(\log(\log(\ell)))^2}\ \ll\ \frac{g(\log^3(\ell))}{\log(\log(\ell))}.\ee
Taking $g$ such that $g(u)=o(\log(u))$ completes the proof.
\end{proof}

All that remains is to prove Lemma \ref{numpieces}.

\begin{proof}[Proof of Lemma \ref{numpieces}]
We first prove the upper bound.  We do this by finding the expected value of $F_\ell$ and applying Markov's inequality. We take the inductive hypothesis that $\mathbb{E}[F_1]=\sum_{j\leq \ell}\frac{1}{j}$, with the base case $\mathbb{E}[F_1]=1$ being clear. Since in the first break we get a single piece of length $\ell-s$ for some $s>0$, we have the recurrence relation
\be \mathbb{E}[F_\ell] \ =\ \frac{1}{\ell}+\frac{1}{\ell}\sum_{s<\ell}(1+\mathbb{E}[F_s]).\ee
Therefore, by the inductive hypothesis, we have that
\be \mathbb{E}[F_\ell]\ =\ \frac{1}{\ell}+\frac{1}{\ell}\sum_{s=1}^{\ell-1}\left(1+\sum\limits_{i=1}^s\frac{1}{i}\right).\ee

Each summand of the form $1/i$ appears in $\ell-i$ of the sums, so that
\be\frac{1}{\ell}+\frac{1}{\ell}\sum\limits_{s=1}^{\ell-1}\left(1+\sum\limits_{i=1}^s\frac{1}{i}\right)
\ = \ \frac{1}{\ell}+\frac{1}{\ell}\sum\limits_{i=1}^{\ell-1}\frac{i+(\ell-i)}{i} \ = \ \sum\limits_{i=1}^{\ell}\frac{1}{i}\ \sim\ \log(\ell)+O(1). \ee
By Markov's inequality, $P(F_\ell>\log(\ell)g(\ell))=O\left(\frac{1}{g(\ell))}\right)$.

We now prove the lower bound. The probability any of $\log(\log(\ell))^2$ breaks of a piece of length greater than $\ell^{1/2}$ at $1/(\log(\ell))$ of its original length is $o(1)$, since
\be\lim\limits_{\ell \to\infty}\left(1-\frac{1}{\log(\sqrt(\ell))}\right)^{\log(\log(\ell))^2}\ =\ \lim\limits_{\ell \to\infty} e^\frac{2\log(\log(\ell))^2}{\log(\ell)} \ = \ 1. \ee
For $\ell$ sufficiently large, if $\log(\log(\ell))^2$ breaks happen, none of which cut a piece down by a factor of $1/\log(\ell)$, then the piece is of length greater than $\ell^\frac{1}{2}.$ This follows from,
\be
\ell \cdot \left(\frac{1}{\log(\ell)}\right)^{\log(\log(\ell))^2} \ \gg\ \ell^\frac{1}{2}.
\ee
\end{proof}

\begin{proof}[Proof of Theorem \ref{thm:discrete}] The theorem follows by considering the subset $X'$, defined earlier as the set of all $X_k$ such that $\ell_k\geq \log^3(\ell)$, which is strong Benford by Lemma \ref{strongben} and Corollary \ref{corap}. Since with probability tending to one this $X'$ is almost all of $X$ (that is as $\ell \to \infty,$ we have $\frac{|X'|}{|X|}\to 1$), $X$ also exhibits strong Benford behavior asymptotically in $\ell$ with probability tending toward $1$. \end{proof}

%%%%%%%%%%%%%%%%%%%%%%%%%%%%%%%%%%%%%%%%%%%%%%%%%%%%%%%%%%%%%%%%%%%%%%%%%%%%%%%%%%%%%%%%%%%%%%%%%%%%%%%%%%%%%%%%%%%%%%%%%%%%%%%%%%%%%%%%%%%%%%%%%%%%
%%%%%%%%%%%%%%%%%%%%%%%%%%%%%%%%%%%%%%%%%%%%%%%%%%%%%%%%%%%%%%%%%%%%%%%%%%%%%%%%%%%%%%%%%%%%%%%%%%%%%%%%%%%%%%%%%%%%%%%%%%%%%%%%%%%%%%%%%%%%%%%%%%%%
%					   PROOF OF RESTRICTED DECOMPOSITION
%%%%%%%%%%%%%%%%%%%%%%%%%%%%%%%%%%%%%%%%%%%%%%%%%%%%%%%%%%%%%%%%%%%%%%%%%%%%%%%%%%%%%%%%%%%%%%%%%%%%%%%%%%%%%%%%%%%%%%%%%%%%%%%%%%%%%%%%%%%%%%%%%%%%
%%%%%%%%%%%%%%%%%%%%%%%%%%%%%%%%%%%%%%%%%%%%%%%%%%%%%%%%%%%%%%%%%%%%%%%%%%%%%%%%%%%%%%%%%%%%%%%%%%%%%%%%%%%%%%%%%%%%%%%%%%%%%%%%%%%%%%%%%%%%%%%%%%%%

\section{Proof of Theorem \ref{restricted thm}: Restricted 1-Dimensional Decomposition} \label{sec:restricted}

As the proof is similar to that of Theorem \ref{thm:unrestricted}, we just highlight the differences below.

\begin{proof}[Proof of Theorem \ref{restricted thm}] We may assume that $L$ = 1 as scaling does not affect Benford behavior. In the analysis below we may ignore the contributions of $X_i$ for $i \in [1,\log N]$ and all pairs of $X_i, X_j$ such that $X_i$ and $X_j$ do not differ by at least $\log N$ proportions. Removing these terms does not affect whether or not the resulting stick lengths converge to Benford (because $\log N/N \to 0$ as $N\to\infty$), but does eliminate strong dependencies or cases with very few products, both of which complicate our analysis.

As before, to prove that the stick lengths tend to Benford as $N \to \infty$ we show that $\E[P_s(N)]\to\log_{10} s$ and $\var{P_s(N)}\to 0$. The first follows identically as in \S\ref{unrestricted expval}. We have
\bea
\E[\varphi_s(x_i)] \ = \  \int_{p_1=0}^1\int_{p_2=0}^1\cdots\int_{p_i=0}^1 \varphi\left((1-p_i)\prod_{l=1}^{i-1}p_l\right) f(1-p_i)\prod_{l=1}^{i-1}f(p_l)dp_1 dp_2 \cdots dp_i \eea
tends to $\log_{10} s+o(1)$ as $n \to \infty$.

For the variance, we now have $N$ and not $2^N$ pieces, and find
\bea\label{eq:var}
\var{P_N(s)} \ = \   \frac{\log_{10}s}{N^2} + \frac{1}{N^2} \left(\sum\limits_{\substack{i, j =1 \\ i \neq j}}^{N}\E [\varphi_s(X_i) \varphi_s(X_j)] \right) - \log_{10}^2 s+o(1).
\eea Note that we may replace the above sum with twice the sum over $i<j$. Further, let $\mathcal{A}$ be the set \be \mathcal{A} \ := \ \{(i ,j) : \log N \leq i \leq j-\log N \leq N - \log N\}. \ee We may replace the sum in \eqref{eq:var} with twice the sum over pairs in $\mathcal{A}$, as the contribution from the other pairs is $o(1)$. The analysis is thus reduced to bounding
\bea
2\sum_{(i,j) \in \mathcal{A}} \E [\varphi_s(X_i) \varphi_s(X_j)],
\eea
where
\bea
X_i &\ = \ &  p_i \prod_{r = 1}^{i-1} (1-p_r)\nonumber \\
X_j &\ = \ &  p_j \prod_{r = 1}^{i-1} (1-p_r) \prod_{r =i}^{j-1} (1-p_r);
\eea
we write $X_i$ and $X_j$ in this manner to highlight the terms they have in common. Letting $f(x)$ be the density for the uniform distribution on the unit interval, we have
\bea
\E [\varphi_s(X_i) \varphi_s(X_j)] &\ = \ &  \int_{p_1 =0}^1 \cdots \int_{p_i=0}^1 \cdots \int_{p_j=0}^1 \varphi_s\left(p_i \prod_{r = 1}^{i-1} (1-p_r)\right) \nonumber\\ & & \ \ \ \ \ \cdot \varphi_s\left(p_j \prod_{r=1}^{i-1}(1-p_r)\prod_{r = i}^{j-1} (1-p_r)\right)  \cdot  \prod_{r = 1}^{j}f (p_r)  dp_1\cdots dp_j.\ \ \
\eea

The analysis of this integral is similar to that in the previous section. Let \be \mathcal{L}_i\ :=\ \prod_{r=1}^{i-1} (1-p_r), \ \ \ \ \ \ \mathcal{L}_j = \prod_{r=i+1}^{j-1}(1-p_r).\ee That is, $\mathcal{L}_i$ consists of the terms shared by $X_i$ and $X_j$, and $\mathcal{L}_j$ is the product of the terms only in $X_j$. We are left with showing that the integral \be \int_{p_1=0}^\infty \cdots \int_{p_j=0}^\infty \varphi_s\left(\mathcal{L}_i p_i\right) \varphi_s\left(\mathcal{L}_i (1-p_i) \mathcal{L}_j p_j\right) \cdot  \prod_{r = 1}^{j}f (p_r)  dp_1\cdots dp_j \ee is close to $\log_{10}^2 s$.

We highlight the differences from the previous section. The complication is that here $\mathcal{L}_1$ appears in both arguments, while before it only occurred once. This is why we restricted our pairs $(i,j)$ to lie in $\mathcal{A}$. Since we assume $i \ge \log N$, there are a lot of terms in the product of $\mathcal{L}_1$, and by the results of \cite{JKKKM} (or see Appendix A of \cite{B--}) the distribution of $\mathcal{L}_1$ converges to Benford. Similarly, there are at least $\log N$ new terms in the product for $\mathcal{L}_2$, and thus $\mathcal{L}_i (1-p_i) \mathcal{L}_j p_j$ converges to Benford. An analysis similar to that in \S\ref{sec:unrestricted} shows that the integral is close to $\log_{10}^2 s$ as desired. The proof is completed by noting that the cardinality of $\mathcal{A}$ is $N^2/2 + O(N \log N)$. Substituting our results into \eqref{eq:var}, we see the variance tends to 0. Thus the distribution of the leading digits converges in distribution to Benford's Law.
\end{proof}

%%%%%%%%%%%%%%%%%%%%%%%%%%%%%%%%%%%%%%%%%%%%%%%%%%%%%%%%%%%%%%%%%%%%%%%%%%%%%%%%%%%%%%%%%%%%%%%%%%%%%%%%%%%%%%%%%%%%%%%%%%%%%%%%%%%%%%%%%%%%%%%%%%%%
%%%%%%%%%%%%%%%%%%%%%%%%%%%%%%%%%%%%%%%%%%%%%%%%%%%%%%%%%%%%%%%%%%%%%%%%%%%%%%%%%%%%%%%%%%%%%%%%%%%%%%%%%%%%%%%%%%%%%%%%%%%%%%%%%%%%%%%%%%%%%%%%%%%%
%					   PROOF OF FIXED PROPORTION
%%%%%%%%%%%%%%%%%%%%%%%%%%%%%%%%%%%%%%%%%%%%%%%%%%%%%%%%%%%%%%%%%%%%%%%%%%%%%%%%%%%%%%%%%%%%%%%%%%%%%%%%%%%%%%%%%%%%%%%%%%%%%%%%%%%%%%%%%%%%%%%%%%%%
%%%%%%%%%%%%%%%%%%%%%%%%%%%%%%%%%%%%%%%%%%%%%%%%%%%%%%%%%%%%%%%%%%%%%%%%%%%%%%%%%%%%%%%%%%%%%%%%%%%%%%%%%%%%%%%%%%%%%%%%%%%%%%%%%%%%%%%%%%%%%%%%%%%%

\section{Proof of Theorem \ref{fixed proportion thm}: Fixed 1-Dimensional Proportion Decomposition} \label{sec:fixedprop}
Recall that we are studying the distribution of the stick lengths that result from cutting a stick at a fixed proportion $p$. We define $y$ by $10^y := \frac{1-p}{p}$, the ratio between adjacent piece lengths. The resulting behavior is controlled by the rationality of $y$. We see this clearly in the three examples in Figures \ref{fig:rational311} through \ref{fig:notrational113310}, where we show observed behavior plotted against Benford behavior.

\begin{figure}[h]
\begin{center}
\scalebox{.75}{\includegraphics{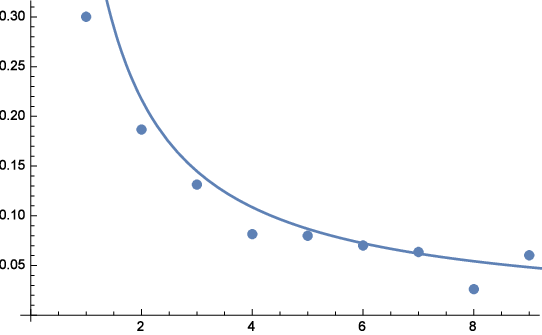}}\
\scalebox{.75}{\includegraphics{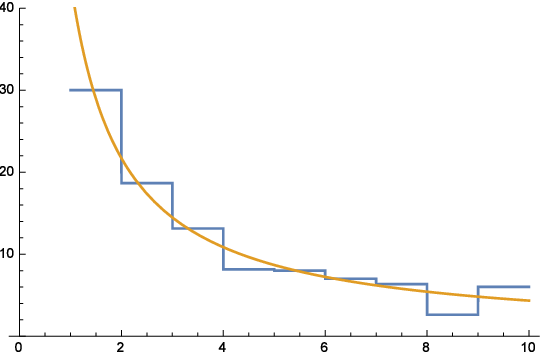}}
\caption{\label{fig:rational311} Irrational case: $p = 3/11$, 1000 levels; $y = \log_{10}(8/3) \not\in \mathbb{Q}$.}
\end{center}\end{figure}

\begin{figure}[h]
\begin{center}
\scalebox{.75}{\includegraphics{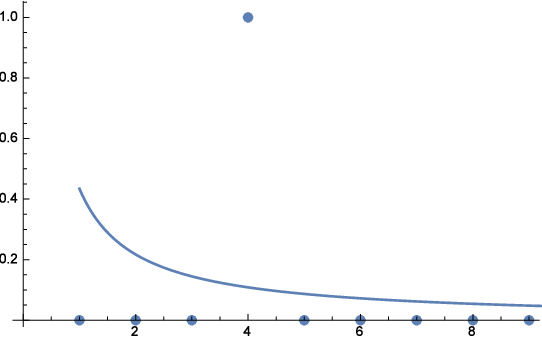}}\ \scalebox{.75}{\includegraphics{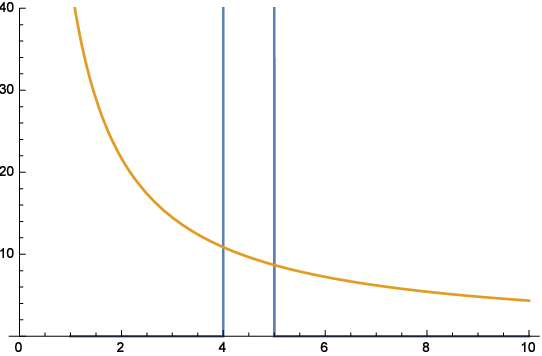}}
\caption{\label{fig:notrational111} Rational case: $p = 1/11$, 1000 levels; $y = 1 \in \mathbb{Q}$.}
\end{center}\end{figure}

\begin{figure}[h]
\begin{center}
\scalebox{.75}{\includegraphics{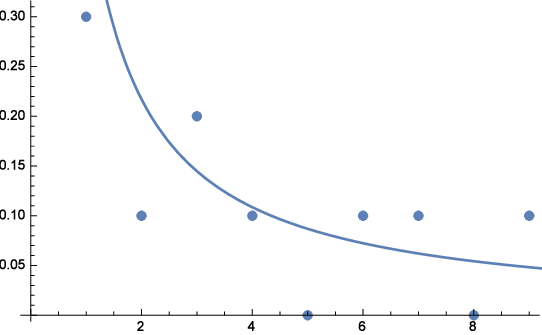}}\ \scalebox{.75}{\includegraphics{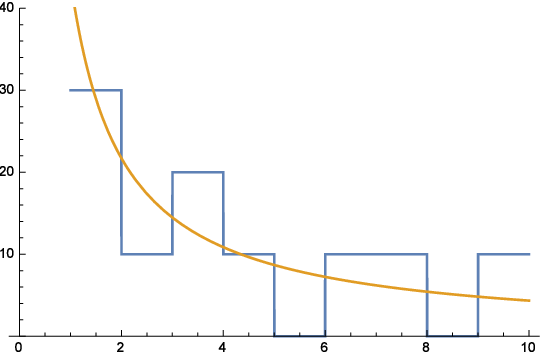}}
\caption{\label{fig:notrational113310} Rational case: $p = 1/(1 + 10^{33/10})$, 1000 levels; $y = 33/10 \in \mathbb{Q}$.}
\end{center}\end{figure}

%%%%%%%%%%%%%%%%%%%%%%%%%%%%%%%%%%%%%%%%%%%%%%%%%%%%%%%%%%%%%%%%%%%%%%%%%%
%					  Y RATIONAL
%%%%%%%%%%%%%%%%%%%%%%%%%%%%%%%%%%%%%%%%%%%%%%%%%%%%%%%%%%%%%%%%%%%%%%%%%%

\subsection{Case I: $y\in\Q$.} \label{y rational}

Let $y=r/q$. Here $r\in\Z, q\in\N$ and $\gcd(r,q)=1$. Let $S_{10}(x_j)$ denote the first digit of $x_j$. As
\bea
x_{j+q} \ = \   \left(\frac{1-p}{p}\right)^{q}x_j \ = \  (10^y)^q x_j \ = \  10^r x_j, \eea it follows that \bea
S_{10}(x_{j+q})&\ = \ & S_{10}(x_j).
\eea
Thus the significand of $x_j$ repeats every $q$ indices.\footnote{We are interested in determining the frequency with which each leading digit occurs. It is possible that two sticks $x_j$ and $x_i$ are not a multiple of $q$ indices apart but still have the same leading digit. Thus summing the frequency of every $q$\textsuperscript{th} length tells us that for each digit $d$ the probability of a first digit $d$ is $a /q$ for some $a \in \N$.} We now show that the $q$ different classes of leading digits occur equally often as $N\to\infty$.

To do this, we use the multisection formula. Given a power series $f(x) = \sum_{k = 0}^{\infty} a_{k} x^{k}$, we can take the multisection $\sum_{\ell = 0}^{\infty} a_{\ell q + j} x^{\ell q + j}$, where $j$ and $q$ are integers with \ $0 \leq j < q$. The multisection itself is a power series that picks out every $q$\textsuperscript{th} term from the original series, starting from the $j$\textsuperscript{th} term. We have a closed expression for the multisection in terms of the original function (see \cite{Che} for a proof of this formula):
\bea\sum_{\ell = 0}^{\infty} a_{\ell q + j} x^{\ell q + j} &\ = \ &  \frac{1}{q} \sum_{s = 0}^{q - 1} \omega^{- j s} f(\omega^s x), \eea
where $\omega=e^{2 \pi i / q}$ is a primitive $q$\textsuperscript{th} root of unity. We apply this to $f(x) = (1 + x)^{N} = \sum_{k = 0}^{N} {N \choose k} x^k$. To extract the sum of equally spaced binomial coefficients, we take the multisection of the binomial theorem with $x = 1$:
\bea\sum_{\ell} {N \choose \ell q + j} &\ = \ &  \frac{2^{N}}{q}  \sum_{s = 0}^{q - 1} \left( \cos \frac{\pi s}{q} \right) ^{N} \cos \frac{\pi (N- 2 j) s}{q};
\eea note in the algebraic simplifications we took the real part of $\omega^{(N-2j)/2}$, which is permissible as the left hand side is real and therefore the imaginary part sums to zero.

All terms with index $j$ mod $q$ share the same leading digit. Therefore the probability of observing a term with index $j \bmod q$ is given by
\bea\label{eq:multisectionjmodq}
\frac{1}{2^N}\left[{N \choose j}+{N \choose j+q}+{N\choose j+2q}+\dotsb\right] &\ = \ &  \frac{1}{q}\sum_{s=0}^{q-1} \left(\cos \frac{\pi s }{q}\right)^N\cos\frac{\pi(N-2j)s}{q} \nonumber\\
&\ = \ & \frac{1}{q}\left(1+\sum_{s=1}^{q-1} \left(\cos \frac{\pi s }{q}\right)^N\cos\frac{\pi(N-2j)s}{q}\right) \nonumber\\
&\ = \ & \frac{1}{q}\left(1 + {\rm Err} \left[ (q - 1) \left( \cos \frac{\pi}{q}\right)^N\right] \right),
\eea
where ${\rm Err}[X]$ indicates an absolute error of size at most $X$. When $q = 1$, the term inside the ${\rm Err}$ vanishes. For $q \in \N$, $q > 1$, $\cos (\pi/q) \in [0,1)$; as that value is raised to the $N$\textsuperscript{th} power, it approaches 0 exponentially fast. As $N\to\infty$, the term inside the ${\rm Err}$ disappears, leaving us $1/q$. Hence the probability of observing a particular leading digit converges to a multiple of $1/q$, which is a rational number. On the other hand, the probability from the Benford distribution is $\log_{10} (1+1/d)$ which is an irrational number. Therefore the described cutting process does not result in perfect Benford behavior. \hfill $\Box$

\begin{rek} Instead of using the multisection formula, we could use the monotonicity (as we move towards the middle) to show that the different classes of $j \bmod q$ have approximately the same probability by adding or removing the first and/or last term in the sequence, which changes which class dominates the other. We chose this approach as the multisection formula is useful in the proof of Theorem \ref{fixed proportion thm} when the irrationality exponent of $y$ is finite.\end{rek}

%%%%%%%%%%%%%%%%%%%%%%%%%%%%%%%%%%%%%%%%%%%%%%%%%%%%%%%%%%%%%%%%%%%%%%%%%%
%					  Y IRRATIONAL
%%%%%%%%%%%%%%%%%%%%%%%%%%%%%%%%%%%%%%%%%%%%%%%%%%%%%%%%%%%%%%%%%%%%%%%%%%
\subsection{Case II: $y\notin\Q$ has finite irrationality exponent}\label{y irrational} We prove the leading digits of the $2^N$ stick lengths are Benford by showing that the logarithms of the piece lengths are equidistributed modulo 1 (Benford's Law then follows by simple exponentiation; see \cite{Dia,MT-B}). The frequency of the lengths $x_j$ follow a binomial distribution with mean $N /2$ and standard deviation $\sqrt{N}/2$. As the argument is long we briefly outline it. First we show that the contributions from the tails of the binomial distribution are small. We then break the binomial distribution into intervals that are a power of $N$ smaller than the standard deviation, and show both that the probability density function does not change much in each interval and that the logarithms of the lengths in each interval are equidistributed modulo 1.

Specifically, choose a $\delta \in (0, 1/2)$; the actual value depends on optimizing various errors. Note that $N^\delta \ll \sqrt{N}/2$, the standard deviation. Let \be x_\ell \ := \ \frac{N}{2}+ \ell N^\delta, \ \ \ \ \ \  x_{\ell,i} \  = \ \frac{N}{2}+ \ell N^\delta + i, \ \ \ \ \ \ I_\ell \ :=\ \{x_\ell, x_\ell+1, \dots, x_\ell+N^\delta-1\}.\ee There are $N/N^\delta=N^{1-\delta}$ such intervals. By symmetry, it suffices to just study the right half of the binomial.

%%%%%%%%%%%%%%%%%%%%%%%%%%%%%%%%%%%%%%%%%%%%%%%%%%%%%%%%%%%%%%%%%%%%%%%%%%
%					  TRUNCATION
%%%%%%%%%%%%%%%%%%%%%%%%%%%%%%%%%%%%%%%%%%%%%%%%%%%%%%%%%%%%%%%%%%%%%%%%%%

\subsubsection{Truncation} \label{truncation}

Instead of considering the entire binomial distribution, for any $\epsilon > 0$ we show that we may truncate the distribution and examine only the portion that is within $N^\epsilon$ standard deviations of the mean. Recall that we are only considering the right half of the binomial as well.

For $\epsilon>0$, Chebyshev's Inequality\footnote{While we could get better bounds by appealing to the Central Limit Theorem, Chebyshev's Inequality suffices.} gives that the proportion of the density that is beyond $N^\epsilon$ standard deviations of the mean is
\bea
{\rm Prob}\left(\left |x-\frac{N}{2} \right |\ge N^\epsilon N^{1/2}/2\right)\ \le \ \frac{1}{N^{2\epsilon}}.
\eea
As $N$ tends to infinity this probability becomes negligible, and thus we are justified in only considering the portion of the binomial from $\frac{N}{2}- N^{\frac{1}{2} + \epsilon}$ to $\frac{N}{2}+ N^{\frac{1}{2} + \epsilon}$. Thus $\ell$ ranges from $-N^{\frac{1}{2}-\delta+\epsilon}$ to $N^{\frac{1}{2}-\delta+\epsilon}$.

%%%%%%%%%%%%%%%%%%%%%%%%%%%%%%%%%%%%%%%%%%%%%%%%%%%%%%%%%%%%%%%%%%%%%%%%%%
%					   ROUGHLY CONSTANT PROBABILITY
%%%%%%%%%%%%%%%%%%%%%%%%%%%%%%%%%%%%%%%%%%%%%%%%%%%%%%%%%%%%%%%%%%%%%%%%%%

\subsubsection{Roughly Equal Probability Within Intervals}\label{Equal Probability}

Let $x_{\ell}=N/2+\ell N^\delta$. Consider the difference in the binomial coefficients of adjacent intervals, which is related to the difference in probabilities by a factor of $1/2^N$. Note that this is a bound for the maximum change in probabilities in an interval of length $N^\delta$ away from the tails of the distribution. For future summation, we want to relate the difference to a small multiple of either endpoint probability; it is this restriction that necessitated the truncation from the previous subsection. Without loss of generality we may assume $\ell \ge 0$ and we find
\bea
{N \choose x_{\ell}}-{N \choose x_{\ell+1}}&\ = \ & {N \choose \frac{N}{2}+\ell N^\delta}-{N \choose \frac{N}{2}+(\ell+1)N^\delta}\nonumber\\
&\ = \ & \frac{N!}{(\frac{N}{2}+\ell N^\delta)!(\frac{N}{2}-\ell N^\delta)!}-\frac{N!}{(\frac{N}{2}+(\ell+1)N^\delta)!(\frac{N}{2}-(\ell+1)N^\delta)!}\nonumber\\
&\ = \ & \frac{N!(\frac{N}{2}+(\ell+1)N^\delta)!(\frac{N}{2}-(\ell+1)N^\delta)!-N!(\frac{N}{2}+\ell N^\delta)!(\frac{N}{2}-\ell N^\delta)!}{(\frac{N}{2}+\ell N^\delta)!(\frac{N}{2}-\ell N^\delta)!(\frac{N}{2}+(\ell+1)N^\delta)!(\frac{N}{2}-(\ell+1)N^\delta)!}\nonumber\\
&\ = \ & {N \choose \frac{N}{2}+\ell N^\delta}\left[1-\frac{(\frac{N}{2}+\ell N^\delta)!(\frac{N}{2}-\ell N^\delta)!}{(\frac{N}{2}+(\ell+1)N^\delta)!(\frac{N}{2}-(\ell+1)N^\delta)!}\right]\nonumber\\
&\ = \ & {N \choose x_{\ell}}\left[1-\frac{(\frac{N}{2}+\ell N^\delta)!(\frac{N}{2}-\ell N^\delta)!}{(\frac{N}{2}+(\ell+1)N^\delta)!(\frac{N}{2}-(\ell+1)N^\delta)!}\right] \label{eq:equalprob}.
\eea
Notice here that the difference in binomial coefficients is in terms of the probability at the left endpoint of the interval, which allows us to express the difference in probabilities relative to the probability within an interval. Let $\alpha_{\ell;N}=\frac{(\frac{N}{2}+\ell N^\delta)!(\frac{N}{2}-\ell N^\delta)!}{(\frac{N}{2}+(\ell+1)N^\delta)!(\frac{N}{2}-(\ell+1)N^\delta)!}$. We show that $1-\alpha_{\ell;N}\to 0$, which implies the probabilities do not change significantly over an interval. We have

\bea
\alpha_{\ell;N}&\ = \ & \frac{(\frac{N}{2}+\ell N^\delta)!(\frac{N}{2}-\ell N^\delta)!}{(\frac{N}{2}+(\ell+1)N^\delta)!(\frac{N}{2}-(\ell+1)N^\delta)!}\nonumber\\
&\ge&\frac{(\frac{N}{2}-(\ell+1)N^\delta)^{N^\delta}}{(\frac{N}{2}+(\ell+1)N^\delta)^{N^\delta}}\nonumber\\
&\ = \ & \left(\frac{1-\frac{2(\ell+1)}{N^{1-\delta}}}{1+\frac{2(\ell+1)}{N^{1-\delta}}}\right)^{N^\delta}\nonumber\\
\log\alpha_{\ell;N}&\ge&N^\delta\left[\log\left(1-\frac{2(\ell+1)}{N^{1-\delta}}\right)-\log\left(1+\frac{2(\ell+1)}{N^{1-\delta}}\right)\right].
\eea
From Taylor expanding we know $\log(1+u) = -u + u^2/2 - O(u^3)$. Thus letting $u = 2(\ell+1)/N^{1-\delta}$ (which is much less than 1 for $N$ large as $\ell \le N^{\frac12-\delta+\epsilon}$) in the difference of logarithms above we see the linear terms reinforce and the quadratic terms cancel, and thus the error is of size $O(u^3) = O(\ell^3 / N^{3-3\delta}) = O(N^{3\epsilon-3/2})$. Therefore
\bea
\log \alpha_{\ell;N}&\ge&N^\delta\left[-\frac{4(\ell+1)}{N^{1-\delta}}+O(N^{3\epsilon-3/2})\right]\nonumber\\
\alpha_{\ell;N}&\ge&e^{-4(\ell+1)N^{2\delta-1}+O(N^{\delta+3\epsilon-3/2})}\nonumber\\
&\ge&1-4(\ell+1)N^{2\delta-1}+O(N^{\delta+3\epsilon-3/2})\nonumber\\
1-\alpha_{\ell;N}&\ \le \ & 4(\ell+1)N^{2\delta-1}+O(N^{\delta+3\epsilon-3/2}) \label{eq:alpha}.
\eea

Since we have truncated and $\ell \le N^{\frac{1}{2}-\delta+\epsilon}$, this implies $(\ell+1)N^{2\delta -1} \ll N^{\delta+\epsilon - \frac12}$, which tends to zero if $\delta < 1/2 - \epsilon$. Substituting \eqref{eq:alpha} into \eqref{eq:equalprob} yields
\bea
{N \choose x_{\ell}}-{N \choose x_{\ell+1}}&\ = \ & {N \choose \frac{N}{2}+\ell N^\delta}(1-\alpha_{\ell;N})\nonumber\\
&\ \le \ & {N \choose x_{\ell}} \left(4(\ell+1)N^{2\delta-1}+O(N^{\delta+3\epsilon-3/2})\right).
\label{l bound}
\eea
Since $\ell\le N^{1/2-\delta+\epsilon}$, it follows that
\bea
\bigg|{N \choose x_{\ell}}-{N \choose x_{\ell+1}}\bigg| &\ \ll \ &  {N \choose x_{\ell}}\left(N^{-\frac{1}{2}+\delta+\epsilon}+O(N^{\delta+3\epsilon-3/2})\right).
\eea
As $\delta<1/2-\epsilon$, $O(N^{\delta+3\epsilon-3/2})$ is dominated by $N^{-\frac{1}{2}+\delta+\epsilon}$ since $\epsilon$ is small. We have proved

\begin{lem}\label{lem:diffxlxlplusone} Let $\delta \in (0, 1/2 - \epsilon)$ and $\ell \le N^{\frac{1}{2}-\delta+\epsilon}$. Then for any $i \in \{0, 1, \dots, N^\delta\}$ we have
\bea
\left|{N \choose x_{\ell}}-{N \choose x_{\ell,i}}\right|&\ \ll \ &  {N\choose x_{\ell}} N^{-\frac{1}{2}+\delta+\epsilon}.
\eea
\end{lem}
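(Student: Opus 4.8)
The plan is to deduce Lemma~\ref{lem:diffxlxlplusone} directly from the one-step estimate \eqref{eq:alpha}, since $x_{\ell,i}$ for $0 \le i \le N^\delta$ sits between $x_\ell$ and $x_{\ell+1}$ and the hard analytic work has already been done there. First I would use the symmetry ${N \choose m} = {N \choose N-m}$ to reduce to $\ell \ge 0$ (the truncation of \S\ref{truncation} guarantees $|\ell| \le N^{1/2-\delta+\epsilon}$, so nothing is lost); then both $x_\ell = \frac N2 + \ell N^\delta$ and $x_{\ell,i} = \frac N2 + \ell N^\delta + i$ lie in $[\frac N2, N]$, where $m \mapsto {N \choose m}$ is non-increasing. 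Hence ${N \choose x_{\ell,i}} \le {N \choose x_\ell}$ and
\[
\left|{N \choose x_\ell} - {N \choose x_{\ell,i}}\right| \ = \ {N \choose x_\ell}\left(1 - \beta_{\ell,i;N}\right), \qquad \beta_{\ell,i;N} := \frac{{N \choose x_{\ell,i}}}{{N \choose x_\ell}},
\]
so it remains to bound $1 - \beta_{\ell,i;N}$ by $O(N^{-1/2+\delta+\epsilon})$ \emph{uniformly} in $i$.

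Next I would cancel factorials to get the telescoping product
\[
\beta_{\ell,i;N} \ = \ \frac{(\frac N2 + \ell N^\delta)!\,(\frac N2 - \ell N^\delta)!}{(\frac N2 + \ell N^\delta + i)!\,(\frac N2 - \ell N^\delta - i)!} \ = \ \prod_{j=1}^{i} \frac{\frac N2 - \ell N^\delta - j + 1}{\frac N2 + \ell N^\delta + j}.
\]
Every factor lies in $[0,1]$ (using $\ell \ge 0$, $j \ge 1$), and since $i \le N^\delta$ each factor is at least $\frac{\frac N2 - (\ell+1)N^\delta}{\frac N2 + (\ell+1)N^\delta}$; as the product has at most $N^\delta$ terms and that lower bound is itself $\le 1$,
\[
\beta_{\ell,i;N} \ \ge \ \left(\frac{\frac N2 - (\ell+1)N^\delta}{\frac N2 + (\ell+1)N^\delta}\right)^{N^\delta} \ = \ \left(\frac{1 - 2(\ell+1)/N^{1-\delta}}{1 + 2(\ell+1)/N^{1-\delta}}\right)^{N^\delta}.
\]
But the right-hand side is exactly the lower bound for $\alpha_{\ell;N}$ used in the derivation of \eqref{eq:alpha}, so the same Taylor-expansion computation (the linear terms in $\log(1\pm u)$ reinforcing, the quadratic terms cancelling, with $u = 2(\ell+1)/N^{1-\delta} \ll 1$ because $\ell \le N^{1/2-\delta+\epsilon}$) gives $1 - \beta_{\ell,i;N} \le 4(\ell+1)N^{2\delta-1} + O(N^{\delta+3\epsilon-3/2}) \ll N^{-1/2+\delta+\epsilon}$. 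Multiplying back by ${N \choose x_\ell}$ finishes the proof.

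I do not expect a genuine obstacle here: the one thing to be careful about is \emph{uniformity in $i$}, which is precisely why I replace the product $\beta_{\ell,i;N}$ by its worst case over $0 \le i \le N^\delta$ — namely $\alpha_{\ell;N}$ — rather than summing $i$ copies of the one-step bound \eqref{l bound} (which would cost a factor of $i$ and lose the estimate). Once this reduction is in place, everything rides on \eqref{eq:alpha} and the monotonicity of the binomial coefficients on $[\frac N2, N]$, both already available.
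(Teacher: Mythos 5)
Your proposal is correct and follows essentially the same route as the paper: bound the ratio of binomial coefficients from below by $\left(\frac{N/2-(\ell+1)N^\delta}{N/2+(\ell+1)N^\delta}\right)^{N^\delta}$, then Taylor-expand the logarithm exactly as in the derivation of \eqref{eq:alpha}, and use $\ell \le N^{1/2-\delta+\epsilon}$ to get the power $N^{-1/2+\delta+\epsilon}$. The only difference is that the paper establishes the bound for the full-interval difference ${N \choose x_\ell}-{N \choose x_{\ell+1}}$ and then appeals implicitly to monotonicity of $m \mapsto {N \choose m}$ on $[N/2,N]$ to cover all intermediate $i$, whereas you make that interior-point reduction explicit via the telescoping product $\beta_{\ell,i;N}$ and a uniform per-factor bound — a slightly more spelled-out version of the same idea. (A minor shortcut you could have taken: monotonicity directly gives $\beta_{\ell,i;N} \ge \alpha_{\ell;N}$, so no separate telescoping is needed.)
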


%%%%%%%%%%%%%%%%%%%%%%%%%%%%%%%%%%%%%%%%%%%%%%%%%%%%%%%%%%%%%%%%%%%%%%%%%%
%					  EQUIDISTRIBUTION
%%%%%%%%%%%%%%%%%%%%%%%%%%%%%%%%%%%%%%%%%%%%%%%%%%%%%%%%%%%%%%%%%%%%%%%%%%

\subsubsection{Equidistribution}\label{Equidistribution} We first do a general analysis of equidistribution of a sequence related to our original one, and then show how this proves our main result.

Given an interval \be I_\ell \ :=\ \left\{x_{\ell,i}: \frac{N}{2}+\ell N^\delta, \dots,   \frac{N}{2}+(\ell+1) N^\delta -1\right\},\ee we prove that $\log (x_{\ell,i})$ becomes equidistributed modulo 1 as $N\to\infty$. Fix an $(a,b) \subset (0,1)$. Let $J_\ell(a,b)\subset \{0,1,\dots, N^\delta\}$ be the set of all $i \in I_\ell$ such that $\log(x_{\ell,i}) \bmod 1\in (a,b)$; we want to show its measure is $(b-a) N^{\delta}$ plus a small error. As the $x_{\ell,i}$ form a geometric progression with common ratio $r = \frac{1-p}{p} = 10^y$, their logarithms are equidistributed modulo 1 if and only if $\log r = y$ is irrational (see \cite{Dia,MT-B}). Moreover, we can quantify the rate of equidistribution if the irrationality exponent $\kappa$ of $y$ is finite. From \cite{KN} we obtain a power savings:
\bea
|J_\ell(a,b)| &\ = \ &  (b-a)N^\delta + O\left(N^{\delta(1-\frac{1}{\kappa}+\epsilon')}\right);
\eea see \cite{KonMi} for other examples of systems (such as the $3x+1$ map) where the irrationality exponent controls Benford behavior. The key idea is to keep approximating general sums with simpler ones that are tractable, with manageable errors at each step.

We now combine this quantified equidistribution with Lemma \ref{lem:diffxlxlplusone} and find (we divide by $2^N$ later, which converts these sums to probabilities)
\bea\label{eq:sumiinJab} \sum_{i \in J_\ell(a,b)} \ncr{N}{x_{\ell,i}} & \ = \ & \sum_{i \in J_\ell(a,b)} \left[\ncr{N}{x_\ell} + O\left({N\choose x_{\ell}} N^{-\frac{1}{2}+\delta+\epsilon}\right)\right] \nonumber\\ & = & \ncr{N}{x_\ell} \left[\left(\sum_{i \in J_\ell(a,b)} 1\right) + O\left( N^{-\frac{1}{2}+\delta+\epsilon} N^\delta\right)\right] \nonumber\\ & = & \ncr{N}{x_\ell} \left[\left( (b-a) N^{\delta} + O\left(N^{\delta(1-\frac{1}{\kappa}+\epsilon')}\right)\right) + O\left(N^{-\frac12+2\delta+\epsilon}\right)\right]\nonumber\\ & = & (b-a) N^\delta \ncr{N}{x_\ell} + N^\delta \ncr{N}{x_\ell} \cdot O\left(N^{-\frac12+\delta+\epsilon} + N^{-\delta(\frac{1}{\kappa}-\epsilon')}\right). \eea
Notice the error term above is a power smaller than the main term. If we show the sum over $\ell$ of the main term is $1+o(1)$, then the sum over $\ell$ of the error term is $o(1)$ and does not contribute in the limit (it will contribute $N^{-\eta}$ for some $\eta > 0$).

As $x_\ell = \frac{N}{2} + \ell N^\delta$, we use the multisection formula (see \eqref{eq:multisectionjmodq}) with $q = N^\delta$, and find \bea \sum_{\ell= -N/N^\delta}^{N/N^\delta} \ncr{N}{x_\ell} \ = \ \frac{2^N}{N^\delta}\left(1 + {\rm Err} \left[N^\delta \left( \cos \frac{\pi}{N^\delta}\right)^N\right] \right) \ = \ \frac{2^N}{N^\delta} + O\left(2^N \cdot e^{-3N^{1-2\delta}}\right),
\eea
where ${\rm Err}[X]$ indicates an absolute error of size at most $X$ and the simplification of the error comes from Taylor expanding the cosine and standard analysis: \bea \log \left(\cos(\pi/N^\delta)\right)^N & \ = \ & N \log \left(1 - \frac1{2!} \frac{\pi^2}{N^{2\delta}} + O(N^{-4\delta})\right) \nonumber\\ &=& N \left(-\frac1{2!} \frac{\pi^2}{N^{2\delta}} + O(N^{-4\delta})\right) \ = \ - \frac{\pi^2}{2} N^{1-2\delta} + O(N^{1-4\delta}) \nonumber\\ \cos(\pi/N^\delta) & \le & e^{-3N^{1-2\delta}} \eea for $N$ large.

Remember, though, that we are only supposed to sum over $|\ell| \le N^{\frac{1}{2}-\delta+\epsilon}$. The contribution from the larger $|\ell|$, however, was shown to be at most $O(N^{-2\epsilon})$ in \S\ref{truncation}, and thus we find
\bea \frac1{2^N}\sum_{\ell=-N^{\frac{1}{2}-\delta+\epsilon}}^{N^{\frac{1}{2}-\delta+\epsilon}} (b-a) N^\delta \ncr{N}{x_\ell}& \ = \ & 1 + O\left(N^\delta e^{-3N^{1-2\delta}} + N^{-2\epsilon}\right). \eea As this is of size 1, the lower order terms in \eqref{eq:sumiinJab} do not contribute to the main term (their contribution is smaller by a power of $N$).

We can now complete the proof of Theorem \ref{fixed proportion thm} when $y\not\in\Q$ has finite irrationality exponent. Convergence in distribution to Benford's law is equivalent to showing, for any $(a,b) \subset (0,1)$, that \be \sum_{\ell=-N/N^\delta}^{N/N^\delta} \sum_{i=0 \atop S_{10}(x_{\ell,i}) \in (a,b)}^{N_\delta-1} \frac{\ncr{N}{x_{\ell,i}}}{2^N} \ = \ b-a + o(1);\ee however, we just showed that. Furthermore, our analysis gives a power savings for the error, and thus we may replace the $o(1)$ with $N^{-\eta}$ for some computable $\eta > 0$ which is a function of $\epsilon, \epsilon'$ and $\delta$. This completes the proof of this case of Theorem \ref{fixed proportion thm}. \hfill $\Box$

\begin{rek} A more careful analysis allows one to optimize the error. We want $\delta\left(-\frac{1}{\kappa}+\epsilon'\right)$ $=$ $-\frac{1}{2} + \delta + \epsilon$, and thus we should take $\delta = (\frac{1}{2} - \epsilon)/(1+\frac{1}{\kappa} - \epsilon')$. Of course, if we are going to optimize the error we want a significantly better estimate for the probability in the tail. This is easily done by invoking the Central Limit Theorem instead of Chebyshev's Inequality. \end{rek}

%%%%%%%%%%%%%%%%%%%%%%%%%%%%%%%%%%%%%%%%%%%%%%%%%%%%%%%%%%%%%%%%%%%%%%%%%%
%					  INFINITE IRRATIONALITY EXPONENT
%%%%%%%%%%%%%%%%%%%%%%%%%%%%%%%%%%%%%%%%%%%%%%%%%%%%%%%%%%%%%%%%%%%%%%%%%%

\subsection{Case III: $y\notin\Q$ has infinite irrationality exponent} While almost all numbers have irrationality exponent at most 2, the argument in Case II does not cover all possible $y$ (for example, if $y$ is a Liouville number such as $\sum_n 10^{-n!}$). We can easily adapt our proof to cover this case, at the cost of losing a power savings in our error term. As $y$ is still irrational, we still have equidistribution for the logarithms of the segment lengths modulo 1; the difference is now we must replace $O\left(N^{\delta(1-\frac{1}{\kappa}+\epsilon')}\right)$ with $o(N^\delta)$. The rest of the argument proceeds identically, and we obtain in the end an error of $o(1)$ instead of $O(N^{-\eta})$.

%%%%%%%%%%%%%%%%%%%%%%%%%%%%%%%%%%%%%%%%%%%%%%%%%%%%%%%%%%%%%%%%%%%%%%%%%%%%%%%%%%%%%%%%%%%%%%%%%%%%%%%%%%%%%%%%%%%%%%%%%%%%%%%%%%%%%%%%%%%%%%%%%%%%%
%%					   TWO DIMENSIONAL FRAGMENTATION
%%%%%%%%%%%%%%%%%%%%%%%%%%%%%%%%%%%%%%%%%%%%%%%%%%%%%%%%%%%%%%%%%%%%%%%%%%%%%%%%%%%%%%%%%%%%%%%%%%%%%%%%%%%%%%%%%%%%%%%%%%%%%%%%%%%%%%%%%%%%%%%%%%%%%
%%%%%%%%%%%%%%%%%%%%%%%%%%%%%%%%%%%%%%%%%%%%%%%%%%%%%%%%%%%%%%%%%%%%%%%%%%%%%%%%%%%%%%%%%%%%%%%%%%%%%%%%%%%%%%%%%%%%%%%%%%%%%%%%%%%%%%%%%%%%%%%%%%%%%
%
\section{Two-Dimensional Fragmentation} \label{sec:2d}
In this section we prove Theorem \ref{thm:triangle} and Theorem \ref{thm:quadrilateral}, extending our results to two-dimensions.

%%%%%%%%%%%%%%%%%%%%%%%%%%%%%%%%%%%%%%%%%%%%%%%%%%%%%%%%%%%%%%
%%%%%%%%%%%%%%%%%%%%%%%%%%%%%%%%%%%%%%%%%%%%%%%%%%%%%%%%%%%%%%
%%%%%%%%%%%%%%%%%%%%%%%%%%%%%%%%%%%%%%%%%%%%%%%%%%%%%%%%%%%%%%
\subsection{Decomposition of Triangles}

For any two triangles there is an affine transformation of the plane that maps one to the other. Recall that affine transformations preserve ratios of areas in the plane. In particular, we may consider an arbitrary triangle as an affine transformation of an initial right triangle. It therefore makes sense to select a point in the interior of each sub-triangle $T$ according to the probability distribution obtained by mapping $f$ under the affine transformation $T_0 \to T$. For example, when $f$ is uniform, our selection is uniformly random for each sub-triangle in the decomposition.

\begin{proof}[Proof of Theorem \ref{thm:triangle}] Considering at each stage of the decomposition the proportions $p^1, p^2, p^3$ of the resultant sub-triangles to their parent, we obtain a sequence of random variables $\{ (p_i^1,p_i^2,p_i^3) \}_{i \in \mathbb{N}}$. Since affine transformations preserve ratios of areas, the probability density $g$ on $(0,1)^3$ of the proportions $p_1,p_2,p_3$ is constant with respect to the shape of the triangle and depends only on $f$. We may therefore reframe this two-dimensional process in terms of our one-dimensional rod decomposition, where at each step the rod is broken into three pieces with proportions $p^1,p^2,p^3$ according to the distribution $g$.

In the case of our initial right triangle with vertices $(0,0), (0,1), (1,0)$, if we select the point $(x,y)$ in the interior of our triangle, the formula $\text{area}=\text{base} \times \text{height}$ gives that the sub-triangles have areas $\frac{1}{2}x, \frac{1}{2}y, \frac{1}{2}(1-x-y)$. The proportions of the areas of the sub-triangles to the original triangle are therefore $x,y,1-x-y$ (in particular, if $f$ is uniform, then so is $g$.) Theorem \ref{thm:triangle} now follows directly from Theorem  \ref{thm:unrestricted} under an affine mapping. \end{proof}

%%%%%%%%%%%%%%%%%%%%%%%%%%%%%%%%%%%%%%%%%%%%%%%%%%%%%%%%%%%%%%
%%%%%%%%%%%%%%%%%%%%%%%%%%%%%%%%%%%%%%%%%%%%%%%%%%%%%%%%%%%%%%
%%%%%%%%%%%%%%%%%%%%%%%%%%%%%%%%%%%%%%%%%%%%%%%%%%%%%%%%%%%%%%
\subsection{Decomposition of Quadrilaterals}
We now consider our model for decomposing convex quadrilaterals.

\begin{defi}
Let
\be S^k\ :=\ \{ \mathbf{x} \in (0,1)^k: x_1 + \cdots + x_k = 1\}.\ee
\end{defi}

A key lemma is a variant of the unrestricted fragmentation process in which pieces are allowed to rejoin after breaks. \fix{DAVID/BLAINE: DEFINE WHAT THE TILDES ARE OVER $f$ AND $G$ BELOW.}
\reply{DAVID: I THINK THIS IS WHAT IS MEANT \be \tilde{f}(x)\ =\ \int_{y_1 = - \infty}^\infty \cdots \int_{y_{k-1} = - \infty}^\infty f(x,y_1,\ldots,y_{k-1}) d y_1 \cdots d y_{k-1}.\ee}
\begin{lem}\label{lem:rodjoining}
Fix a probability density $\pi$ on $(0,1)$, a probability density $f$ on $S^k$, and a one-parameter probability density $G(\theta)$ on $S^k$ depending on $\theta$ a single random variable such that \eqref{modmell} holds. Given a stick of length $L$, we perform the following iterated process.
\begin{enumerate}
\item[1.] Break the stick at a proportion $p$ chosen according to $\pi$.

\item[2.] Break one of the pieces into $k$ sub-pieces $X_1, \ldots, X_k$ with proportions chosen according to $f$.

\item[3.] Independently break the other piece into $k$ sub-pieces $Y_1, \ldots, Y_k$ with proportions according to $G(p)$.

\item[4.] Combine each piece $X_i$ with the piece $Y_i$, so that we have $k$ pieces of lengths $X_1 + Y_1, \ldots, X_k + Y_k$.

\item[5.] Repeat this process independently on each of the four resulting sub-pieces until there are $k^N$ pieces $\mathcal{X}$.
\end{enumerate}
If
\begin{equation} \label{modmell}
\lim_{N \to \infty} \sum_{\substack{ \ell = -\infty \\ \ell \neq 0}}^{\infty} \left( \int_0^\infty \int_0^1 \int_0^1 \int_0^1
	\pi(p) \tilde{f}(y) \tilde{G}(p;z) \tilde{f} \left( \frac{x-pz}{(1-p)y} \right) x^{-2 \pi i \ell / \log 10} d z d y d p d x \right)^N \ = \ 0,
\end{equation}
then as $N \to \infty$, the significands of the lengths in $\mathcal{X}$ converge in distribution to Benford's law.
\end{lem}

\begin{proof}[Proof of Lemma \ref{lem:rodjoining}]
For readability of the proof, we assume that our distributions $f,G(\theta)$ are symmetric; the general case follows similarly. Let $\varphi_s$ denote the significand indicator function, so that
\be\varphi_s(x)\ :=\ \begin{cases} 1 & \text{if the significand of $x$ is at most $s$}
	\\ 0 & \rm otherwise. \end{cases} \ee
Let $P_N(s)$ denote the proportion of pieces in $\mathcal{X}$ the significand of whose length is at most $s$. That is,
\be P_N(s)\ :=\ \frac{1}{|\mathcal{X}|} \sum_{X \in \mathcal{X}} \varphi_s(X).\ee

We now show that
\begin{equation} \label{exp}
\lim_{N \to \infty} \mathbb{E}[P_N(s)]\ =\ \log_{10}(s)
\end{equation}
and
\begin{equation} \label{var}
\lim_{N \to \infty} \var{P_N(s)} \ =\ 0,
\end{equation}
which together suffice to prove our theorem. The proof of \eqref{exp} proceeds as in the proof of Theorem \ref{thm:unrestricted}
except that we apply Theorem 1.1 of \cite{JKKKM} with $f_{D(\theta)} = 1/(\theta + p/4)$ as opposed to $f_{D(\theta)} = 1/\theta$. We therefore concentrate our attention on showing \eqref{var}.

We have
\begin{align}
\var{P_N(s)} &\ = \ \mathbb{E}[P_N^2(s)] - \mathbb{E}[P_N(s)]^2 \nonumber\\
& \ =\ \frac{1}{4^{2N}} \sum_{X,Y \in \mathcal{X}} \mathbb{E}[\varphi_s(X)\varphi_s(Y)]
	- \mathbb{E}[P_N(s)]^2 \nonumber\\
&\ =\ \frac{1}{4^N} E[P_N(s)]
	+ \frac{1}{4^{2N}} \sum_{X \neq Y \in \mathcal{X}}\mathbb{E}[\varphi_s(X)\varphi_s(Y)]
	- E[P_N(s)]^2 \nonumber\\
&\ =\ \frac{1}{4^{2N}} \sum_{X \neq Y \in \mathcal{X}} \mathbb{E}[\varphi_s(X)\varphi_s(Y)]
	- \log_{10}(s) + o(1).
\end{align}
Now, consider two arbitrary pieces $X \neq Y \in \mathcal{X}$. After re-labeling, we may express $X,Y$ as
\begin{align*}
X &\ =\ L \left( p_1\alpha_1 + (1-p_1) \beta_1 \right)
	\cdots \left(p_{k-1} \alpha_{k-1} + (1-p_{k-1})\beta_{k-1} \right)
	\left( p_k \alpha_k + (1-p_k) \beta_k \right) \nonumber\\
	& \qquad \left( p_{k+1} \alpha_{k+1} + (1-p_{k+1})\beta_{k+1} \right) \cdots
	\left( p_N \alpha_N + (1-p_N) \beta_N \right) \nonumber\\
&\ =:\ L a_1 \cdots a_{k-1} \left( p_k \alpha_k + (1-\beta_k)\beta_k \right) a_{k+1} \cdots a_N \nonumber\\
&\ =:\ L_1 a_{k+1} \cdots a_N
\end{align*}
\begin{align}
Y &\ =\ L \left( p_1\alpha_1 + (1-p_1) \beta_1 \right)
	\cdots \left(p_{k-1} \alpha_{k-1} + (1-p_{k-1})\beta_{k-1} \right)
	\left( p_k \alpha_k' + (1-p_k) \beta_k' \right) \nonumber\\
	& \qquad \left( p_{k+1} \tilde{\alpha}_{k+1} + (1-p_{k+1})\tilde{\beta}_{k+1} \right) \cdots
	\left( p_N \tilde{\alpha}_N + (1-p_N) \tilde{\beta}_N \right) \nonumber\\
&\ =:\ L a_1 \cdots a_{k-1} \left( p_k \alpha_k' + (1-p_k)\beta_k' + \frac{p}{4} \right) \tilde{a}_{k+1} \cdots \tilde{a}_N \nonumber\\
&\ =:\ L_2 \tilde{a}_{k+1} \cdots \tilde{a}_N
\end{align}
where $a_i,\tilde{a}_i$ are independent. By symmetry, we may take $\alpha_i, \tilde{\alpha}_i, \beta_i$, and $\tilde{\beta}_i$ to be the first coordinates of random variables chosen according to $f$ or $G(p_i)$. However, $\alpha_i'$ and $\beta_i'$ are not independent. The distribution of the $a_i,\tilde{a}_i$ has density
\be
h(x)\ =\ \int_0^1 \phi(p) \int_0^1 \tilde{f}(y) \int_0^1 \tilde{G}(p;z) \tilde{f}\left( \frac{x-pz}{(1-p)y} \right) d z d y d p,
\ee
where for any probability distribution $f$ in $k$ variables, we define a probability density $\tilde{f}$ by
\be \tilde{f}(x)\ =\ \int_{y_1 = - \infty}^\infty \cdots \int_{y_{k-1} = - \infty}^\infty f(x,y_1,\ldots,y_{k-1}) d y_1 \cdots d y_{k-1}.\ee
As in \S\ref{sec:unrestricted}, we write
\begin{eqnarray} I(L_1) &  \ = \ & \int_{a_{k+1} = 0}^1 \cdots \int_{a_N = 0}^1 \varphi_s\left( L_1 \prod_{i=k+1}^N a_i \right)
	\prod_{i=k+1}^N h(a_i) d a_{k+1} \cdots d a_N \nonumber\\
 I(J_2) &=& \int_{\tilde{a}_{k+1} = 0}^1 \cdots \int_{\tilde{a}_N = 0}^1
	\varphi_s\left( L_2 \prod_{i=k+1}^N \tilde{a}_i \right)
	\prod_{i=k+1}^N h(\tilde{a}_i) d \tilde{a}_{k+1} \cdots d \tilde{a}_N \end{eqnarray}
so that
\be
\mathbb{E}[\varphi_s(X)\varphi(Y)]\ =\  \int_{a_1 = 0}^1 \cdots \int_{a_{k-1}=0}^1 \int_{\alpha_k = 0}^1 \int_{\beta_k=0}^1 I(L_1) J(L_2) d a_1 \cdots d a_{k-1} d \alpha_k d \beta_k.
\ee
It suffices to show that, for any $L_1, L_2$, we have $| I(L_1) J(L_2) - (\log_{10} s)^2 | = o(1)$ in $N$. This is proven in \S\ref{sec:unrestricted} if $h$ satisfies the Mellin condition \eqref{modmell}. In other words if
\begin{equation}
\lim_{N \to \infty} \sum_{\substack{ \ell = -\infty \\ \ell \neq 0}}^{\infty} \left( \int_0^\infty \int_0^1 \int_0^1 \int_0^1
	\pi(p) \tilde{f}(y) \tilde{G}(p;z) \tilde{f} \left( \frac{x-pz}{(1-p)y} \right) x^{-2 \pi i \ell / \log 10} d z d y d p d x \right)^N\ =\ 0,
\end{equation}
which is precisely \eqref{modmell} on $\pi$,$f$,$G$.
\end{proof}

We can now proceed to the proof of Benford behavior in the quadrilateral decomposition model. \fix{DAVID / BLAINE: DO YOU THINK WE SHOULD GIVE AN EXAMPLE OF SUCH A MAP? MAYBE AS AN APPENDIX?}
\reply{DAVID: I THINK THAT THIS MAY BE OUTSIDE THE SCOPE OF WHAT WE WANT TO DO, THE MAPPINGS I HAVE SEEN AREN'T THAT SUCCINCTLY DESCRIBED}
\begin{proof}[Proof of Theorem \ref{thm:quadrilateral}]
Unlike in the triangular case, affine transformations do not map between arbitrary quadrilaterals, since affine transformations must preserve parallelism. However, there are numerous continuous mappings from arbitrary convex quadrilaterals to squares that preserve the ratio of areas; the construction of Gromov after Knothe provides such a mapping with other nice properties \cite{Gr}. The idea of the mapping is to send slices of the first quadrilateral to slices of the second in a continuous way; see \cite{BrMo} for details on the specific construction. Therefore, given a probability density $f$ on the square, we obtain a unique probability density $f'$ on any convex quadrilateral, and if $f$ is uniform then so is $f'$.

We now formulate our decomposition process in precise terms. We begin with the unit square, a probability density $f$ on $(0,1)$, and a probability density $g$ on $(0,1)^2$. We independently select a point on each side according to $f$. Call these $A,B,C,D$. We then choose a point $E$ in the interior of the quadrilateral $ABCD$ according to the composition of $g$ with a mapping that preserves ratios of areas. We now connect $E$ to each of $A,B,C,D$ in order to decompose the square into four convex quadrilaterals. We then perform the same decomposition independently on each of these quadrilaterals, repeating this process until we obtain $4^N$ quadrilaterals.

Again, we consider the ratios of the areas of the sub-quadrilaterals to their parent, and use the resulting distribution to reframe this two-dimensional process as a decomposition of the rod. Because of the existence of continuous mappings that preserve ratios of areas, it is enough to consider the decomposition of the unit square. Each sub-quadrilateral can be thought of as having two triangular components: one outside $ABCD$ and one inside it. We will determine the distribution of the proportions of the sub-quadrilaterals by conditioning on the area of the inner quadrilateral $ABCD$. In terms of the decomposition of the rod, this corresponds to the following case of Lemma \ref{lem:rodjoining}. We split the rod into two pieces, one corresponding to the inner quadrilateral $ABCD$ and one corresponding to the outer area. We then split the former piece according to the areas of the inner triangles, and the outer piece according to the areas of the outer triangles, and then recombine the corresponding sub-pieces.

If $(a,b,c,d)$ are the proportions of the points on each side, then the outer triangles have areas $\frac{1}{2}a(1-b)$, $\frac{1}{2}b(1-c)$, $\frac{1}{2}c(1-d)$, $\frac{1}{2}d(1-a)$ and the quadrilateral $ABCD$ has area $\mathcal{A}=1- \frac{1}{2} (a(1-b)+b(1-c)+c(1-d)+d(1-a))$. Let $\pi$ denote the density of the probability distribution of $\mathcal{A}$. Let $h_1(\mathcal{A})$ denote the density of the distribution of the areas of the outer triangles conditional on $\mathcal{A}$. Note that if $f$ is continuous, then so are $\pi$ and $h_1$. Let $h_2$ denote the density of the distribution of the ratios of the areas of the inner triangles to $\mathcal{A}$. Pulling back using any continuous area preserving mapping, we may determine the distribution of $h_2$ by considering the case when $ABCD$ is the unit square. In this case, by selecting the point $(x,y)$ we produce triangles of area $\frac{1}{2}xy, \frac{1}{2}x(1-y), \frac{1}{2}(1-x)y, \frac{1}{2}(1-x)(1-y)$. The areas of the inner triangles in $ABCD$ are obtained by multiplying through by $\mathcal{A}$. Therefore, if $f$ and $g$ are both continuous, we see that $h_2$ is continuous.

Theorem \ref{thm:quadrilateral} follows from applying Lemma \ref{lem:rodjoining} with this $\pi$ above corresponding to the $\pi$ of the lemma and $h_1$ and $h_2$ corresponding to $G$ and $f$, respectively.
\end{proof}

%%%%%%%%%%%%%%%%%%%%%%%%%%%%%%%%%%%%%%%%%%%%%%%%%%%%%%%%%%%%%%%%%%%%%%%%%%%%%%%%%%%%%%%%%%%%%%%%%%%%%%%%%%%%%%%%%%%%%%%%%%%%%%%%%%%%%%%%%%%%%%%%%%%%
%%%%%%%%%%%%%%%%%%%%%%%%%%%%%%%%%%%%%%%%%%%%%%%%%%%%%%%%%%%%%%%%%%%%%%%%%%%%%%%%%%%%%%%%%%%%%%%%%%%%%%%%%%%%%%%%%%%%%%%%%%%%%%%%%%%%%%%%%%%%%%%%%%%%
%					   PROOF OF DETERMINANT EXPANSION
%%%%%%%%%%%%%%%%%%%%%%%%%%%%%%%%%%%%%%%%%%%%%%%%%%%%%%%%%%%%%%%%%%%%%%%%%%%%%%%%%%%%%%%%%%%%%%%%%%%%%%%%%%%%%%%%%%%%%%%%%%%%%%%%%%%%%%%%%%%%%%%%%%%%
%%%%%%%%%%%%%%%%%%%%%%%%%%%%%%%%%%%%%%%%%%%%%%%%%%%%%%%%%%%%%%%%%%%%%%%%%%%%%%%%%%%%%%%%%%%%%%%%%%%%%%%%%%%%%%%%%%%%%%%%%%%%%%%%%%%%%%%%%%%%%%%%%%%%

\section{Proof of Theorem \ref{thm:detpiecesbenford}: Determinant Expansion} \label{sec:determinant}

The techniques introduced to prove that the continuous stick decomposition processes result in Benford behavior can be applied to a variety of dependent systems. To show this, we prove that the $n!$ terms of a matrix determinant expansion follow Benford's Law in the limit as long as the matrix entries are independent, identically distributed nice random variables. As the proof is similar to our previous results, we content ourselves with sketching the arguments, highlighting where the differences occur. See \cite{MaMil} for additional examples of Benford behavior in matrix ensembles.

Consider an $n \times n$ matrix $A$ with independent identically distributed entries $a_{pq}$ drawn from a continuous real valued density $f(x)$. Without loss of generality, we may assume that all entries of $A$ are non-negative. For $1 \leq i \leq n!$, let $X_{i,n}$ be the $i$\textsuperscript{\rm th} term in the determinant expansion of $A$. Thus $X_{i,n}=\prod_{p=1}^n a_{p\sigma_i(p)}$ where the $\sigma_i$'s are the $n!$ permutation functions on $\{1, \dots, n\}$.

We prove that the distribution of the significands of the sequence $\{X_{i,n}\}_{p = 1}^{n!}$ converges in distribution to Benford's Law when the entries of $A$ are drawn from a distribution $f$ that satisfies \eqref{eq:summeltransformsnozero} (with $h=f$). Recall that it suffices to show
\begin{enumerate}
\item $\lim\limits_{N \to \infty} \E[P_n(s)] \ = \ \log_{10}(s)$, and
\item $ \lim\limits_{N \to \infty} \var{P_n(s)} \ = \ 0$.
\end{enumerate}

We first quantify the degree of dependencies, and then sketch the proofs of the mean and variance. Fix $i \in \{1, \ldots, n!\}$ and consider the number of terms $X_{j,n}$ that share exactly $k$ entries of $A$ with $X_{i,n}$. Equivalently: If we permute the numbers $1, 2, \dots, n$, how likely is it that exactly $k$ are returned to their original starting position? This is the well known probl\'eme des rencontres (the special case of $k=0$ is counting derangements), and in the limit the probability distribution converges to that of a Poisson random variable with parameter 1 (and thus the mean and variance tend to 1; see \cite{HSW}). Therefore, if $K_{i,j}$ denotes the number of terms $X_{i,n}$ and $X_{j,n}$ share, the probability that $K_{i,j} > \log N$ is $o(1)$.

The determination of the mean follows as before. By linearity of expectation we have
\bea
\E[P_n(s)] &\ = \ &  \frac{1}{n!}\sum\limits_{i=1}^{n!} \E[\varphi_s(X_{i,n})].
\eea
It suffices to show that $\lim\limits_{n\to\infty}\E[\varphi_s(X_{i,n})]\ = \ \log_{10}s$. We have
\bea
\E[\varphi_s(X_{i,n})] &\ = \ &  \int_{a_{1\sigma_i(1)}}\int_{a_{2\sigma_i(2)}}\dotsm\int_{a_{n\sigma_i(n)}} \varphi_s\left(\prod_{p=1}^n a_{p\sigma_i(p)}\right)\nonumber\\
&&\cdot\prod_{p=1}^n f(a_{p\sigma_i(p)}) \ da_{1\sigma_i(1)} da_{2\sigma_i(2)}\dotsm da_{n\sigma_i(n)},
\eea
where $a_{p\sigma_i(p)}$ are the entries of $A$. As these random variables are independent and $f(x)$ satisfies \eqref{eq:summeltransformsnozero}, the convergence to Benford follows from \cite{JKKKM} (or see Appendix A of \cite{B--}). %\ref{sec:convprodbenfJKKKM}

To complete the proof of convergence to Benford, we need to control the variance of $P_n(s)$. Arguing as before gives
\bea
\var{P_n(s)} &\ = \ &  \frac{\log_{10}(s)}{n!} + \frac{1}{(n!)^2} \left(\sum\limits_{\substack{i, j =1 \\ i \neq j}}^{n!}\E [\varphi_s(X_{i,n}) \varphi_s(X_{j,n})] \right) - \log_{10}^2 s+o(1).
\eea
We then mimic the proof from \S\ref{sec:varianceunrestricted}. There we used that, for a fixed $i$, the number of the $2^N$ pairs $(i,j)$ with $n$ factors not in common was $2^{n-1}$; in our case we use $K_{i,j}$ is approximately Poisson distributed to show that, with probability $1+o(1)$ there are at least $\log N$ different factors. The rest of the proof proceeds similarly.

%%%%%%%%%%%%%%%%%%%%%%%%%%%%%%%%%%%%%%%%%%%%%%%%%%%%%%%%%%%%%%%%%%%%%%%%%%%%%%%%%%%%%%%%%%%%%%%%%%%%%%%%%%%%%%%%%%%%%%%%%%%%%%%%%%%%%%%%%%%%%%%%%%%%
%%%%%%%%%%%%%%%%%%%%%%%%%%%%%%%%%%%%%%%%%%%%%%%%%%%%%%%%%%%%%%%%%%%%%%%%%%%%%%%%%%%%%%%%%%%%%%%%%%%%%%%%%%%%%%%%%%%%%%%%%%%%%%%%%%%%%%%%%%%%%%%%%%%%
%					   CONJECTURES SECTION
%%%%%%%%%%%%%%%%%%%%%%%%%%%%%%%%%%%%%%%%%%%%%%%%%%%%%%%%%%%%%%%%%%%%%%%%%%%%%%%%%%%%%%%%%%%%%%%%%%%%%%%%%%%%%%%%%%%%%%%%%%%%%%%%%%%%%%%%%%%%%%%%%%%%
%%%%%%%%%%%%%%%%%%%%%%%%%%%%%%%%%%%%%%%%%%%%%%%%%%%%%%%%%%%%%%%%%%%%%%%%%%%%%%%%%%%%%%%%%%%%%%%%%%%%%%%%%%%%%%%%%%%%%%%%%%%%%%%%%%%%%%%%%%%%%%%%%%%%

\section{Future Work} \label{sec:conjectures}

Many of our results concern continuous decomposition models in which a stick is broken at a proportion $p$. We propose several variations of a discrete decomposition model in which a stick breaks into pieces of integer length, which we hope to return to in a future paper.

Consider the following additive decomposition process. Begin with a stick of length $L$ and uniformly at random choose an integer length $c \in [1, L-1]$ at which to cut the stick. This results in two pieces, one of length $c$ and one of length $L - c$. Continue this cutting process on both sticks, and only stop decomposing a stick if its length corresponds to a term in a given sequence $\{a_n\}$. As one cannot decompose a stick of length one into two integer pieces, sticks of length one stop decomposing as well.

\begin{conj}\label{conj:Benford}
The stick lengths that result from this decomposition process follow Benford's Law for many choices of $\{a_n\}$. In particular, if either (i) $\{a_n\} = \{2n\}$ or (ii) $\{a_n\}$ is the set of all prime numbers then the resulting stick lengths are Benford.
\end{conj}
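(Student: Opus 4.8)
We outline a possible line of attack on Conjecture \ref{conj:Benford}. The plan is to convert the additive cuts into approximately multiplicative proportions and then reuse, essentially verbatim, the machinery developed for the Unrestricted and Restricted Decomposition Models (Theorems \ref{thm:unrestricted} and \ref{restricted thm}). Track any terminal stick back to the root: if along its line of descent the $i$\textsuperscript{th} ancestral stick has length $\ell_i$ and is cut at the uniform integer $c_i \in [1,\ell_i-1]$, then the piece is multiplied by $p_i := c_i/\ell_i$ or by $1-c_i/\ell_i$, and its final length is $L\prod_i p_i$. The first step is the elementary observation that for $\ell_i$ large the law of $p_i$ is within $O(1/\ell_i)$ (in, say, Kolmogorov distance) of the uniform distribution on $(0,1)$, so that the terminal length looks like a product of nearly independent, nearly uniform proportions --- exactly the situation handled by \cite{JKKKM} (recalled in Appendix \ref{sec:convprodbenfJKKKM}). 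The main task is therefore to show (a) that along almost every line of descent the number of cuts grows with $L$ while the ancestral lengths $\ell_i$ stay large for all but a bounded number of the cuts, and (b) that the accumulated discretization error is negligible.

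For step (a) one would fix a threshold $T = T(L) \to \infty$ with $T = o(\log L)$, discard every terminal piece whose line of descent uses fewer than $T$ cuts or which is absorbed while its length still exceeds $L/T$, and prove that the discarded pieces form an $o(1)$ fraction of all terminal pieces. Granting this, apply the quantitative product-to-Benford estimate of \cite{JKKKM} to the $\ge T$ proportions of each retained piece: since each proportion differs from a uniform one by $O(T/L) = o(1)$, the Mellin-transform error is that of a genuine product of at least $T$ uniforms plus a vanishing correction, giving $\E[\varphi_s(X_i)] \to \log_{10} s$ and hence $\E[P_N(s)] \to \log_{10} s$ after averaging. The variance is then controlled exactly as in \S\ref{sec:varianceunrestricted} and \S\ref{sec:restricted}: partition the pairs $(X_i,X_j)$ of terminal pieces by the number of ancestral cuts they share (equivalently by the depth of their most recent common ancestor in the fragmentation tree), bound the number of pairs sharing more than $T$ cuts, and note that for the remaining pairs the two log-lengths are sums over disjoint blocks of near-uniform proportions, so each cross term is $\log_{10}^2 s + o(1)$; summing yields $\var{P_N(s)} \to 0$, and convergence in distribution to Benford follows as before.

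The genuinely problem-specific input --- and the step I expect to be the main obstacle --- is controlling the shape of the fragmentation tree, which is precisely where the arithmetic of $\{a_n\}$ enters. For $\{a_n\} = \{2n\}$ the situation collapses pleasantly: a stick of odd length splits into one even piece, which terminates at once, and one odd piece, which is uniform on the odd numbers below it and continues unless it equals $1$. Thus the process degenerates to a single chain of length $\sim \log L$ shedding one terminal even piece per step, a discrete avatar of the Restricted Decomposition Model, and the reduction above goes through with $T$ a small power of $\log L$. For $\{a_n\}$ the primes the tree is honestly binary: a composite stick of length $\ell$ is absorbed at its next split only if one child is prime, which by the Prime Number Theorem occurs with probability of order $1/\log \ell$, so sticks essentially never terminate while still large, and almost every line of descent survives $\sim \log L$ generations before its length drops to $O(1)$.

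The delicate point for the primes --- and the real work --- is the bottom of the tree, where a positive proportion of the terminal pieces are small primes (and $1$'s). One must show that, counted with the frequencies the process assigns them, these small-prime leaves nonetheless carry a Benford-distributed significand; this reduces to an equidistribution statement for $\log_{10} p \bmod 1$ over primes $p$ weighted by the natural harmonic-type weights coming from the recursion, which should follow from the Prime Number Theorem with a classical error term but must be made uniform over the whole tree. A final, softer obstacle explains the qualifier ``many choices'': for some sequences (e.g.\ $\{a_n\} = \{10^n\}$, or an arithmetic progression whose modulus interacts badly with powers of $10$) the logarithms of the admissible terminal lengths are supported on translates of a proper subgroup of the unit interval, which destroys Benfordness; so before the argument can be pushed through in general one must isolate the right hypotheses on $\{a_n\}$ --- presumably a positive-density (non-lacunarity) condition together with a $\log$-equidistribution condition on $\{a_n\}$ itself --- analogous to the Mellin-transform condition \eqref{eq:summeltransformsnozero} in the continuous theorems.
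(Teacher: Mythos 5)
The statement you have been asked to prove is labeled a \emph{conjecture} in the paper: the authors provide only numerical evidence (the $\chi^2$ histograms in \S\ref{sec:conjectures}) and explicitly defer a proof to future work. There is therefore no paper proof to compare against, and your proposal must be judged on its own.

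Your outline --- convert the additive cut $c_i \in \{1,\ldots,\ell_i-1\}$ into the multiplicative proportion $p_i = c_i/\ell_i$, argue that $p_i$ is approximately uniform when $\ell_i$ is large, then reuse the JKKKM/Mellin machinery and the shared-ancestor bookkeeping from \S\ref{sec:varianceunrestricted} to control the variance --- is the natural first line of attack, and your structural observations are correct: for $\{a_n\} = \{2n\}$ the tree degenerates to a single chain with one even leaf shed per step (a genuine discrete avatar of the Restricted model), and for $\{a_n\}$ the primes the tree is essentially full binary by the PNT. Your remark that the correct hypothesis on $\{a_n\}$ should combine positive density with a $\log$-equidistribution condition is also consistent with the paper's Conjecture \ref{conj:notBenford} about sparse stopping sequences.

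There are, however, two substantive gaps. First, the passage from ``the law of $p_i$ is within $O(1/\ell_i)$ of uniform in Kolmogorov distance'' to ``the JKKKM Mellin bounds apply up to a vanishing correction'' is not automatic. The Mellin transform $\mathcal{M}_{p_i}(1 - 2\pi i k/\log 10)$ of a discrete uniform measure on $O(\ell_i)$ points is a Riemann sum for an oscillatory integrand whose total variation on $(0,1)$ is infinite (the phase $\log x$ oscillates without bound near $0$), so the naive Riemann-sum error estimate fails; moreover the error must be controlled \emph{uniformly in the Mellin frequency $k$}, and for $|k|$ comparable to the grid spacing $\ell_i$ the discrete Mellin transform can have magnitude close to $1$. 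One would need to show directly that these transforms stay bounded away from $1$ in the relevant range of $k$, or replace the Mellin argument with an exponential-sum estimate tailored to the discrete support; the Kolmogorov distance alone does not deliver this.

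Second, and more seriously for case (ii): for the prime stopping sequence the bottom of the tree is not a correction to be ``made uniform'' --- it is almost certainly where the bulk of the leaves live. By length conservation the total leaf length is $L$, while the PNT heuristic you invoke says the branching process is supercritical (each composite node has two composite children with probability $1 - O(1/\log\ell)$) until piece sizes drop to $O(1)$; consequently the number of leaves absorbed at bounded size should be of order $L$ up to logarithmic factors, swamping the $O(L^{1-\epsilon})$ leaves absorbed while still large. In that regime the strategy of discarding the small leaves and applying JKKKM to the rest cannot work, because you would be discarding a $1 - o(1)$ fraction of the sample. The Benford behavior observed numerically must therefore emerge from the distribution over small primes themselves, weighted by the frequencies the branching process assigns to them --- a genuinely different problem from the continuous theorems, and one for which the tools of this paper give no direct leverage. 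You are right to flag this as ``the real work,'' but it is better described as the entire problem rather than an endgame cleanup.
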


More generally one can investigate processes where the stick lengths continue decomposing if they are in certain residue classes to a fixed modulus, or instead of stopping at the primes one could stop at a sequence with a similar density, such as $\lfloor n \log n \rfloor$. It is an interesting question to see, for a regularly spaced sequence, the relationship between the density of the sequence and the fit to Benford's law. While one must of course be very careful with any numerical exploration, as many processes are almost Benford (see for example the results on exponent random variables in \cite{LSE,MN2}), Theorem \ref{restricted thm} is essentially a continuous analogue of this problem when $a_n = 2n$, and thus provides compelling support.

Figure \ref{fig:stopOnEvens}(a) shows a histogram of chi-square values from numerical simulations with a $\chi^2_8$ distribution overlaid (as we have 9 first digits, there are 8 degrees of freedom). A stick of length approximately $10^4$ was decomposed according to the above cutting process, where $\{a_n\} = 2n$. A chi-square value\footnote{If $X_i$ are the simulation frequencies of each digit $i = 1, 2, \ldots, 9$ and $Y_i$ are the frequencies of each digit as predicted by Benford's Law, where $i = 1, \ldots, 9$, then the chi-squared value we calculated is given by $\sum_{i=1}^9 (X_i N - Y_i N)^2 / N Y_i$, where $N$ is the number of pieces.} was calculated by comparing the frequencies of each digit obtained from the simulation to the frequencies predicted by Benford's Law. If the simulated data is a good fit to Benford's Law, the $\chi^2$ values should follow a $\chi^2$ distribution. Examining the plot below shows that our numerical simulations support Conjecture \ref{conj:Benford}(i). Similarly, Figure \ref{fig:stopOnEvens}(b), which shows the chi-square values obtained when $\{a_n\}$ is the set of all primes,  supports Conjecture \ref{conj:Benford}(ii).

\begin{figure}[h]
\begin{center}
\scalebox{.3}{\includegraphics{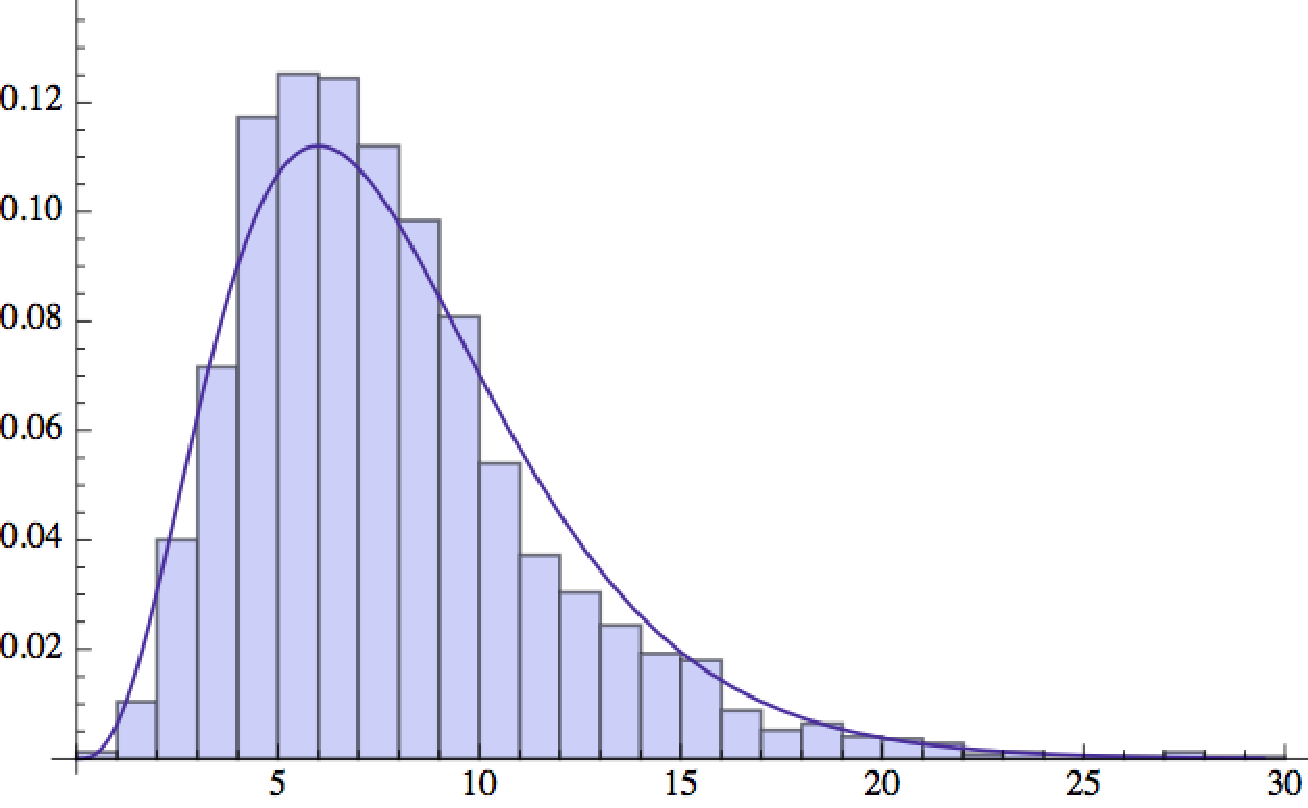}}\ \ \ \scalebox{.3}{\includegraphics{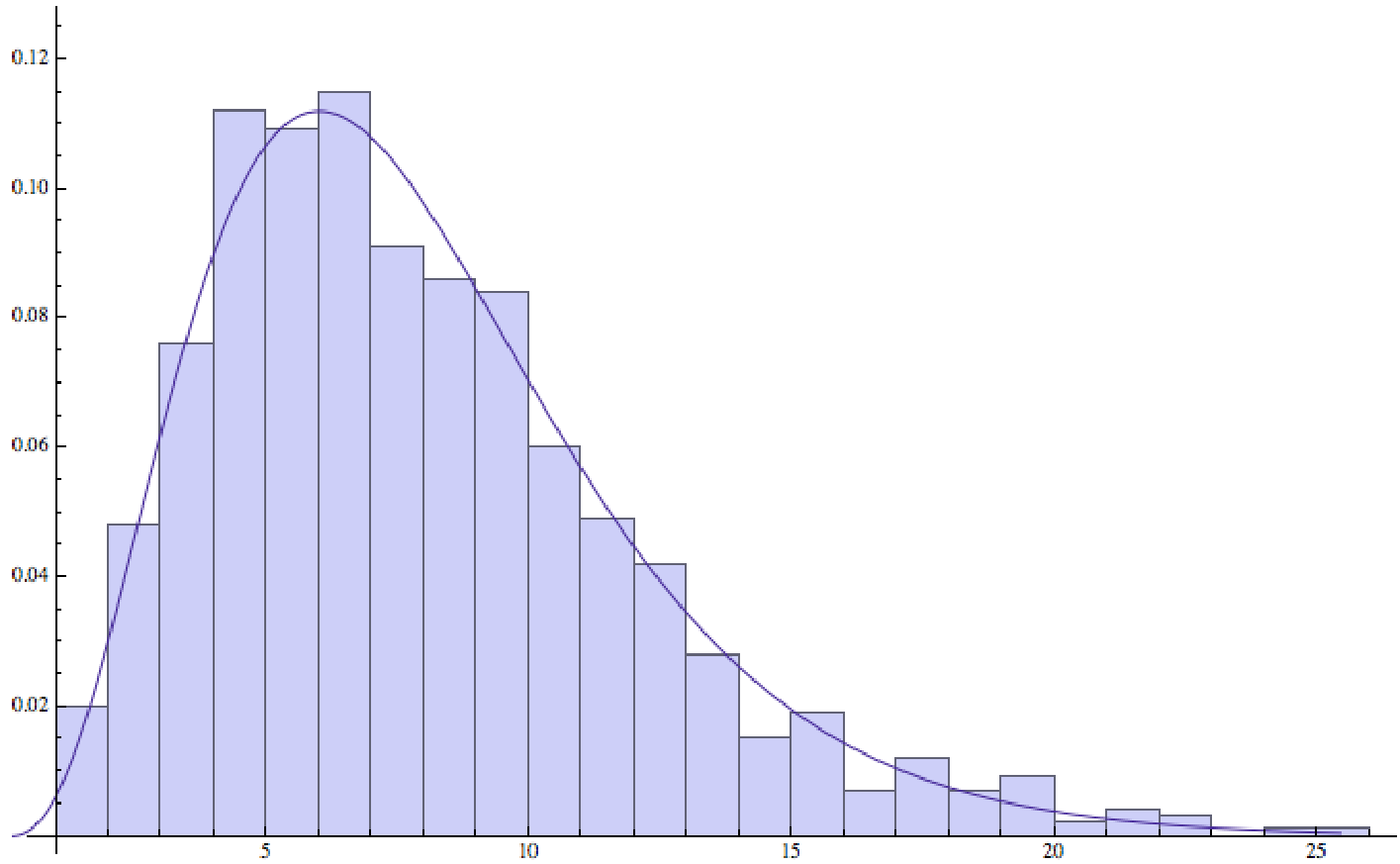}}
\caption{\label{fig:stopOnEvens} (a) $\{a_n = 2n\}$: 2500 chi-square values of stick lengths to Benford behavior after decomposition of sticks of length approximately $10^{7}$. (b) $\{a_n = p_n\}$ (the set of primes): 1000 chi-square values of stick lengths to Benford behavior after decomposition of sticks of length approximately $10^{500}$.}
\end{center}\end{figure}

\begin{conj} \label{conj:notBenford} Fix a monotonically increasing sequence $\{a_n\}$. Consider a decomposition process where a stick whose length is in the sequence does not decompose further. The process is not Benford if $\{a_n\}$ is any of the following: $\{n^2\}$, $\{2^n\}$, $\{F_n\}$ where $F_n$ is the $n$\textsuperscript{{\rm th}} Fibonacci number. \end{conj}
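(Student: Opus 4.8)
The three sequences of Conjecture~\ref{conj:notBenford} all satisfy $\sum_n 1/a_n < \infty$ (geometric for $\{2^n\}$ and $\{F_n\}$, comparable to $\zeta(2)$ for $\{n^2\}$), unlike $\{2n\}$ and the primes of Conjecture~\ref{conj:Benford} (for which $\sum 1/a_n$ diverges; the continuous analogue of the $a_n=2n$ case is the Benford Theorem~\ref{restricted thm}), and it is precisely this summability that should be shown to destroy Benford behavior. Every final piece has length in $\{a_n\}\cup\{1\}$, so the empirical distribution of leading digits is a convex combination of point masses at the leading digits of the $a_n$. For a stick of length $L$, let $\pi_L(m)$ be the expected fraction of the original length $L$ that ends up inside pieces of size exactly $a_m$ (indexing the forced stopping length $1$ as $a_1$). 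Since every unit of length terminates in some final piece, $\sum_m \pi_L(m) = 1$, and the expected number of pieces of size $a_m$ is $\E[N_L(m)] = L\,\pi_L(m)/a_m$. The plan is to show $\pi_L(m)$ stabilizes as $L\to\infty$, read off the limiting leading-digit distribution, and check that it is not logarithmic.

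\textbf{The renewal recursion.} Conditioning on the first cut $c$, uniform in $\{1,\dots,\ell-1\}$, a uniformly placed unit of a non-stopping stick of length $\ell$ lies in the left piece with probability $c/\ell$ and in the right with probability $(\ell-c)/\ell$, and is then uniform within that piece, so
\be
\pi_\ell(m) \ = \ \frac{2}{\ell(\ell-1)}\sum_{c=1}^{\ell-1} c\,\pi_c(m) \qquad \bigl(\ell \notin \{a_n\}\cup\{1\}\bigr),
\ee
with boundary values $\pi_\ell(m) = \mathbbm{1}_{\ell = a_m}$ for the stopping lengths $\ell \in \{a_n\}\cup\{1\}$. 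Writing $S_\ell(m) := \sum_{c<\ell} c\,\pi_c(m)$, the recursion becomes $S_{\ell+1}(m) = \frac{\ell+1}{\ell-1}\,S_\ell(m)$ at non-stopping steps and $S_{\ell+1}(m) = S_\ell(m) + a_m\,\mathbbm{1}_{\ell=a_m}$ at stopping steps. Over a run of $g$ consecutive non-stopping integers near $\ell$ the multiplicative factor telescopes to approximately $(1+g/\ell)^2$; multiplying these along the sequence yields $S_\ell(m)\asymp \ell^2$ for each fixed $m$, whence $\pi_L(m) = 2S_L(m)/(L(L-1))$ (for non-stopping $L$) is bounded above and bounded away from $0$ uniformly in large $L$, and in fact $\pi_L(m)\to\pi_\infty(m)$ for a computable $\pi_\infty(m)\in(0,1)$ with $\sum_m\pi_\infty(m)=1$. (For $\{2^n\}$ and $\{F_n\}$ the gaps grow geometrically and the per-gap factor tends to $\approx 4$ and $\approx\phi^2$; for $\{n^2\}$ the factor over $[k^2,(k+1)^2]$ is $\approx(1+1/k)^4$, whose partial products still give the $\ell^2$ growth.)

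\textbf{From the recursion to the limiting law.} A variance computation of the type carried out in \S\ref{sec:varianceunrestricted}, bounding $\E[N_L(m)N_L(m')]$ by the same first-cut conditioning, shows that $\sum_m N_L(m)$ and each $N_L(m)$ concentrate around their means, so the empirical frequency of leading digit $d$ converges to
\be
P_\infty(d) \ = \ \frac{\sum_{m \,:\, a_m \text{ has leading digit } d} \pi_\infty(m)/a_m}{\sum_{m} \pi_\infty(m)/a_m}.
\ee
Since $\pi_\infty(m)\le 1$ and $\sum_m 1/a_m<\infty$, the denominator is at least $\pi_\infty(1)/a_1>0$, while the numerator is at most $\sum_{m \,:\, a_m \text{ has leading digit } d} 1/a_m$. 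Choosing a digit $d$ absent from the leading digits of all small $a_m$ --- for $\{2^n\}$ and $\{F_n\}$ take $d=9$, first occurring at $2^{53}$ and at $F_{59}$; for $\{n^2\}$ take $d=5$ or $d=7$, first occurring at $23^2$ and $27^2$ (here one estimates the restricted sum over $m$ with $m^2$ having leading digit $d$, not the crude tail $\sum_{m\ge m_d}1/m^2$) --- forces $P_\infty(d)$ far below the Benford value $\log_{10}((d+1)/d)$. Hence the limiting leading-digit law is not logarithmic, proving the conjecture.

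\textbf{Main obstacle.} The crux is upgrading the heuristic $S_\ell(m)\asymp\ell^2$ to rigorous two-sided bounds, and ideally to the convergence $\pi_L(m)\to\pi_\infty(m)$, with the \emph{lower} bound --- that a positive fraction of the stick ends as unit pieces, $\liminf_L\pi_L(1)>0$ --- being the delicate point, together with the variance bound needed for the concentration step. The irregular spacing of the $a_n$ (and, for $\{n^2\}$, the slowly drifting growth factor $1+4/k$ over successive square-intervals) makes the error control nontrivial; a natural route is to compare $S_\ell(m)$ with a smooth solution of the continuum version of the recursion and estimate the defect.
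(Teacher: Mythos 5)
This is a conjecture in the paper, not a theorem: the paper offers only numerical evidence (the chi-square plots of Figure~\ref{fig:stopOnSquares}) and an informal heuristic about sparsity (``we do not expect Benford behavior if the sequence is too sparse,'' and ``far too high a probability of being of length~$1$''). So there is no paper proof to compare against; you are attempting to fill a gap the authors leave open, and your review should be judged on its own merits.

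Your renewal recursion is the right object, and the telescoping computation is essentially correct. Writing $S_\ell(m)=\sum_{c<\ell}c\,\pi_c(m)$, the non-stopping factor $\frac{\ell+1}{\ell-1}$ telescopes, and each stopping length $a_{k'}\neq a_m$ deletes a factor $\frac{a_{k'}+1}{a_{k'}-1}$, so one gets (after observing $S_{a_m}(m)=0$, hence $S_{a_m+1}(m)=a_m$) the closed form $\pi_\infty(m)=\frac{2}{a_m+1}\prod_{a_{k'}>a_m}\frac{a_{k'}-1}{a_{k'}+1}$, which is strictly positive exactly when $\sum 1/a_n<\infty$. This is a clean way to make the paper's density heuristic precise, and it also explains on the same footing why $\{2n\}$ and the primes (where the sum diverges, forcing $\pi_\infty(m)=0$ for every $m$) are plausibly Benford. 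The dominated-convergence step for $\sum_m\pi_\infty(m)=1$ is easy from $\pi_L(m)\le 2/(a_m+1)$, which is summable; you should say this rather than assert it.

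The genuine gap is exactly the one you flag: concentration. The quantity you actually need to control is the ratio $N_L^{(d)}/\!\sum_m N_L(m)$ where \emph{both} numerator and denominator are random, and unlike the continuous models of \S\ref{sec:unrestricted} the decomposition tree here has random depth and random branching structure, so the counting argument ``$2^{n-1}$ cousins at distance $n$'' does not transfer verbatim. A second-moment bound on $N_L(m)$ via first-cut conditioning is plausible but must actually be carried out; until then you have only $\E[P_L(d)]\to P_\infty(d)\neq\log_{10}\frac{d+1}{d}$, which is not yet non-convergence of $P_L(d)$. On the final step, two smaller remarks. First, your Fibonacci fact is wrong: leading digit $9$ first occurs at $F_{16}=987$, not $F_{59}$; the argument still goes through since the weights $\pi_\infty(m)/a_m\asymp 1/a_m^2$ decay so fast that the numerator for $d=9$ is still dwarfed by the denominator, but you should fix the claim. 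Second, it is cleaner and more robust to observe instead that $P_\infty(1)$ is bounded well above $\log_{10}2$ (because $\pi_\infty(1)/a_1$ alone dominates the denominator when $\sum 1/a_n^2$ is very small), which is precisely the phenomenon the paper's numerics exhibit.
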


More generally, we do not expect Benford behavior if the sequence is too sparse, which is the case for polynomial growth in $n$ (when the exponent exceeds 1) or, even worse, geometric growth.

Figure \ref{fig:stopOnSquares} features plots of the observed digit frequencies vs the Benford probabilities. These plots also give the total number of fragmented pieces, the number of pieces whose lengths belong to the stopping sequence, and the chi-squared value. Notice that the stopping sequences in Conjecture \ref{conj:notBenford} result in stick lengths with far too high a probability of being of length $1$. This fact leads us to believe that the sequences are not ``dense'' enough to result in Benford behavior.

In addition to proofs of the conjectures, future work could include further exploration of the relationship between stopping sequence density and Benford behavior.

\begin{figure}[h]
\begin{center}
\scalebox{.3}{\includegraphics{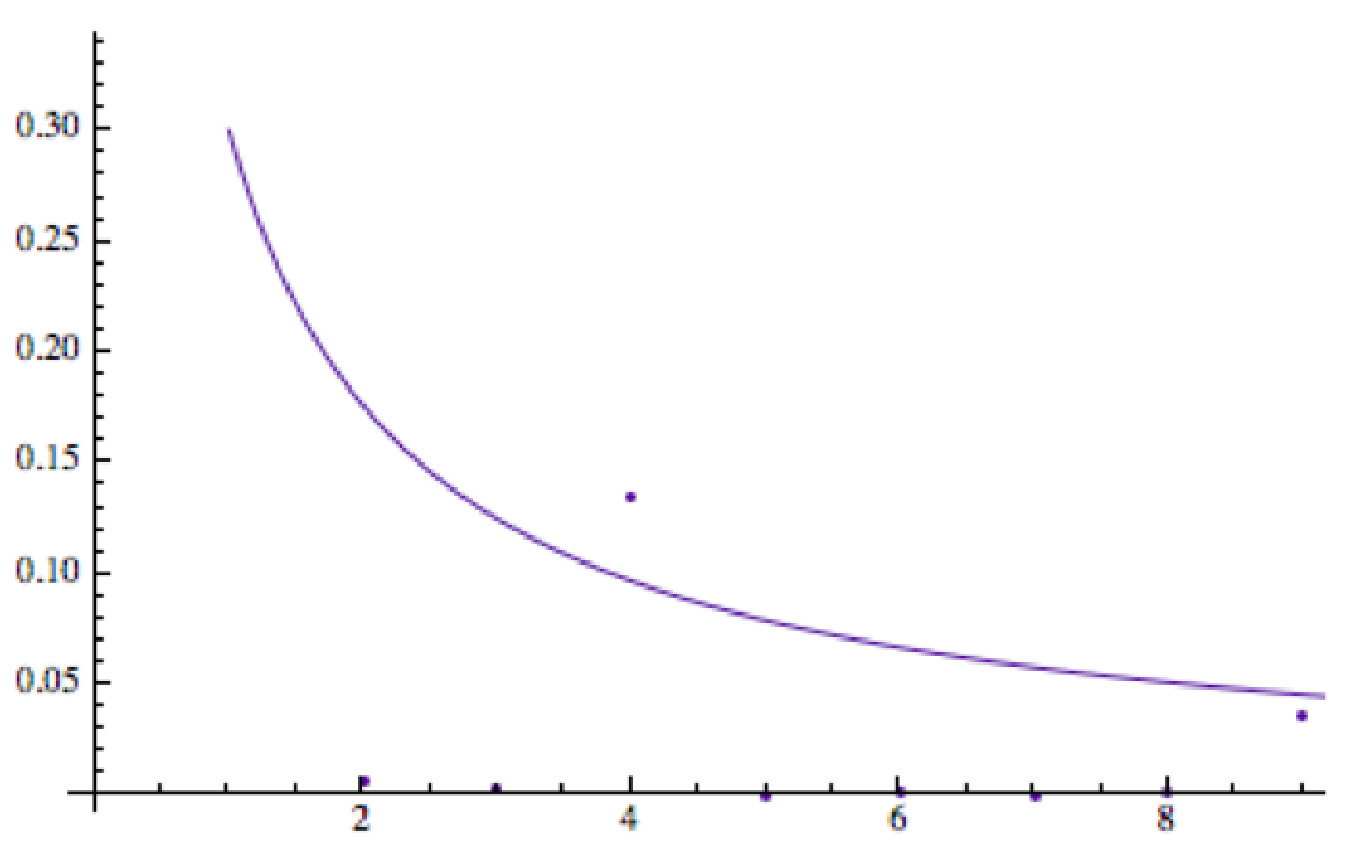}} \ \ \ \scalebox{.3}{\includegraphics{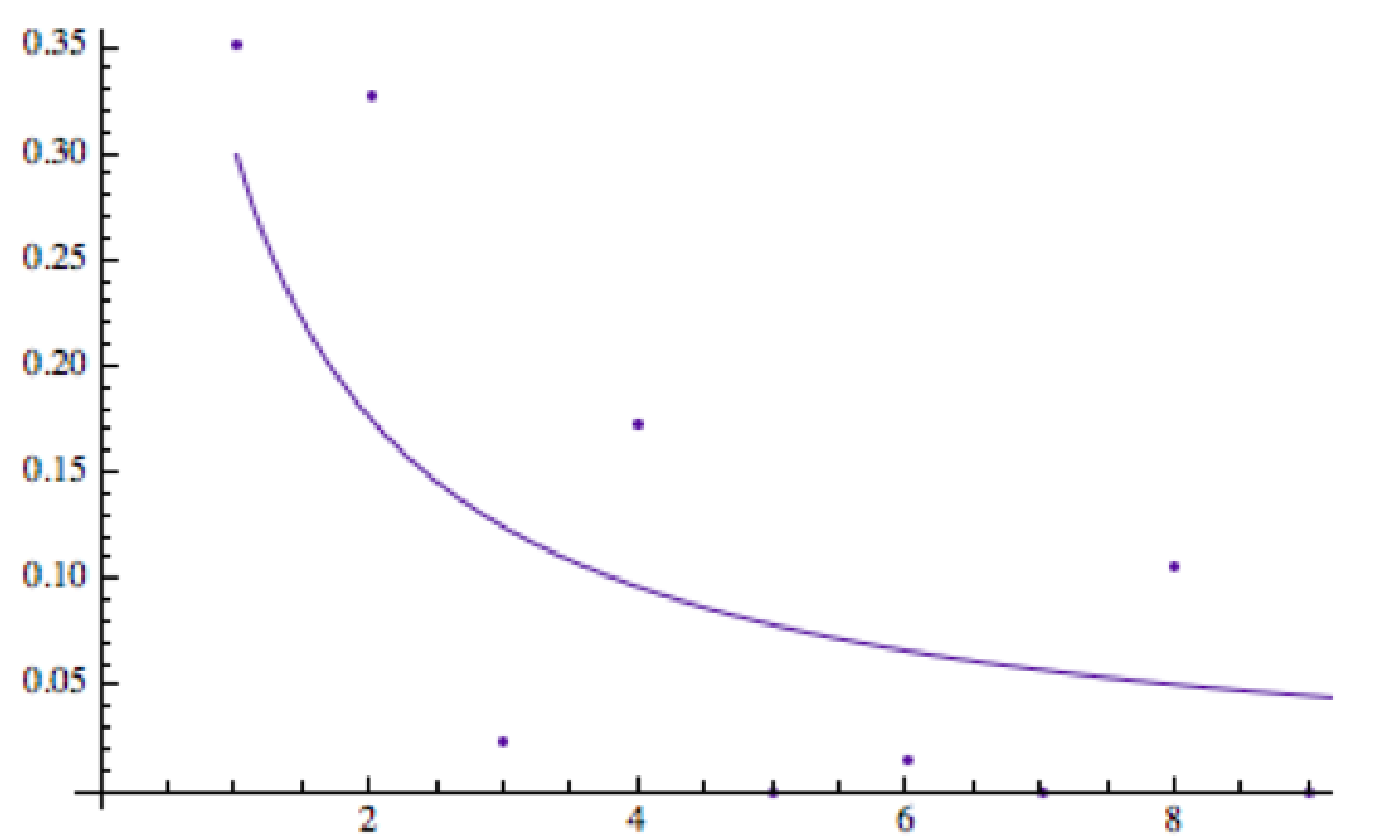}} \ \ \ \scalebox{.3}{\includegraphics{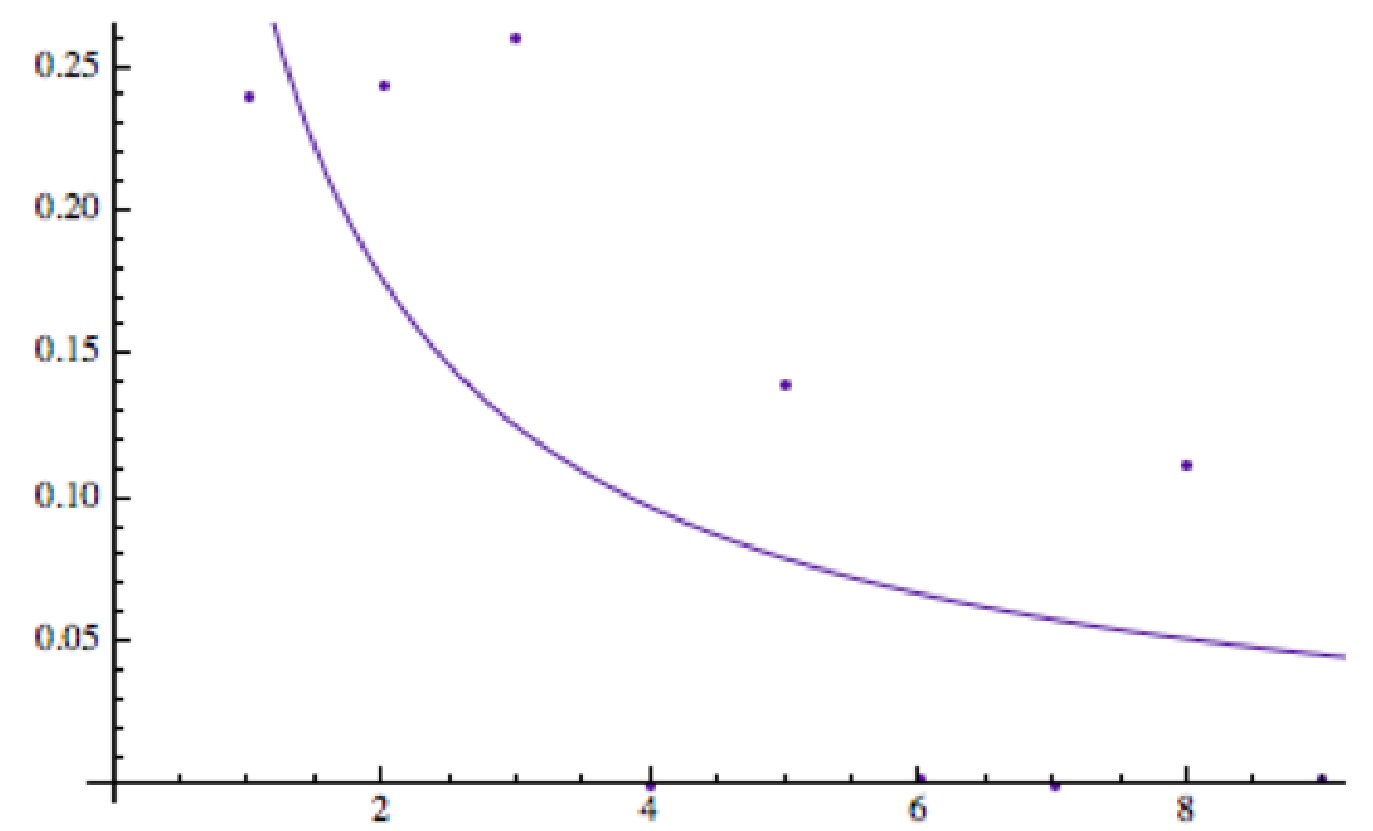}}
\caption{\label{fig:stopOnSquares} (a) $\{a_n = n^2\}$: Distribution of stick lengths after decomposing sticks of starting length approximately $10^{6}$. (b) $\{a_n = 2^n\}$: Distribution of stick lengths after decomposing 1000 sticks of starting length approximately $10^{500}$. (c) $\{a_n = F_n\}$, the Fibonacci Numbers: Distribution of stick lengths after decomposing 500 sticks of starting length approximately $10^{500}$.}
\end{center}\end{figure}

%%%%%%%%%%%%%%%%%%%%%%%%%%%%%%%%%%%%%%%%%%%%%%%%%%%%%%%%%%%%%%%%%%%%%%%%%%%%%%%%%%%%%%%%%%%%%%%%%%%%%%%%%%%%%%%%%%%%%%%%%%%%%%%%%%%%%%%%%%%%%%%%%%%%
%%%%%%%%%%%%%%%%%%%%%%%%%%%%%%%%%%%%%%%%%%%%%%%%%%%%%%%%%%%%%%%%%%%%%%%%%%%%%%%%%%%%%%%%%%%%%%%%%%%%%%%%%%%%%%%%%%%%%%%%%%%%%%%%%%%%%%%%%%%%%%%%%%%%
%					  APPENDIX
%%%%%%%%%%%%%%%%%%%%%%%%%%%%%%%%%%%%%%%%%%%%%%%%%%%%%%%%%%%%%%%%%%%%%%%%%%%%%%%%%%%%%%%%%%%%%%%%%%%%%%%%%%%%%%%%%%%%%%%%%%%%%%%%%%%%%%%%%%%%%%%%%%%%
%%%%%%%%%%%%%%%%%%%%%%%%%%%%%%%%%%%%%%%%%%%%%%%%%%%%%%%%%%%%%%%%%%%%%%%%%%%%%%%%%%%%%%%%%%%%%%%%%%%%%%%%%%%%%%%%%%%%%%%%%%%%%%%%%%%%%%%%%%%%%%%%%%%%

\appendix

%%%%%%%%%%%%%%%%%%%%%%%%%%%%%%%%%%%%%%%%%%%%%%%%%%%%%%%%%%%%%%%%%%%%%%%%%%%%%%%%%%%%%%%%%%%%%%%%%%%%%%%%%%%%%%%%%%%%%%%%%%%%%%%%%%%%%%%%%%
%%%%%%%%%%%%%%%%%%%%%%%%%%%%%%%%%%%%%%%%%%%%%%%%%%%%%%%%%%%%%%%%%%%%%%%%%%%%%%%%%%%%%%%%%%%%%%%%%%%%%%%%%%%%%%%%%%%%%%%%%%%%%%%%%%%%%%%%%%%%%%%%%%%%
%%%%%%%%%%%%%%%%%%%%%%%%%%%%%%%%%%%%%%%%%%%%%%%%%%%%%%%%%%%%%%%%%%%%%%%%%%%%%%%%%%%%%%%%%%%%%%%%%%%%%%%%%%%%%%%%%%%%%%%%%%%%%%%%%%%%%%%%%%%%%%%%%%%%

\section{Non-Benford Behavior}\label{benfordnotbenford}

We consider the following generalization of the Unrestricted Decomposition Model of Theorem \ref{thm:unrestricted}, where now the proportions at level $n$ are drawn from a random variable with density $\phi_n$. If there are only finitely many densities and if the Mellin transform condition is satisfied, then the sequence of stick lengths converges to Benford; if there are infinitely many possibilities, however, then it is possible to obtain non-Benford behavior.

Specifically, we give an example of a sequence of distributions $\mathcal{D}$ with the following property: If all cut proportions are drawn from any one distribution in $\mathcal{D}$ then the resulting stick lengths converge to Benford's Law, but if all cut proportions in the $n$\textsuperscript{\rm th} level are chosen from the $n$\textsuperscript{\rm th} distribution in $\mathcal{D}$, the stick lengths do not exhibit Benford behavior.

It is technically easier to work with the densities of the logarithms of the cut proportions. Fix $\delta>0$  and choose a sequence of $\epsilon_n$'s so that they monotonically decrease to zero and \be \epsilon_n\ <\ \min \left(\sqrt{\frac{3}{20\pi^2(n+1)^2}}, \frac12 \frac{\delta}{2^n}, \frac{\log \frac12}2, \frac{1-\log \frac12}{2}\right);\ee these values and the meaning of these constants will be made clear during the construction. Consider the sequence of distributions with the following densities
\bea
\twocase{\phi_n(x) \ = \ }{\frac{1}{2 \epsilon_n}}{if $\left|x - \log\frac{1}{2}\right| < \epsilon_n$}{0}{otherwise;}
\eea these will be the densities of the logarithms of the cut proportions.

If the logarithm of the cut proportion is drawn solely from the distribution $\phi_k$ for a fixed $k$, then as the number of iterations of
this cut process tends to infinity the resulting stick lengths converge to Benford's Law. This follows immediately from Theorem \ref{thm:unrestricted} as the associated densities of the cut proportions satisfy the Mellin condition \eqref{eq:summeltransformsnozero}.

We show that if the logarithms of the cut proportions for the $n$\textsuperscript{th} iteration of the decomposition process is drawn from
$\phi_n$ then the resulting stick lengths do not converge to Benford behavior for certain $\delta$. What we do is first show that the distribution of one stick piece (say the resulting piece from always choosing the left cut at each stage) is non-Benford, and then we prove that the ratio of the lengths of any two final pieces is approximately 1. The latter claim is reasonable as our cut proportions are becoming tightly concentrated around 1/2; if they were all exactly 1/2 then all final pieces would be the same length.

Instead of studying the Mellin transforms of the cut proportions we can study the Fourier coefficients of the logarithms of the proportions (again, this is not surprising as the two transforms are related by a logarithmic change of variables). For a function $\phi$ on $[0,1]$, its $n$\textsuperscript{th} Fourier coefficient is \be \widehat{\phi}(n) \ := \ \int_0^1 \phi(x) e^{-2\pi i n x} dx.\ee Miller and Nigrini \cite{MN1} proved that if $R_1, \dots, R_M$ are continuous independent random variables with $g_m$ the density of $\log_{10} S_{10}(|R_m|)$, then the product $R_1 \cdots R_M$ converges to Benford's law if and only if for each non-zero $n$ we have $\lim_{M\to 0} \widehat{g_1}(n) \cdots \widehat{g_M}(0) = 0$.

We show that if we take our densities to be the $\phi_n$'s that the limit of the product of the Fourier coefficients at 1 does not tend to 0. We have
\bea
\widehat{\phi_{n}}(1) \ = \   \frac{1}{2 \epsilon_n}
\int_{\frac{1}{2} - \epsilon_{n}}^{\frac{1}{2}
+ \epsilon_{n}} e^{-2 \pi i x} dx \ = \ -1+\frac{2\pi^2\epsilon_{n}^2}{3} +O(\epsilon^4_{n}).
\eea
As we are assuming $\epsilon_n < \sqrt{\frac{3}{20\pi^2(n+1)^2}}$, we find for $n$ large that \be \left|\widehat{\phi_n}(1)\right| \ \ge \ \frac{n^2 + 2n}{(n+1)^2}. \ee As \be \lim_{N \to \infty} \prod_{n=1}^N \frac{n^2+2n}{(n+1)^2} \  = \ \frac12, \ee we see that \be \lim_{N \to\infty} \prod_{n=1}^N \left|\widehat{\phi_1}(1) \cdots \widehat{\phi_N}(1)\right| \ > \ 0. \ee

This argument shows that the product of $N$ random variables, where the logarithm of the $n$\textsuperscript{th} variable is drawn from the distribution $\phi_{n}$, is not Benford. This product is analogous to the length of one of the $2^N$ sticks that are created after $N$ iterations of our cut process. To show the entire collection of stick lengths does not follow a Benford distribution, we argue that all of the lengths are virtually identical because the cutting proportion tends to $1/2$ (specifically, that the ratio of any two lengths is approximately 1).

Let us denote the lengths of the sticks left after the $N$\textsuperscript{th} iteration of our cutting procedure by $X_{N,i}$, where $i = 1, \ldots, 2^N$. Each length is a product of $N$ random variables; the $n$\textsuperscript{th} term in the product is either $p_n$ (or $1-p_n$), where the logarithm of $p_n$ is drawn from the distribution $\phi_n$. Proving the ratio of any two lengths is approximately 1 is the same as showing that $\log (X_{N_i}/X_{N,j})$ is approximately 0. If we can show the largest ratio has a logarithm close to 0 then we are done. The largest (and similarly smallest) ratio comes when $X_{N,i}$ and $X_{N,j}$ have no terms in common, as then we can choose one to have the largest possible cut at each stage and the other the smallest possible cut. Thus $X_{N,i}$ always has the largest possible cut; as the largest logarithm at the $n$\textsuperscript{th} stage is $\epsilon_n + \log \frac12$, its proportion at the $n$\textsuperscript{th} level is $e^{\epsilon_n + \log\frac12}$. Similarly $X_{N,j}$ always has the smallest cut, which at the $n$\textsuperscript{th} level is $e^{-\epsilon_n + \log \frac12}$. Therefore \be \log\frac{X_{N,i}}{X_{N,j}} \ = \ \log \prod_{n=1}^N \frac{e^{\epsilon_n + \log\frac12}}{e^{-\epsilon_n + \log\frac12}} \ = \ \sum_{n=1}^N 2\epsilon_n; \ee as we chose $\epsilon_n < \frac12 \frac{\delta}{2^n}$, the maximum ratio between two pieces is at most $\delta$. By choosing $\delta$ sufficiently small we can ensure that all the pieces have approximately the same significands (for example, if $\delta < 10^{-2014}$ then we cannot get all possible first digits).

%%%%%%%%%%%%%%%%%%%%%%%%%%%%%%%%%%%%%%%%%%%%%%%%%%%%%%%%%%%%%%%%%%%%%%%%%%%%%%%%%%%%%%%%%%%%%%%%%%%%%%%%%%%%%%%%%%%%%%%%%%%%%%%%%%%%%%%%%%%%%%%%%%%%
%%%%%%%%%%%%%%%%%%%%%%%%%%%%%%%%%%%%%%%%%%%%%%%%%%%%%%%%%%%%%%%%%%%%%%%%%%%%%%%%%%%%%%%%%%%%%%%%%%%%%%%%%%%%%%%%%%%%%%%%%%%%%%%%%%%%%%%%%%%%%%%%%%%%
%%%%%%%%%%%%%%%%%%%%%%%%%%%%%%%%%%%%%%%%%%%%%%%%%%%%%%%%%%%%%%%%%%%%%%%%%%%%%%%%%%%%%%%%%%%%%%%%%%%%%%%%%%%%%%%%%%%%%%%%%%%%%%%%%%%%%%%%%%%%%%%%%%%%
%%%%%%%%%%%%%%%%%%%%%%%%%%%%%%%%%%%%%%%%%%%%%%%%%%%%%%%%%%%%%%%%%%%%%%%%%%%%%%%%%%%%%%%%%%%%%%%%%%%%%%%%%%%%%%%%%%%%%%%%%%%%%%%%%%%%%%%%%%%%%%%%%%%%
\section{Notes on Lemons' Work}\label{sec:noteslemons}

In his 1986 paper, \emph{On the Number of Things and the Distribution of First Digits}, Lemons \cite{Lem} models a particle decomposition process and offers it as evidence for the prevalence of Benford behavior, arguing that many sets that exhibit Benford behavior are merely the breaking down of some conserved quantity.  However, Lemons is not completely mathematically rigorous in his analysis of the model (which he states in the paper), and glosses over several important technical points. We briefly mention some issues, such as concerns about the constituent pieces in the model as well as how the initial piece decomposes. We discuss our resolutions of these issues as well as their impacts on the behavior of the system.

The first issue in Lemons' model concerns the constituent pieces. He assumes the set of possible piece sizes is bounded above and below and is drawn from a finite set, eventually specializing to the case where the sizes are in a simple arithmetic progression (corresponding to a uniform spacing), and then taking a limit to assume the pieces are drawn from a continuous range. In this paper, we allow our piece lengths to be drawn continuously from intervals at the outset, and not just in the limit. This removes some, but by no means all, of the technical complications. One must always be careful in replacing discrete systems with continuous ones, especially as there can be number-theoretic restrictions on which discrete systems have a solution. Modeling any conserved quantity is already quite hard with the restriction that the sum of all parts must total to the original starting amount; if the pieces are forced to be integers then certain number theoretic issues arise. For example, imagine our pieces are of length 2, 4 or 6, so we are trying to solve $2x_1 + 4x_2 + 6x_3 = n$. There are no solutions if $n = 2017$, but there are 85,345 if $n$ is 2018. By considering a continuous system from the start, we avoid these Diophantine complications. We hope to return to the corresponding discrete model in a sequel paper.

%% f[n_] := Sum[Sum[1, {x2, 0, (n/2 - 3 x3)/2}], {x3, 0, n/6}]

A second issue missing from Lemons argument is how the conserved quantity, the number of pieces, and the piece sizes should be related.  The continuous limit thus requires consideration of three quantities.  Without further specification of their relative scaling, the power law distribution (which leads to Benford behavior) is but one possible outcome.  Statistical models of the fragmentation of a conserved quantity based on integer partitioning have been constructed \cite{ChaMek,LeeMek,Mek}.  These models can lead to a power law distribution but only for special weightings for the different partitions.  Whether this distribution can be obtained from equally weighted partitions (as used in Lemons argument) is an important question, to which we hope to return.

A related issue is that it is unclear how the initial piece breaks up. The process is not described explicitly, and it is unclear how likely some pieces are relative to others. Finally, while he advances heuristics to determine the means of various quantities, there is no analysis of variances or correlations. This means that, though it may seem unlikely, the averaged behavior could be close to Benford while most partitions would be far from Benford.\footnote{Imagine we toss a coin one million times, always getting either all heads or all tails. Let's say these two outcomes are equally likely. If we were to perform this process trillions of times, the total number of heads and tails would be close to each other; however, no individual experiment would be close to 50\%.}

These are important issues, and their resolution and model choice impacts the behavior of the system. We mention a result from Miller-Nigrini \cite{MN2}, where they prove that while order statistics are close to Benford's Law (base 10), they do not converge in general to Benford behavior. In particular, this means that if we choose $N$ points randomly on a stick of length $L$ and use those points to partition our rod, the distribution of the resulting piece sizes \emph{will not} be Benford. Motivated by this result and Lemons' paper, we instead considered a model where at stage $N$ we have $2^N$ sticks of varying lengths, and each stick is broken into two smaller sticks by making a cut on it at some proportion. Each cut proportion is chosen from the unit interval according to a density $f$. Dependencies clearly exist within this system as the lengths of final sticks must sum to the length of the starting stick.

%%%%%%%%%%%%%%%%%%%%%%%%%%%%%%%%%%%%%%%%%%%%%%%%%%%%%%%%%%%%%%%%%%%%%%%%%%%%%%%%%%%%%%%%%%%%%%%%%%%%%%%%%%%%%%%%%%%%%%%%%%%%%%%%%%%%%%%%%%%%%%%%%%%%
%%%%%%%%%%%%%%%%%%%%%%%%%%%%%%%%%%%%%%%%%%%%%%%%%%%%%%%%%%%%%%%%%%%%%%%%%%%%%%%%%%%%%%%%%%%%%%%%%%%%%%%%%%%%%%%%%%%%%%%%%%%%%%%%%%%%%%%%%%%%%%%%%%%%
%					  BIBLIOGRAPHY
%%%%%%%%%%%%%%%%%%%%%%%%%%%%%%%%%%%%%%%%%%%%%%%%%%%%%%%%%%%%%%%%%%%%%%%%%%%%%%%%%%%%%%%%%%%%%%%%%%%%%%%%%%%%%%%%%%%%%%%%%%%%%%%%%%%%%%%%%%%%%%%%%%%%
%%%%%%%%%%%%%%%%%%%%%%%%%%%%%%%%%%%%%%%%%%%%%%%%%%%%%%%%%%%%%%%%%%%%%%%%%%%%%%%%%%%%%%%%%%%%%%%%%%%%%%%%%%%%%%%%%%%%%%%%%%%%%%%%%%%%%%%%%%%%%%%%%%%%

\ \\

\end{document}